\newtheorem{Prop}{Proposition}[section]
\newtheorem{Thm}[Prop]{Theorem}
\newtheorem{Lem}[Prop]{Lemma}
\newtheorem{Rem}[Prop]{Remark}
\theoremstyle{definition}
\newtheorem{Def}[Prop]{Definition}
\newcommand{\R}{{\mathbb R}}
\newcommand{\Z}{{\mathbb Z}}
\newcommand{\GL}{\mathrm{GL}}
\newcommand{\SO}{\mathrm{SO}}
\newcommand{\OO}{\mathrm{O}}
\newcommand{\RH}{\R \mathrm{H}}
\newcommand{\id}{\mathrm{id}}
\newcommand{\Aut}{\mathrm{Aut}}
\newcommand{\rnum}[1]{\expandafter{\romannumeral #1}}
\newcommand{\trans}{{{}^t\!}}
\newcommand{\R}{{\mathbb R}}
\newcommand{\Z}{{\mathbb Z}}
\newcommand{\LG}{{\mathfrak g}}
\newcommand{\LH}{{\mathfrak h}}
\newcommand{\Oo}{\mathcal{O}}
\newcommand{\Dd}{\mathcal{D}}
\newcommand{\LLM}{{\mathfrak M}}
\newcommand{\LLU}{{\mathfrak U}}
\newcommand{\llu}{{\mathfrak u}}
\newcommand{\Ric}{\mathrm{Ric}}
\newcommand{\Der}{\mathrm{Der}}
\newcommand{\Aut}{\mathrm{Aut}}
\newcommand{\id}{\mathrm{id}}
\newcommand{\GL}{\mathrm{GL}}
\newcommand{\SO}{\mathrm{SO}}
\newcommand{\OO}{\mathrm{O}}
\newcommand{\s}{\mathrm{span}}
\newcommand{\codim}{\mathrm{codim}}
\newcommand{\gl}{\mathfrak{gl}}
\newcommand{\oo}{\mathfrak{o}}
\newcommand{\RH}{\R \mathrm{H}}
\newcommand{\RAutg}{\R^\times \Aut(\LG)}
\def\hoge<#1>{\langle #1 \rangle} 
\title[A classification of left-invariant Lorentzian metrics]{A classification of left-invariant Lorentzian metrics on some nilpotent Lie groups} 
\author{Yuji Kondo} 
\address[Y.~Kondo]{Department of Mathematics, Hiroshima University, 
Higashi-Hiroshima, Japan 739-8526} 
\email{yuji-kondo@hiroshima-u.ac.jp}
\author{Hiroshi Tamaru}
\address[H.~Tamaru]{Department of Mathematics, Osaka City University, Osaka, Japan 558-8585} 
\email{tamaru@sci.osaka-cu.ac.jp}
\thanks{This work was partly supported by Osaka City University Advanced Mathematical Institute (MEXT Joint Usage/Research Center on Mathematics and Theoretical Physics). The second author was supported by JSPS KAKENHI Grant Number JP19K21831.} 
\date{}
\subjclass[2010]{53C30, 53C50}
\keywords{left-invariant metrics on Lie groups, Lorentzian metrics, Heisenberg group, parabolic subgroups, pseudo-Riemannian symmetric spaces.} 
\begin{document} 

\maketitle

\begin{abstract} 
It has been known that there exist exactly three left-invariant Lorentzian metrics up to scaling and automorphisms on the three dimensional Heisenberg group. In this paper, we classify left-invariant Lorentzian metrics on the direct product of three dimensional Heisenberg group and the Euclidean space of dimension $n-3$ with $n \geq 4$, and prove that there exist exactly six such metrics on this Lie group up to scaling and automorphisms. Moreover we show that only one of them is flat, and the other five metrics are Ricci solitons but not Einstein. We also characterize this flat metric as the unique closed orbit, where the equivalence class of each left-invariant metric can be identified with an orbit of a certain group action on some symmetric space.
\end{abstract}


\section{Introduction}
\label{sec1} 

Left-invariant metrics on Lie groups, both in Riemannian and pseudo-Riemannian cases, have been studied actively. Among others, classifications of left-invariant metrics are fundamental and interesting themes. For example, Milnor classified left-invariant Riemannian metrics on three dimensional unimodular Lie groups by using orthonormal bases of Lie algebras in \cite{Milnor}, which are now called the {\it Milnor frames}. Note that the Milnor frames play fundamental roles in studying Ricci soliton metrics (cf. \cite{Topping}). In general, if we can classify left-invariant metrics on a given Lie group, then it would be helpful to determine the existence and nonexistence of distinguished metrics, such as Einstein or Ricci soliton, which is one of the central problems.

In the Riemannian case, Lauret (\cite{Lauret}) classified Lie groups admitting only one left-invariant Riemannian metric up to scaling and isometry. Such a Lie group is isomorphic to, if it is connected and simply-connected, one of 
\begin{align*}
\R^n, \quad G_{\RH^n} \ (n \geq 2), \quad H_3 \times \R^{n-3} \ (n \geq 3),
\end{align*}
where $G_{\RH^n}$ is so-called the Lie group of the real hyperbolic space $\RH^n$ (the solvable part of the Iwasawa decomposition of the identity component $\SO^0(n, 1)$ of $\SO(n, 1)$ and acts simply-transitively on $\RH^n$), and $H_3$ is the three dimensional Heisenberg group. For other studies on classifications of left-invariant Riemannian metrics on Lie groups, 
we refer to \cite{HT, HTT, KTT}. Especially, in \cite{HTT}, a kind of theorem to classify left-invariant Riemannian metrics on Lie groups is formulated, which is called a {\it Milnor-type theorem}. In \cite{HT}, Milnor-type theorems have been obtained for left-invariant Riemannian metrics on all three dimensional solvable Lie groups. However, even in the Riemannian case, the present status is far from the completion. For example, left-invariant Ricci soliton metrics on solvable Lie groups have been classified only for dimension $\leq 6$ (\cite{Lauret2, Will}).


We are interested in classifications of left-invariant pseudo-Riemannian metrics on Lie groups. Left-invariant Lorentzian metrics on three dimensional Lie groups have been studied in \cite{CP, Rahmani, RR}.
For higher dimensional cases, it would be natural to start with the above three Lie groups, that is $\R^n$, $G_{\RH^n}$, and $H_3 \times \R^{n-3}$. For each signature, it is obvious that $\R^n$ admits only one left-invariant pseudo-Riemannian metric up to scaling and isometry, which is flat. For each non-Riemannian signature on $G_{\RH^n}$ $(n \geq 2)$, it admits exactly three left-invariant pseudo-Riemannian metrics up to scaling and isometry, all of which have constant sectional curvatures (\cite{KOTT}). For the case of $H_3$, it admits exactly three left-invariant Lorentzian metrics (\cite{Rahmani}), only one of which is flat and the other two are Ricci solitons but not Einstein (\cite{Nomizu, Onda, Onda1, RR}). However, the case of $H_3 \times \R^{n-3}$ with $n \geq 4$ is unsolved. 

In this paper, we give a classification of left-invariant Lorentzian metrics on $H_3 \times \R^{n-3}$ with $n \geq 4$, up to scaling and automorphisms. Recall that this criterion of classification is defined as follows. 

\begin{Def}
\label{def : up to scaling and auto}
Let $g_1$ and $g_2$ be left-invariant pseudo-Riemannian metrics on a Lie group $G$. Then, $(G, g_1)$ and $(G, g_2)$ are said to be {\it equivalent up to scaling and automorphisms} if there exist $c>0$ and a Lie group automorphism $\varphi: G \to G$ such that for any $p \in G$ and $x, y \in T_pG$, it satisfies
\begin{align*}
g_1(x, y)_p=cg_2(d\varphi_p(x), d\varphi_p(y))_{\varphi(p)},
\end{align*}
where $T_pG$ is the tangent space to $p$ of $G$, and $d\varphi_p$ is the differential map of $\varphi$ at $p$.
\end{Def}

If $(G, g_1)$ and $(G, g_2)$ are equivalent up to scaling and automorphisms, then they are isometric up to scaling. Note that the converse is not necessarily true. The first main result of this paper classifies left-invariant Lorentzian metrics on $H_3 \times \R^{n-3}$ up to scaling and automorphisms. We have to note that this does not give a classification up to scaling and isometry (see Remark~\ref{rem : isometric up to scaling on Lie group}).

\begin{Thm}
\label{thm : classification of inner products}
There exist exactly six left-invariant Lorentzian metrics on $H_3 \times \R^{n-3}$ $(n \geq 4)$ up to scaling and automorphisms.
\end{Thm}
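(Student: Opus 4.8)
The plan is to reduce the classification to a problem about orbits of a group action on a space of inner products, following the Milnor-type / moduli-space philosophy described in the introduction. Write $\LG = \LH_3 \oplus \R^{n-3}$ for the Lie algebra, fix a reference basis adapted to the grading (a vector $Z$ spanning the center-direction coming from $[\,\cdot\,,\cdot\,]$, a symplectic pair $X,Y$ with $[X,Y]=Z$, and an abelian complement $W_1,\dots,W_{n-3}$), and fix the reference inner product $\langle\cdot,\cdot\rangle_0$ making this basis orthonormal. Every left-invariant Lorentzian metric corresponds to a Lorentzian inner product on $\LG$, i.e. an element of $\GL(n,\R)/\OO(1,n-1)$, and two such metrics are equivalent up to scaling and automorphisms exactly when the corresponding inner products lie in the same orbit of $\R^{\times}\Aut(\LG)$ acting on that symmetric space. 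So the theorem is equivalent to: this action has exactly six orbits on the Lorentzian part.

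\smallskip
First I would compute $\Aut(\LG)$ explicitly. Because $\R^{n-3}$ is an abelian direct factor and the derived algebra is $\R Z \subset \LH_3$, every automorphism preserves the derived subalgebra $\R Z$ and the center $\R Z \oplus \R^{n-3}$; writing automorphisms as block matrices with respect to the decomposition $\R Z \oplus \R\{X,Y\} \oplus \R^{n-3}$ gives a concrete (solvable, non-reductive) matrix group whose key feature is that the $\{X,Y\}$-block lies in $\GL(2,\R)$ scaling $Z$ by its determinant, together with a large unipotent radical mixing in the $W_i$ and shearing toward $Z$. The point of having this group explicitly is that it is "almost as big as possible," so orbits are few. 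Then I would take an arbitrary Lorentzian inner product $\langle\cdot,\cdot\rangle$ on $\LG$ and normalize it step by step using $\R^{\times}\Aut(\LG)$: use the unipotent shears to make $Z$ orthogonal to $\{X,Y\}$ and to the $W_i$, use the $\GL$-action on $\R^{n-3}$ and the symplectic-normalizing part of the $\{X,Y\}$-action to put the restriction of the metric to the complement into a short list of normal forms, and finally rescale. The combinatorics of which subspace the negative-definite direction sits in (inside $\R Z$, inside $\R\{X,Y\}$, mixed, or inside the $\R^{n-3}$ factor, possibly paired with $Z$) is what produces the finite list; one must also check these normal forms are pairwise inequivalent, which is done by computing automorphism-invariants (e.g. the sign of $\langle Z,Z\rangle$, the signature of the metric restricted to the derived algebra and to its orthogonal complement, and the causal type of the line $\R Z$).

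\smallskip
Concretely, the order of operations I expect: (1) describe $\Aut(\LG)$; (2) describe the space $\mathcal{M}$ of Lorentzian inner products and the induced $\R^{\times}\Aut(\LG)$-action; (3) by a sequence of normalizations produce a finite family of representatives $g_1,\dots,g_N$; (4) show $N=6$ by proving the representatives are pairwise inequivalent via invariants and that the list is exhaustive. An efficient technical device is to work on the "moduli space of left-invariant metrics" $\R^{\times}\Aut(\LG) \backslash \GL(n,\R)/\OO(1,n-1)$ and to exploit that $\OO(1,n-1)$ already acts transitively on unit timelike (resp. unit spacelike) vectors, so one may first fix where the timelike direction goes inside $\LG$ and then quotient by the stabilizer.

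\smallskip
The main obstacle I anticipate is step (3)–(4): controlling the action of the large unipotent radical of $\Aut(\LG)$ on the cross-terms between $\R Z$, $\R\{X,Y\}$ and $\R^{n-3}$, and making sure the case analysis (according to the causal character of $Z$ and of the center, and the position of the negative direction relative to the derived algebra) is exhaustive and non-redundant. In particular the borderline case where $\R Z$ is a null line, and the interaction between a null $Z$ and the Euclidean factor $\R^{n-3}$, is where the extra metrics beyond the three on $H_3$ are likely to appear, and verifying that exactly one new spacelike-type normal form and two new null/mixed-type normal forms survive (so that $3+3=6$) will require the most careful bookkeeping. Separating "flat" from "non-flat" among the six — needed for the later parts of the paper but already a useful inequivalence certificate — will be done by a direct curvature computation in each Milnor-type frame, which I would defer to after the orbit count is established.
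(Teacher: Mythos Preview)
Your proposal is correct in strategy and essentially follows the paper's approach: reduce to the orbit problem for $\R^\times\Aut(\LG)$ acting on $\GL(n,\R)/\OO(n-1,1)$, produce a finite list of representatives by normalization, and separate them by automorphism-invariant signatures. Two remarks on execution are worth making.

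First, the paper carries out step~(3) not by normalizing the inner product directly but by computing the double coset space $H\backslash\GL(n,\R)/\OO(n-1,1)$ via explicit matrix reductions (using $\OO(n-1)$-rotations, an $\OO(1,1)$ ``hyperbola'' move on the last row, and upper-triangular sweeps from $H$); this yields representatives $g_{\lambda,\xi}$ with $(\lambda,\xi)$ in a six-element set. Your direct-normalization route is equivalent in principle, but the double-coset bookkeeping is cleaner and avoids the case explosion you anticipate in the unipotent part.

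Second, for step~(4) the invariant that actually separates all six classes is the pair of signatures of $\langle\,,\,\rangle$ restricted to the \emph{center} $Z(\LG)=\R Z\oplus\R^{n-3}$ and to the derived algebra $[\LG,\LG]=\R Z$; the signature on the orthogonal complement of $[\LG,\LG]$ is metric-dependent and not directly usable. The causal type of $\R Z$ alone gives only three bins, and the count is $1+2+3$ (timelike/null/spacelike for $Z$, refined by the signature on the center), not the $3+3$ you guessed; the extra metrics beyond the $H_3$ list arise when $Z$ is null or spacelike, not through a single new spacelike form.
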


In the proof of this theorem, a key idea is a one-to-one correspondence between the equivalence classes of left-invariant Lorentzian metrics on $H_3 \times \R^{n-3}$ up to scaling and automorphisms, and orbits of some group action. In fact, this group action is given by the non-maximal parabolic subgroup
\begin{align*}
\left\{\left(
		\begin{array}{cc|ccc|c}
		\ast&\ast&0&\cdots&0&0\\ 
		\ast&\ast&0&\cdots&0&0\\ \hline
		\ast&\ast&\ast&\cdots&\ast&0\\
		\vdots&\vdots&\vdots&\ddots&\vdots&\vdots\\ 
		\ast&\ast&\ast&\cdots&\ast&0\\ \hline
		\ast&\ast&\ast&\cdots&\ast&\ast
		\end{array}
	\right) \in \GL(n, \R)\right\}
\end{align*}
in $\GL(n, \R)$, where the size of the block decomposition is $(2, n-3, 1)$, acting on the pseudo-Riemannian symmetric space $\GL(n, \R)/\OO(n-1, 1)$. Note that it has already been known that the number of orbits is finite for this action.  Our argument asserts that the number of orbits of this action is exactly six (see Remark~\ref{rem : wolf}). This result would have an independent interest. Recall that, in the cases of $G_{\RH^n}$ $(n \geq 2)$ and $H_3$, the corresponding actions are given by maximal parabolic subgroups in $\GL(n, \R)$, and there are exactly three orbits (\cite{KOTT, Rahmani}). In our case, since one has to study the action of a smaller group, we need more detailed arguments and the number of orbits increases.

The second main result of this paper studies the curvature properties of the above metrics. In fact, we obtain a Milnor-type theorem for the Lie group $H_3 \times \R^{n-3}$, which gives a kind of generalization of Milnor frames. By calculating the curvatures in terms of the obtained Milnor-type theorem, we prove the following.

\begin{Thm}
\label{thm : Ricci and Einstein}
All of the six left-invariant Lorentzian metrics obtained in Theorem~\ref{thm : classification of inner products} are Ricci soliton metrics. Only one of them is flat, and the other five are not Einstein. 
\end{Thm}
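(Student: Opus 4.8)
The plan is to exploit the Milnor-type theorem for $H_3 \times \R^{n-3}$ announced above: for each of the six equivalence classes it provides a distinguished basis $\{e_1,\dots,e_n\}$ of the Lie algebra $\LG = \LH_3 \oplus \R^{n-3}$ together with explicit structure constants and a nearly diagonal expression for a representative Lorentzian inner product. First I would fix one representative per class, record its bracket relations — since $H_3 \times \R^{n-3}$ is $2$-step nilpotent, only the single Heisenberg relation $[e_i,e_j] = \lambda e_k$ survives, with $e_k$ spanning the derived subalgebra (the center line of $\LH_3$) and $\lambda$ a class-dependent scalar that may be normalized by scaling — and then substitute these data into the Koszul formula. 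Because the frame is adapted to both the metric and the $2$-step structure, this yields $\nabla_{e_i}e_j$ for all $i,j$ in a few lines, the nonzero coefficients being linear in $\lambda$.

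From $\nabla$ I would compute the curvature operator $R(e_i,e_j)e_k$ and then the Ricci endomorphism $\Ric$ in the Milnor frame. The flat case is pinned down by exhibiting the one class for which every $R(e_i,e_j)e_k$ vanishes — this is the class whose restriction to $\LH_3$ is the flat Lorentzian Heisenberg metric, with the $\R^{n-3}$ factor spacelike and orthogonal to $\LH_3$. For the other five classes I would read off the matrix of $\Ric$: it has at least two distinct eigenvalues (a nonzero eigenvalue on part of $\LH_3$ and $0$ on the Euclidean directions and on $e_k$), so $\Ric$ is not a scalar multiple of $\id$ and the metric is not Einstein.

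For the Ricci soliton property I would use the algebraic characterization: a left-invariant pseudo-Riemannian metric on a nilpotent Lie group is a (genuine, in fact expanding) Ricci soliton once $\Ric = c\,\id + D$ for some $c \in \R$ and some $D \in \Der(\LG)$. For each representative, take $c$ to be the eigenvalue of $\Ric$ on $e_k$, set $D := \Ric - c\,\id$, and verify the derivation identity $D[x,y] = [Dx,y] + [x,Dy]$ on basis vectors; as the only nontrivial bracket is $[e_i,e_j] = \lambda e_k$, this collapses to the single scalar identity comparing the $D$-eigenvalue on $e_k$ with the sum of the $D$-eigenvalues on $e_i$ and $e_j$, which holds by the explicit $\Ric$ just computed. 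I expect the main obstacle to be the passage from the algebraic Ricci soliton condition to the actual soliton equation $\Ric + \tfrac12\,\mathcal{L}_X g = c\,g$ in Lorentzian signature, since the standard Riemannian arguments lean on features special to that signature; I would handle this either by invoking the analogue of Lauret's result valid for nilpotent Lie groups in arbitrary signature, or, more self-containedly, by producing the soliton vector field $X$ directly from $D$ and checking the soliton equation frame-by-frame.
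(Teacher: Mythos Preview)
Your plan misreads what the Milnor-type theorem actually delivers. The pseudo-orthonormal frame it produces does \emph{not} retain a single Heisenberg relation: in that frame the brackets are
\[
[x_1,x_2]=-(\lambda x_1-x_n),\qquad [x_2,x_{n-1}]=\xi(\lambda x_1-x_n),\qquad [x_2,x_n]=\lambda(\lambda x_1-x_n),
\]
depending on \emph{two} parameters $(\lambda,\xi)\in\llu$, not one. The single relation $[e_1,e_2]=e_n$ lives only in the standard basis, in which the six representative metrics are not pseudo-orthonormal; you cannot have both at once. As a consequence the Ricci endomorphism is not diagonal in the Milnor frame --- for instance $\Ric(x_1)$ has nonzero $x_{n-1}$- and $x_n$-components --- so your proposed derivation check ``compare the $D$-eigenvalue on $e_k$ with the sum of the $D$-eigenvalues on $e_i$ and $e_j$'' does not apply: $D=\Ric-c\,\id$ is not diagonal, and verifying $D\in\Der(\LG)$ is not a single scalar identity.

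The paper's route is to compute the full (non-diagonal) matrix $A$ of $\Ric$ in the Milnor frame and then conjugate by $g_{\lambda,\xi}$ back to the standard basis, where $\R\,\id\oplus\Der(\LG)$ has the explicit block-triangular description $\Dd$; the check $g_{\lambda,\xi}A\,g_{\lambda,\xi}^{-1}\in\Dd$ is then a straightforward matrix computation. Two smaller points: for $(\lambda,\xi)=(1,1)$ and $(2,\sqrt{3})$ all eigenvalues of $\Ric$ vanish while $\Ric\neq0$ (it is nilpotent), so your ``two distinct eigenvalues'' criterion for non-Einstein fails there, although the conclusion survives; and the flat class is $(\lambda,\xi)=(1,0)$, which is not simply a product of the flat Lorentzian $H_3$ metric with a spacelike $\R^{n-3}$ in the way you describe. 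Your caution about passing from the algebraic soliton condition to the genuine soliton equation in indefinite signature is appropriate; the paper handles this by citing the pseudo-Riemannian extension of Lauret's argument.
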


We also study the closure relation among these six orbits. In view of the correspondence between the orbits and the equivalence classes of the metrics mentioned above, it would be natural to expect that some distinguished orbits are corresponding to some distinguished metrics. This expectation turned out to be true in our case, which is the last main result. Note that closed orbits do not degenerate further, and hence can be regarded as the most distinguished orbits from the viewpoint of the degenerations.

\begin{Thm}
\label{thm : flat and closed}
A left-invariant Lorentzian metric on $H_3 \times \R^{n-3}$ $(n \geq 4)$ is flat if and only if the corresponding orbit is a closed orbit, that is, its equivalence class up to scaling and automorphisms is a closed set in $\GL(n,\R)/\OO(n-1, 1)$.
\end{Thm}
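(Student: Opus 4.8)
The plan is to prove Theorem~\ref{thm : flat and closed} by combining the explicit list of six equivalence classes obtained in Theorem~\ref{thm : classification of inner products} with the orbit correspondence already set up in the paper: equivalence classes of left-invariant Lorentzian metrics on $H_3 \times \R^{n-3}$ up to scaling and automorphisms correspond bijectively to orbits of the non-maximal parabolic subgroup $P \subset \GL(n,\R)$ acting on $\GL(n,\R)/\OO(n-1,1)$. By Theorem~\ref{thm : Ricci and Einstein} exactly one of the six metrics is flat, so it suffices to identify among the six orbits a unique closed one and show it is the orbit corresponding to the flat metric. Since there are only finitely many orbits (six), the closure of any orbit is a union of orbits of strictly smaller dimension together with the orbit itself; hence at least one orbit is closed, and the natural candidate is the one of minimal dimension. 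First I would write down, for each of the six representative inner products $\langle\cdot,\cdot\rangle_i$ on the Lie algebra $\mathfrak{h}_3 \oplus \R^{n-3}$, the corresponding point $A_i \OO(n-1,1) \in \GL(n,\R)/\OO(n-1,1)$, and compute the dimension of its $P$-orbit as $\dim P - \dim(\mathrm{Stab}_P(A_i\OO(n-1,1)))$; the stabilizer computation amounts to linear algebra inside $P$.

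Next I would argue closedness directly for the flat orbit. The cleanest route is to use the standard fact (true for algebraic group actions, and $P$ is algebraic) that among finitely many orbits the ones of minimal dimension are closed, and then verify that the flat metric's orbit is the unique orbit of minimal dimension; alternatively, one can exhibit the flat orbit as the zero set of $P$-invariant (or $P$-semiinvariant up to scaling) polynomial functions on $\GL(n,\R)/\OO(n-1,1)$ — for instance functions built from the isotropy representation or from the norms of the bracket — which vanish precisely on the flat class. For the converse, that no other orbit is closed, I would show that each of the remaining five representatives can be deformed continuously, via a one-parameter subgroup $g_t \in P$ (a suitable diagonal or unipotent flow), so that $g_t \cdot A_i \OO(n-1,1)$ converges as $t \to 0$ (or $t \to \infty$) to a point lying in a strictly smaller orbit — concretely, to the flat orbit itself. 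This is the usual ``degeneration'' picture: scaling the Heisenberg grading by $\diag(t,t,t^2)$-type elements contracts the bracket, and one checks the limiting inner product is (a representative of) the flat one. Doing this for all five non-flat representatives simultaneously is most efficiently organized by first normalizing the six representatives into a common form, so that a single family of elements of $P$ handles several cases at once.

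The main obstacle I anticipate is the converse direction: showing each non-flat orbit is \emph{not} closed requires producing an explicit degeneration inside the \emph{non-maximal} parabolic $P$, whose unipotent radical and Levi part are more constrained than in the $G_{\RH^n}$ or $H_3$ cases (where the relevant group is a maximal parabolic). One must ensure the contracting one-parameter subgroup actually lies in $P$ (respecting the $(2,n-3,1)$ block structure) and that the limit is a well-defined point of $\GL(n,\R)/\OO(n-1,1)$ rather than running off to the boundary of $\GL(n,\R)$ — this forces a careful choice balancing the powers of $t$ against the $\OO(n-1,1)$ freedom, possibly combined with a compensating rescaling (which is allowed since we work up to scaling anyway). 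A secondary, more bookkeeping-type difficulty is keeping the six representatives in a normal form compatible with the Milnor-type theorem of Theorem~\ref{thm : Ricci and Einstein}, so that the curvature computation (identifying which class is flat) and the orbit-dimension computation refer to the same representatives; once that alignment is in place, the equivalence ``flat $\iff$ closed'' follows by matching the unique flat class (from Theorem~\ref{thm : Ricci and Einstein}) with the unique minimal-dimensional, hence closed, orbit, and ruling out closedness of the others by the degenerations above.
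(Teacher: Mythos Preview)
Your plan is essentially the paper's own: compute the stabilizer (hence orbit) dimensions for all six representatives, identify the flat class $[\hoge< , >_{1,0}]$ as the unique orbit of minimal dimension (codimension $n-2$, versus $0$ or $1$ for the others), and then exhibit explicit degenerations of the remaining five orbits to conclude they are not closed. The paper also records the full Hasse diagram of degenerations, but for Theorem~\ref{thm : flat and closed} only the non-closedness of the five non-flat orbits is needed, exactly as you say.

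Where your execution differs, and where the paper's trick removes precisely the obstacle you flag, is in how the degenerations are produced. You propose to find one-parameter subgroups $g_t$ lying in the parabolic $P=\R^\times\Aut(\LG)$ and push a representative to a limit. The paper avoids searching for such $g_t$ altogether: it simply lets the parameters $(\lambda,\xi)$ vary continuously in the family $\hoge< , >_{\lambda,\xi}=g_{\lambda,\xi}.\hoge< , >_0$ and uses the $\Aut(\LG)$-invariant data already computed in Proposition~\ref{prop : signature on subalgebra} (the signatures of the restriction to $Z(\LG)$ and to $[\LG,\LG]$) to read off, for each $(\lambda,\xi)$, which of the six orbits $\hoge< , >_{\lambda,\xi}$ lies in. Thus, for example, $\hoge< , >_{t,t}$ with $t\in[0,1)$ all lie in $[\hoge< , >_{0,0}]$ and limit to $\hoge< , >_{1,1}$, giving $[\hoge< , >_{0,0}]\to[\hoge< , >_{1,1}]$; four more one-line arguments of the same type handle the rest. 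No element of $P$ ever needs to be written down, and the delicate balancing you anticipate disappears.

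One small correction: your suggested flow $\diag(t,t,t^2)$ (extended by the identity on the $\R^{n-3}$ factor) is itself in $\Aut(\LG)$, since $[te_1,te_2]=t^2e_n$, so acting by it stays within the same orbit and cannot produce a degeneration. If you pursue the one-parameter-subgroup route you will need genuinely non-automorphic elements of $P$, but the signature-invariant method above makes this unnecessary.
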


In the preceding studies, for $G_{\RH^n}$ $(n \geq 2)$ and $H_3$, one has the same correspondences. In fact, for each non-Riemannian signature, these Lie groups admit unique flat left-invariant metrics up to scaling and automorphisms (\cite{KOTT}, \cite{RR}). One can also see that these flat left-invariant pseudo-Riemannian metrics are exactly corresponding to the unique closed orbits. It would be interesting to study whether this kind of nice correspondences also hold for other cases. 



The authors would like to thank Takayuki Okuda, Akira Kubo and Yuichiro Taketomi for valuable comments and suggestions. The authors are also grateful to Toshihiko Matsuki for precious advice, which have a strong influence to our studies.

\section{Preliminaries}
\label{sec2}

In this section, we recall a general theory on left-invariant metrics on Lie groups, both for Riemannian and pseudo-Riemannian. Throughout this section, let $G$ be a real Lie group of dimension $n$ and $\LG$ be its corresponding Lie algebra. We fix a basis $\{e_1, \ldots, e_n\}$ of $\LG$, and identify $\LG \cong \R^n$ as vector spaces.

\subsection{The spaces of left-invariant metrics on Lie groups}

In this subsection, we recall the notion of the spaces of left-invariant pseudo-Riemannian metrics on Lie groups. This notion has been introduced in \cite{KOTT}. We also refer to \cite{KTT} for the Riemannian case.

First of all, let us recall the signature of an inner product. Let $V$ be a real vector space of dimension $n$ and $\hoge< , >$ be an inner product on $V$, which is not necessarily positive definite. Fix a basis $\{v_1, \ldots, v_n\}$ of $V$ and identify $V \cong \R^n$. Then, there exists a symmetric matrix $A$ such that for any $x, y \in V$, 
\begin{align*}
\hoge<x, y>=\trans xAy.
\end{align*}
Then the pair of the numbers of positive and negative eigenvalues of $A$ is called the {\it signature} of $\hoge< , >$. Note that the signature $(p, q)$ with $p, q \in \Z_{\geq 0}$ of $\hoge< , >$ satisfies $p+q=n$, since $\hoge< , >$ is nondegenerate.

Next we consider left-invariant pseudo-Riemannian metrics on $G$. Recall that a metric is said to be of signature $(p, q)$ if so is the induced inner product on each tangent space. We are interested in a classification of left-invariant pseudo-Riemannian metrics on $G$. For this purpose, we  denote the space of left-invariant pseudo-Riemannian metrics by
\begin{align*}
\LLM_{(p, q)}(G):=\{\mbox{a\ left-invariant\ metric\ of\ signature}\ (p, q)\ \mbox{on\ }G\}.
\end{align*}

We then consider the counterpart in the Lie algebra $\LG$ of $G$. It is well-known that there is a one-to-one correspondence between $\LLM_{(p, q)}(G)$ and the space of inner products of the same signature,
\begin{align*}
\LLM_{(p, q)}(\LG):=\{\hoge< , > : \mbox{an inner product of signature}\ (p, q)\ \mbox{on}\ \LG\}.
\end{align*}
Recall that we identify $\LG \cong \R^n$. Then $\GL(n, \R)$ acts transitively on this space by
\begin{align*}
g.\hoge<x, y>:=\hoge<g^{-1}x, g^{-1}y> \quad (\forall x, y \in \LG).
\end{align*}

From now on, we explain the equivalence relation on inner products, which corresponds to the equivalence relation on $\LLM_{(p, q)}(G)$ given by Definition~\ref{def : up to scaling and auto}.
Let us consider the automorphism group of $\LG$, 
\begin{align*}
\Aut(\LG):=\{\varphi \in \GL(n, \R) \mid \forall x, y \in \LG, \varphi([x, y])=[\varphi(x), \varphi(y)]\}.
\end{align*}
We also put $\R^\times:=\R \setminus \{0\}$. In this paper we consider the group action by 
\begin{align*}
\R^\times \Aut(\LG):=\{c\varphi \in \GL(n, \R) \mid c \in \R^\times, \varphi \in \Aut(\LG)\}.
\end{align*}
Since this is a subgroup of $\GL(n, \R)$, it naturally acts on $\LLM_{(p, q)}(\LG)$. We denote the orbit through $\hoge< , >$ by $\R^\times \Aut(\LG).\hoge< , >$.

\begin{Def}
\label{def : isometric up to scaling}
Let $\hoge< , >_1, \hoge< , >_2 \in \LLM_{(p, q)}(\LG)$. Then, $(\LG, \hoge< , >_1)$ and $(\LG, \hoge< , >_2)$ are said to be {\it equivalent up to scaling and automorphisms} if it satisfies
\begin{align*}
\hoge< , >_1 \in \R^\times \Aut(\LG).\hoge< , >_2.
\end{align*}
\end{Def}

This notion gives an equivalence relation on $\LLM_{(p, q)}(\LG)$. If a given Lie group $G$ is connected and simply-connected, then one knows $\Aut(G) \cong \Aut(\LG)$, and hence the classification of inner products on $\LG$ by the action of $\RAutg$ is equivalent to the classification of left-invariant pseudo-Riemannian metrics on $G$ up to scaling and automorphisms. Therefore it is natural to consider the following orbit space:
\begin{align*}
\R^\times \Aut(\LG) \backslash \LLM_{(p, q)}(\LG):=\{\R^\times \Aut(\LG).\hoge< , > \mid \hoge< , > \in \LLM_{(p, q)}(\LG)\}.
\end{align*}
This space can be regarded as the moduli space of left-invariant pseudo-Riemannian metrics on $G$ of signature $(p, q)$.

Finally in this subsection, we give a remark on a classification of left-invariant pseudo-Riemannian metrics on $G$ up to scaling and isometry, defined as follows.

\begin{Def}
\label{def : isometric up to scaling on Lie group}
Let $g_1, g_2 \in \LLM_{(p, q)}(G)$. Then, $(G, g_1)$ and $(G, g_2)$ are said to be {\it isometric up to scaling} and denoted by $g_1 \sim_G g_2$ if there exist $c>0$ and a diffeomorphism $\varphi: G \to G$ such that for any $p \in G$ and $x, y \in T_pG$,
\begin{align*}
g_1(x, y)_p=cg_2(d\varphi_p(x), d\varphi_p(y))_{\varphi(p)}.
\end{align*}
\end{Def}

One can define an equivalence relation $\sim_{\LG}$ on $\LLM_{(p, q)}(\LG)$ induced from $\sim_G$, that is, there exists a one-to-one correspondence
\begin{align*}
\LLM_{(p, q)}(G)/\sim_G \overset{1:1}{\longleftrightarrow} \LLM_{(p, q)}(\LG)/\sim_\LG.
\end{align*}
By definition, if two left-invariant metrics are equivalent up to scaling and automorphisms, then they are isometric up to scaling. Therefore there exists a surjection
\begin{align*}
\RAutg \backslash \LLM_{(p, q)}(\LG) \twoheadrightarrow \LLM_{(p, q)}(\LG)/\sim_\LG.
\end{align*}
In this paper, as we referred above, we focus on the classification of inner products by the action of $\RAutg$. In order to obtain the classification up to $\sim_G$ or $\sim_{\LG}$, we need to distinguish elements in $\RAutg \backslash \LLM_{(p, q)}(\LG)$, which can be equivalent in the sense of $\sim_\LG$.

\subsection{A set of representatives}

As in the previous section, a classification of left-invariant pseudo-Riemannian metrics on $G$ up to scaling and automorphisms is equivalent to determine the orbit space $\RAutg \backslash \LLM_{(p, q)}(\LG)$. In order to determine the orbit space, the notion of set of representatives is useful. In this subsection, we recall this notion and some of the properties. We also recall the procedure to obtain Milnor-type theorems.

Let $I_k$ be the unit matrix of order $k$, and put
\begin{align*}
I_{p, q}:=\left(
		\begin{array}{cc}
		I_p&\\
		&-I_q
		\end{array}
		\right).
\end{align*}
We consider the canonical inner product of signature $(p, q)$ on $\LG \cong \R^{p+q}$, defined by
\begin{align*}
\hoge<x, y>_0:=\trans xI_{p, q}y \quad (\forall x, y \in \LG).
\end{align*}

\begin{Def}
Let $H$ be a subgroup of $\GL(p+q, \R)$ and consider the action of $H$ on $\LLM_{(p, q)}(\LG)$. Then, a subset $\LLU \subset \GL(p+q, \R)$ is called a {\it set of representatives} of this action if the orbit space satisfies
\begin{align*}
H \backslash \LLM_{(p, q)}(\LG)=\{H.(g_0.\hoge< , >_0) \mid g_0 \in \LLU\}.
\end{align*}
\end{Def}

In order to obtain a set of representatives $\LLU$, the notion of double cosets is useful. Recall that the indefinite orthogonal group $\OO(p, q)$ is defined as the isotropy subgroup of $\GL(p+q, \R)$ at $\hoge< , >_0$. One thus has an expression as homogeneous space
\begin{align*}
\LLM_{(p, q)}(\LG)=\GL(p+q, \R)/\OO(p, q),
\end{align*}
by which one can see that $\LLM_{(p, q)}(\LG)$, and hence $\LLM_{(p, q)}(G)$, is a pseudo-Riemannian symmetric space. It also follows that the orbit space $H \backslash \LLM_{(p, q)}(\LG)$ can be represented as a double coset space. Then one knows the following by a standard theory of double coset spaces.

\begin{Lem}[cf.~\cite{KOTT}]
\label{lem : double coset}
Consider an action of a subgroup $H \subset \GL(p+q, \R)$ on $\LLM_{(p, q)}(\LG)$. Then, a subset $\LLU \subset \GL(p+q, \R)$ is a set of representatives of this action if and only if for any $g \in \GL(p+q, \R)$, there exists $g_0 \in \LLU$ such that $g_0 \in Hg\OO(p, q)$.
\end{Lem}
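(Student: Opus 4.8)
The plan is to reduce the statement to the standard correspondence between orbits of a group acting on a coset space and double cosets. The starting point is the homogeneous-space description $\LLM_{(p,q)}(\LG)=\GL(p+q,\R)/\OO(p,q)$ recalled just above: the orbit map
\begin{align*}
\GL(p+q,\R) \to \LLM_{(p,q)}(\LG), \quad g \mapsto g.\hoge<,>_0
\end{align*}
is surjective by transitivity of the $\GL(p+q,\R)$-action, and its fibers are the left cosets $g\OO(p,q)$, since the isotropy subgroup at $\hoge<,>_0$ is exactly $\OO(p,q)$. First I would record this bijection carefully, paying attention to the inverse appearing in the action $g.\hoge<x,y>=\hoge<g^{-1}x,g^{-1}y>$: a direct computation shows the matrix of $g.\hoge<,>_0$ is $\trans{(g^{-1})}I_{p,q}g^{-1}$, so that $g.\hoge<,>_0=g'.\hoge<,>_0$ holds precisely when $g^{-1}g' \in \OO(p,q)$, i.e. $g' \in g\OO(p,q)$.

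Next I would identify the $H$-orbits in these terms. The $H$-orbit through $g.\hoge<,>_0$ is
\begin{align*}
H.(g.\hoge<,>_0)=\{(hg).\hoge<,>_0 \mid h \in H\},
\end{align*}
which is the image under the orbit map of the double coset $Hg\OO(p,q)$. Combined with the previous paragraph, this yields a bijection between the orbit space $H \backslash \LLM_{(p,q)}(\LG)$ and the double coset space $H \backslash \GL(p+q,\R)/\OO(p,q)$, under which the orbit $H.(g.\hoge<,>_0)$ corresponds to $Hg\OO(p,q)$. In particular two orbits $H.(g.\hoge<,>_0)$ and $H.(g_0.\hoge<,>_0)$ coincide if and only if $Hg\OO(p,q)=Hg_0\OO(p,q)$, equivalently $g_0 \in Hg\OO(p,q)$.

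With this dictionary the equivalence is immediate in both directions. By definition $\LLU$ is a set of representatives exactly when every $H$-orbit equals $H.(g_0.\hoge<,>_0)$ for some $g_0 \in \LLU$; using transitivity to write an arbitrary inner product as $g.\hoge<,>_0$, this says that for every $g \in \GL(p+q,\R)$ there is $g_0 \in \LLU$ with $H.(g.\hoge<,>_0)=H.(g_0.\hoge<,>_0)$, which by the last sentence of the previous paragraph is precisely the condition $g_0 \in Hg\OO(p,q)$. Conversely, if this double-coset condition holds for every $g$, then reading the same bijection backwards shows that every orbit is represented, so $\LLU$ is a set of representatives.

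I do not expect a genuine obstacle here: the content is the standard translation between the orbit space of a homogeneous space $G/K$ under a subgroup and the double cosets $H \backslash G/K$, and the only point requiring care is bookkeeping the inverse in the definition of the action so that the fibers of the orbit map come out as the left cosets $g\OO(p,q)$ rather than right cosets. Accordingly the argument is essentially the one cited from \cite{KOTT}, and no new ideas beyond the homogeneous-space structure already established are needed.
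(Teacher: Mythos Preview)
Your proposal is correct and follows exactly the approach the paper intends: the paper does not give a proof but simply remarks that the lemma ``follows by a standard theory of double coset spaces'' and cites \cite{KOTT}, and your argument is precisely that standard theory---identifying $\LLM_{(p,q)}(\LG)$ with $\GL(p+q,\R)/\OO(p,q)$ and translating $H$-orbits into double cosets $Hg\OO(p,q)$. Your care with the inverse in the action to pin down the fibers as left cosets is the only delicate point, and you handle it correctly.
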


Next we describe two lemmas, which we use to calculate a set of representatives for our case in Section~\ref{sec3}. The first one is about an action of $\OO(1, 1)$. Note that $\OO(1, 1)$ is naturally a subgroup of $\OO(p, q)$ for $p, q \geq 1$.

\begin{Lem}[\cite{KOTT}]
\label{lem : hyperbola}
Let $(x, y) \neq (0, 0)$. Then, there exist $a>0$, $\lambda \in \{0, 1, 2\}$, and $g \in \OO(1, 1)$ such that $(x, y)g=(-\lambda a, a)$ holds.
\end{Lem}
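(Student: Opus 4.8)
The plan is to prove the statement following the argument in \cite{KOTT}: exhibit, in each of three cases, an explicit element of $\OO(1,1)$ carrying $(x,y)$ to the prescribed normal form. Recall that $\OO(1,1)$ is the isotropy at $\hoge< , >_0$ on $\R^2$, so under the action $(x,y) \mapsto (x,y)g$ it preserves the quantity $N := x^2 - y^2$, and it is generated by the hyperbolic rotations
\begin{align*}
B_\theta := \left( \begin{array}{cc} \cosh\theta & \sinh\theta \\ \sinh\theta & \cosh\theta \end{array} \right) \quad (\theta \in \R)
\end{align*}
together with the diagonal matrices $\mathrm{diag}(\pm 1, \pm 1)$. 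Note that for the candidate output $(-\lambda a, a)$ one has $N = (\lambda^2 - 1)a^2$, which is negative, zero, or positive according as $\lambda$ is $0$, $1$, or $2$; hence $\lambda$ is forced by the sign of $N$, and for $\lambda \ne 1$ the number $a = \sqrt{|N|/|\lambda^2 - 1|}$ is forced as well.

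First I would treat $N < 0$: here $|y| > |x|$, so there is $\theta$ with $\tanh\theta = -x/y$, the first entry of $(x,y)B_\theta$ vanishes, and since $N$ is preserved $(x,y)B_\theta = (0, \pm a)$ with $a := \sqrt{-N} > 0$; composing with $\mathrm{diag}(1,-1)$ if needed gives $(0,a)$, i.e.\ $\lambda = 0$. If $N = 0$ then $x^2 = y^2$ with $x, y \ne 0$, and with $a := |x| = |y|$ one of the four matrices $\mathrm{diag}(\pm 1, \pm 1)$ sends $(x,y)$ to $(-a, a)$, i.e.\ $\lambda = 1$. Finally, if $N > 0$ then $|x| > |y|$; passing to the null coordinates $u = x + y$, $v = x - y$ — in which $\OO(1,1)$ is generated by the dilations $(u,v) \mapsto (tu, t^{-1}v)$ with $t > 0$, the swap $(u,v) \mapsto (v,u)$, and $-\id$ — one checks that this group acts transitively on each level set $\{uv = N\}$, that is, on the hyperbola $\{x^2 - y^2 = N\}$. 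Since the point $(-2a, a)$ with $a := \sqrt{N/3}$ lies on this hyperbola, it is in the orbit, giving $\lambda = 2$. Together the three cases prove the lemma.

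I do not expect a genuine obstacle, as the statement is elementary; the only points requiring care are the bookkeeping of conventions (the row-vector action, and the natural inclusion $\OO(1,1) \subset \OO(p,q)$) and the role of the disconnectedness of $\OO(1,1)$: boosts alone preserve each of the four rays of the null cone and each branch of a hyperbola, so one must invoke the non-identity components $\mathrm{diag}(\pm 1, \pm 1)$ both in the null case and whenever the sign normalization $a > 0$ with first entry $-\lambda a \le 0$ has to be arranged.
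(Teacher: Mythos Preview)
Your proof is correct: the case split on the sign of $N = x^2 - y^2$ together with the explicit use of boosts $B_\theta$ and the sign matrices $\mathrm{diag}(\pm1,\pm1)$ handles all three normal forms, and your transitivity argument for the $N>0$ case in null coordinates is clean. Note that the paper itself does not prove this lemma---it is quoted from \cite{KOTT}---so there is nothing to compare against in the present paper; your argument is the standard one and presumably coincides with the proof given there.
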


The second lemma states the correspondence between sets of representatives of the actions of $H$ and $H'$, where the latter group is defined by
\begin{align*}
H':=\{\trans h \mid h \in H\}.
\end{align*}

\begin{Lem}[\cite{KOTT}]
\label{lem : action of transpose}
Let $H$ be a subgroup of $\GL(p+q, \R)$, and $\LLU$ be a set of representatives of the action of $H$ on $\LLM_{(p, q)}(\LG)$. Then, the following $\LLU^\ast$ is a set of representatives of the action of $H'$ on $\LLM_{(p, q)}(\LG):$
\begin{align*}
\LLU^\ast:=\{\trans u^{-1} \mid u \in \LLU\}.
\end{align*}
\end{Lem}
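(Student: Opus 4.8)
The plan is to prove Lemma~\ref{lem : action of transpose} by unwinding the double coset characterization of a set of representatives given in Lemma~\ref{lem : double coset}, together with the observation that transposition interacts nicely with the group $\OO(p,q)$. First I would record the elementary fact that $\trans{(\OO(p,q))}=\OO(p,q)$: indeed, $h\in\OO(p,q)$ means $\trans h I_{p,q} h = I_{p,q}$, and taking inverses and using that $I_{p,q}=\trans I_{p,q}=I_{p,q}^{-1}$ shows $\trans h\in\OO(p,q)$ as well. Likewise I would note the trivial identity $\trans{(H')}=H$ and that the assignment $g\mapsto\trans g^{-1}$ is an involutive bijection of $\GL(p+q,\R)$ onto itself (an anti-automorphism composed with inversion, hence an automorphism of the group, and its own inverse).

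Next I would apply Lemma~\ref{lem : double coset} to $\LLU$: the hypothesis that $\LLU$ is a set of representatives for $H$ says precisely that for every $g\in\GL(p+q,\R)$ there is $u\in\LLU$ with $g\in Hu\OO(p,q)$. The goal, again by Lemma~\ref{lem : double coset} applied to $H'$, is to show that for every $g\in\GL(p+q,\R)$ there is $u\in\LLU$ with $g\in H'\,\trans u^{-1}\,\OO(p,q)$. Given such a $g$, I would apply the hypothesis not to $g$ but to the element $\trans g^{-1}$: there exist $h\in H$, $u\in\LLU$, $k\in\OO(p,q)$ with $\trans g^{-1}=h\,u\,k$. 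Taking inverse transpose of both sides yields $g=\trans{(h\,u\,k)}^{-1}=\trans k^{-1}\,\trans u^{-1}\,\trans h^{-1}$. Since $\trans k^{-1}\in\OO(p,q)$ by the first step, and $\OO(p,q)$ is a group, and since $\trans h^{-1}\in H'$ by definition of $H'$ (as $\trans{(\trans h^{-1})}=h^{-1}\in H$), this exhibits $g\in \OO(p,q)\,\trans u^{-1}\,H'$. Finally, rewriting the double coset $\OO(p,q)\,x\,H'$ as $H'\,x'\,\OO(p,q)$ requires one more transpose-inverse: a double coset $H'xK$ with $K=\OO(p,q)$ can be read from either side, but to match the exact form $H'\,\trans u^{-1}\,\OO(p,q)$ I would simply observe that $\OO(p,q)\,\trans u^{-1}\,H'$ consists of elements $k_1 \trans u^{-1} h_1$ with $k_1\in\OO(p,q),\ h_1\in H'$, and its inverse-transpose image is $H'\,u^{-1}\,\OO(p,q)$; so instead it is cleanest to run the argument symmetrically, or equivalently to note $g\in\OO(p,q)\trans u^{-1}H'$ iff $\trans g^{-1}\in H u\OO(p,q)$, which closes the equivalence directly without any asymmetry.

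Given the symmetry, the argument is genuinely a chain of three trivial closure facts, so I would present it compactly: (i) $\OO(p,q)$ is stable under $g\mapsto\trans g^{-1}$; (ii) $g\mapsto\trans g^{-1}$ sends $H$ to $H'$ bijectively and is an involution of $\GL(p+q,\R)$; (iii) therefore it carries the double coset decomposition $\GL = \bigcup_{u\in\LLU} Hu\OO(p,q)$ to the decomposition $\GL = \bigcup_{u\in\LLU} H'\,\trans u^{-1}\,\OO(p,q)$, and invoke Lemma~\ref{lem : double coset} in both directions. There is essentially no obstacle here; the only point requiring the slightest care is bookkeeping the order of factors and making sure the involution is applied to $\trans g^{-1}$ (not $g$) so that the output is stated for an arbitrary $g$. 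I would not expect to need Lemma~\ref{lem : hyperbola} or anything about the specific nilpotent groups for this statement, as it is a purely group-theoretic fact about double cosets in $\GL(p+q,\R)$.
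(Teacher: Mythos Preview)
The paper does not prove this lemma; it is quoted from \cite{KOTT} without proof, so there is no argument in the paper to compare against. Your approach via the double-coset characterization of Lemma~\ref{lem : double coset} together with the group automorphism $g\mapsto\trans g^{-1}$ of $\GL(p+q,\R)$ is correct and is the natural proof.

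One small computational slip created unnecessary work in your middle paragraph: from $\trans g^{-1}=h\,u\,k$ one gets
\[
g=\trans(huk)^{-1}=\trans h^{-1}\,\trans u^{-1}\,\trans k^{-1},
\]
not $\trans k^{-1}\,\trans u^{-1}\,\trans h^{-1}$ as you wrote (transpose reverses the order once, and inversion reverses it again, so the net effect preserves the order). With the factors in the correct order, $\trans h^{-1}\in H'$ and $\trans k^{-1}\in\OO(p,q)$ place $g$ directly in $H'\,\trans u^{-1}\,\OO(p,q)$, and the whole detour through $\OO(p,q)\,\trans u^{-1}\,H'$ and the subsequent symmetry discussion is unnecessary. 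Your final compact formulation (i)--(iii), using that the automorphism $g\mapsto\trans g^{-1}$ carries $H$ to $H'$, fixes $\OO(p,q)$, and hence maps the double coset $Hu\OO(p,q)$ to $H'\,\trans u^{-1}\,\OO(p,q)$, is clean and correct.
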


Finally in this subsection, we describe a theorem which gives a procedure to obtain Milnor-type theorems. For this purpose, we need the notion of pseudo-orthonormal bases. We put
\begin{align*}
\varepsilon_i:=\left\{
			\begin{array}{ll}
				1&(i \in \{1, \ldots, p\}),\\
				-1&(i \in \{p+1, \ldots, p+q\}).
			\end{array}
			\right.
\end{align*}
We also use the Kronecker's delta $\delta_{ij}$. Then, a basis $\{x_1, \ldots, x_{p+q}\}$ of $\LG$ is said to be {\it pseudo-orthonormal} with respect to $\hoge< , > \in \LLM_{(p, q)}(\LG)$ if it satisfies
\begin{align*}
\hoge<x_i, x_j>=\varepsilon_i\delta_{ij} \quad (\forall i, j \in \{1, \ldots, p+q\}).
\end{align*}

\begin{Thm}[\cite{KOTT}]
\label{thm : procedure}
Let $\LLU$ be a set of representatives of the action of $\R^\times \Aut(\LG)$ on $\LLM_{(p, q)}(\LG)$. Then, for every inner product $\hoge< , >$ of signature $(p, q)$ on $\LG$, there exist $k>0$, $\varphi \in \Aut(\LG)$ and $g_0 \in \LLU$ such that $\{\varphi g_0e_1, \ldots, \varphi g_0e_{p+q}\}$ is pseudo-orthonormal with respect to $k\hoge< , >$.
\end{Thm}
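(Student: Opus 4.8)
The plan is to unwind the definitions and reduce the assertion to the defining property of a set of representatives, using one elementary observation that links transported standard bases to inner products. First I would record that dictionary: for $g \in \GL(p+q, \R)$ and $\hoge< , > \in \LLM_{(p, q)}(\LG)$, the basis $\{ge_1, \ldots, ge_{p+q}\}$ is pseudo-orthonormal with respect to $\hoge< , >$ if and only if $\hoge< , > = g.\hoge< , >_0$. The ``if'' direction is the computation $\hoge<ge_i, ge_j> = \hoge<g^{-1}ge_i, g^{-1}ge_j>_0 = \trans e_i I_{p, q} e_j = \varepsilon_i \delta_{ij}$; the ``only if'' direction holds because an inner product is determined by its Gram matrix in any one basis, so pseudo-orthonormality of $\{ge_i\}$ forces $\hoge< , >$ and $g.\hoge< , >_0$ to agree on that basis and hence to coincide.

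With this in hand, let $\hoge< , > \in \LLM_{(p, q)}(\LG)$ be arbitrary. The hypothesis that $\LLU$ is a set of representatives of the $\RAutg$-action says precisely that $\hoge< , > \in \RAutg.(g_0.\hoge< , >_0)$ for some $g_0 \in \LLU$. Writing the element of $\RAutg$ realizing this as $c\varphi$ with $c \in \R^\times$ and $\varphi \in \Aut(\LG)$, and composing the two $\GL(p+q, \R)$-actions, I get $\hoge< , > = (c\varphi g_0).\hoge< , >_0$, i.e. $\hoge<x, y> = c^{-2}\hoge<(\varphi g_0)^{-1}x, (\varphi g_0)^{-1}y>_0$ for all $x, y \in \LG$. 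Putting $x = \varphi g_0 e_i$ and $y = \varphi g_0 e_j$ gives $\hoge<\varphi g_0 e_i, \varphi g_0 e_j> = c^{-2}\varepsilon_i\delta_{ij}$, so $\{\varphi g_0 e_1, \ldots, \varphi g_0 e_{p+q}\}$ is pseudo-orthonormal with respect to $k\hoge< , >$ with $k := c^2 > 0$; equivalently, one may rewrite the previous identity as $k\hoge< , > = (\varphi g_0).\hoge< , >_0$ and apply the dictionary of the first paragraph. This is the claimed conclusion.

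I do not expect any substantial obstacle; the content is entirely bookkeeping with the two group actions. The two points needing care are: (i) the inverse in $g.\hoge<x, y> = \hoge<g^{-1}x, g^{-1}y>$ and, relatedly, that a scalar $c \in \R^\times$ rescales an inner product by the factor $c^{-2}$ rather than $c^{-1}$ --- so the output constant is $k = c^2$, which is positive whatever the sign of $c$, and this is the only place where the ``scaling'' part of the hypothesis is actually used; and (ii) the identification $\LG \cong \R^{p+q}$ fixed at the outset, which is what realizes $\RAutg$ as a genuine subgroup of $\GL(p+q, \R)$ and makes the composition-of-actions step legitimate.
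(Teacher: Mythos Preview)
Your argument is correct: the whole statement really is bookkeeping once one records the dictionary $\{ge_i\}$ pseudo-orthonormal for $\hoge< , >$ $\Leftrightarrow$ $\hoge< , > = g.\hoge< , >_0$, and your handling of the scalar (so that $k=c^2>0$ regardless of the sign of $c$) is exactly the point that needs care. There is nothing to compare against here, since the paper does not supply its own proof of this theorem but quotes it from \cite{KOTT}; your unwinding of the definitions is the natural argument and is presumably what that reference contains.
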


If we know an expression of a set of representatives $\LLU$, then we can apply this theorem to a given Lie algebra, and obtain a pseudo-orthonormal basis. One can study properties of the inner product, such as the equivalence problem and curvature properties, in terms of this basis.

\section{Calculations of a set of representatives} 
\label{sec3} 

From now on, we consider left-invariant Lorentzian metrics on the Lie group $G:=H_3 \times \R^{n-3}$ with $n \geq 4$. For this purpose, we study its corresponding Lie algebra
\begin{align*}
\LG:=\LH_3 \oplus \R^{n-3}:=\s\{e_1, \ldots, e_n \mid [e_1, e_2]=e_n\},
\end{align*} 
where $\LH_3=\s\{e_1, e_2, e_n\}$ is the three dimensional Heisenberg Lie algebra. In this section, we calculate a set of representatives of the action of $\RAutg$ on $\LLM_{(n-1, 1)}(\LG)$. First of all, we recall a matrix expression of $\RAutg$.

\begin{Prop}[\cite{KTT}]
\label{prop : aut}
The matrix expression of $\RAutg$ with respect to a basis $\{e_1, \ldots, e_n\}$ of $\LG$ coincides with
\begin{align*}
\RAutg=\left\{\left(
		\begin{array}{cc|ccc|c}
		\ast&\ast&0&\cdots&0&0\\ 
		\ast&\ast&0&\cdots&0&0\\ \hline
		\ast&\ast&\ast&\cdots&\ast&0\\
		\vdots&\vdots&\vdots&\ddots&\vdots&\vdots\\ 
		\ast&\ast&\ast&\cdots&\ast&0\\ \hline
		\ast&\ast&\ast&\cdots&\ast&\ast
		\end{array}
	\right) \in \GL(n, \R)\right\}.
\end{align*}
\end{Prop}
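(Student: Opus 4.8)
The plan is to determine $\Aut(\LG)$ directly from the bracket structure of $\LG$ and then obtain $\R^\times\Aut(\LG)$ by absorbing scalars. The two relevant invariants are the commutator ideal and the center. A direct computation gives
\begin{align*}
[\LG, \LG] = \R e_n, \qquad Z(\LG) = \s\{e_3, \ldots, e_n\},
\end{align*}
since the only nonzero bracket among the basis vectors is $[e_1, e_2] = e_n$, the vectors $e_3, \ldots, e_{n-1}, e_n$ are central, and $e_1, e_2$ are not central.

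First I would show that every $\varphi \in \Aut(\LG)$, written as a matrix $A = (a_{ij})$ with respect to $\{e_1, \ldots, e_n\}$ (so that $\varphi(e_j) = \sum_i a_{ij} e_i$, i.e.\ columns are images), has the displayed block-triangular shape with block sizes $(2, n-3, 1)$. Indeed, $\varphi$ preserves $[\LG,\LG] = \R e_n$, so $\varphi(e_n) = a_{nn} e_n$ with $a_{nn} \neq 0$, which forces the last column of $A$ to vanish except in its bottom entry; and $\varphi$ preserves $Z(\LG) = \s\{e_3, \ldots, e_n\}$, so $\varphi(e_j) \in \s\{e_3, \ldots, e_n\}$ for $j \in \{3, \ldots, n-1\}$, i.e.\ $a_{1j} = a_{2j} = 0$. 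Together these produce exactly the two zero blocks in the upper-right corner. Expanding the one nontrivial relation,
\begin{align*}
a_{nn} e_n = \varphi([e_1, e_2]) = [\varphi(e_1), \varphi(e_2)] = (a_{11}a_{22} - a_{12}a_{21}) e_n,
\end{align*}
shows that the $(n,n)$-entry of $A$ equals the determinant of the upper-left $2 \times 2$ block of $A$. Every remaining relation $\varphi([e_i, e_j]) = [\varphi(e_i), \varphi(e_j)] = 0$ (for $\{i,j\} \neq \{1,2\}$) holds automatically, because at least one of $\varphi(e_i), \varphi(e_j)$ then lies in $\s\{e_3, \ldots, e_n\} \subset Z(\LG)$. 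Conversely, any invertible matrix of the displayed shape whose $(n,n)$-entry equals the determinant of its upper-left $2\times 2$ block satisfies all the bracket relations, hence lies in $\Aut(\LG)$. Thus $\Aut(\LG)$ is precisely the block-triangular group subject to this single determinant constraint.

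Finally I would pass to $\R^\times\Aut(\LG)$. Let $M$ be an invertible matrix of the shape displayed in the statement, with upper-left $2\times 2$ block $A_1$ (necessarily invertible) and $(n,n)$-entry $d \neq 0$, and put $c := \det(A_1)/d \in \R^\times$. Then $c^{-1}M$ has upper-left block $c^{-1}A_1$ and $(n,n)$-entry $c^{-1}d = \det(A_1)/c^{2} = \det(c^{-1}A_1)$, so $c^{-1}M \in \Aut(\LG)$ by the previous step, whence $M = c\,(c^{-1}M) \in \R^\times\Aut(\LG)$; the reverse inclusion is clear since rescaling preserves the block shape, giving the claimed equality. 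The argument is elementary throughout; the only point needing care is this last step, namely checking that multiplication by scalars exactly absorbs the determinant constraint on the $(n,n)$-entry, so that $\R^\times\Aut(\LG)$ is the \emph{whole} block-triangular group rather than a proper subgroup of it.
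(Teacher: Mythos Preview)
Your proof is correct and complete. The paper itself does not supply a proof of this proposition; it simply cites the result from \cite{KTT}, so there is no in-paper argument to compare against. Your direct computation of $[\LG,\LG]$ and $Z(\LG)$, the derivation of the block shape together with the determinant constraint $a_{nn}=\det A_1$ characterizing $\Aut(\LG)$, and the final scalar-absorption step showing that the constraint disappears in $\R^\times\Aut(\LG)$, are all sound and constitute exactly the kind of self-contained argument one would expect here.
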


\begin{Rem}
\label{rem : wolf}
In \cite{Wolf}, Wolf has obtained the finiteness of orbits of actions of parabolic subgroups on symmetric spaces of reductive type. Note that $\LLM_{(n-1, 1)}(\LG)$ is a pseudo-Riemannian symmetric space of reductive type, and the group $\RAutg$ is a parabolic subgroup of $\GL(n, \R)$ for $\LG:=\LH_3 \oplus \R^{n-3}$. It then follows that the number of orbits of this action is finite. The result of this section yields that there are at most six orbits. In Section~\ref{sec4}, we will show that the number of orbits is exactly six.
\end{Rem}

In order to make calculations slightly easier, let us consider the action of 
\begin{align*}
H'&:=\{\trans h \mid h \in \RAutg\} \nonumber\\
&=\left\{\left(
		\begin{array}{cc|ccc|c}
		\ast&\ast&\ast&\cdots&\ast&\ast\\ 
		\ast&\ast&\ast&\cdots&\ast&\ast\\ \hline
		0&0&\ast&\cdots&\ast&\ast\\
		\vdots&\vdots&\vdots&\ddots&\vdots&\vdots\\ 
		0&0&\ast&\cdots&\ast&\ast\\ \hline
		0&0&0&\cdots&0&\ast
		\end{array}
	\right) \in \GL(n, \R)\right\}.
\end{align*}
We will give a set of representatives of the action of $H'$ on $\LLM_{(n-1, 1)}(\LG)$. According to Lemma~\ref{lem : double coset}, we need to study the double cosets
\begin{align*}
[[g]]:=H'g \OO(n-1, 1).
\end{align*}
First of all, we divide the double cosets into three types.

\begin{Lem}
\label{lem : lambda=0, 1, 2}
Let $g \in \GL(n, \R)$. Then, there exists $\lambda \in \{0, 1, 2\}$ such that
\begin{align*}
\left(\begin{array}{cccc|c}
			\ast&\cdots&\cdots&\ast&0\\
			\vdots&\ddots&&\vdots&\vdots\\
			\vdots&&\ddots&\vdots&\vdots\\
			\ast&\cdots&\cdots&\ast&0\\ \hline
			-\lambda&0&\cdots&0&1
			\end{array}\right) \in [[g]].
\end{align*}
\end{Lem}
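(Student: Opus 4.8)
The plan is to use Lemma~\ref{lem : double coset} to reduce modulo the left action of $H'$ and the right action of $\OO(n-1, 1)$, and to exploit the specific block shape of $H'$ together with the $\OO(1,1)$-normalization of Lemma~\ref{lem : hyperbola}. Write $g \in \GL(n, \R)$ in block form according to the decomposition $(2, n-3, 1)$, so that the bottom row of $g$ is a vector $(a, b, c)$ with $a \in \R^2$, $b \in \R^{n-3}$, $c \in \R$. The bottom-left $(n-1) \times (n-1)$ part of $H'$ is the full group of invertible block-upper-triangular matrices of type $(2, n-3)$, and the bottom-right entry of $H'$ is an arbitrary nonzero scalar acting on the last coordinate. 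First I would observe that the last column of $I_{n-1,1}$ is the only negative-definite direction; I want to move the image of $e_n$ under $g$ to a normalized position inside the span of $e_1$ and $e_n$.

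The key steps, in order, are as follows. First, using right multiplication by $\OO(n-1,1)$: the bottom row $(a_1, a_2, b, c)$ of $g$, paired against $I_{n-1,1}$, has a definite sign for its Lorentzian square $|a|^2 + |b|^2 - c^2$ (or it is null). Apply an element of $\OO(n-2) \subset \OO(n-1,1)$ fixing $e_1, e_n$ to rotate the $(a_2, b)$-part into a single coordinate, then an $\OO(n-3)$-type rotation, reducing the bottom row to the form $(x, y, 0, \ldots, 0, z)$ supported on the first, second, and last coordinates only. Second, apply Lemma~\ref{lem : hyperbola} to the $\OO(1,1)$-block acting on coordinates $1$ and $n$: this brings $(x, z)$ — after possibly first using $\OO(n-2)$ once more to kill $y$, or handling the case where the $\OO(1,1)$-orbit already suffices — to the form $(-\lambda a', a')$ with $\lambda \in \{0, 1, 2\}$ and $a' > 0$. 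Third, use the bottom-right scalar of $H'$ (left action) to rescale the last coordinate so that the $(n,n)$-entry becomes exactly $1$; since $H'$ acts on the left, this scales the whole bottom row, turning $(-\lambda a', 0, \ldots, 0, a')$ into $(-\lambda, 0, \ldots, 0, 1)$. Finally, use the bottom-left block of $H'$ — which can realize arbitrary row operations adding multiples of the top $n-1$ rows and arbitrary invertible transformations among those rows — together with the remaining freedom in $\OO(n-1) \subset \OO(n-1,1)$ (the stabilizer of $e_n$) to clear the first $n-1$ rows into a generic $(\ast)$-block, arriving at the stated representative.

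The main obstacle I expect is the bookkeeping of which subgroups of $\OO(n-1,1)$ are still available at each stage, and in particular verifying that after the $\OO(1,1)$-normalization of the $(1,n)$-entries one has \emph{not} used up the $\OO(n-1)$-freedom needed to keep the other rows flexible — i.e. checking that the operations on the bottom row and on the upper rows can be decoupled. Concretely, one must be careful that the element of $\OO(n-1,1)$ used in Lemma~\ref{lem : hyperbola} acts only on coordinates $1$ and $n$ and thus commutes with the $\OO(n-2)$ and $\OO(n-3)$ rotations used earlier on coordinates $2, \ldots, n-1$; this is why the reduction of the bottom row to support $\{1, 2, n\}$, and then to support $\{1, n\}$, should be done \emph{before} invoking the hyperbola lemma. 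The case distinctions — whether the bottom row of $g$ is spacelike, timelike, or null, and the degenerate subcases where the relevant $2 \times 2$ minor vanishes — also need separate (but routine) treatment, and these are precisely what produces the three values $\lambda \in \{0, 1, 2\}$. Once the bottom row has the asserted shape, the remaining rows can be brought to arbitrary $(\ast)$-form by $H'$ acting on the left (it contains $\GL(2,\R) \times \GL(n-3,\R)$ in the upper-left blocks and arbitrary entries in the columns over the last coordinate), so no further normalization of those entries is claimed or needed.
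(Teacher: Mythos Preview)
Your proposal is correct and follows essentially the same route as the paper, but you overcomplicate the first step. The paper applies a single element of $\OO(n-1)\subset\OO(n-1,1)$ acting on coordinates $1,\ldots,n-1$ (fixing $e_n$) to rotate the first $n-1$ entries of the bottom row directly into the $e_1$-slot, obtaining bottom row $(x,0,\ldots,0,y)$ in one stroke. Your detour through $\OO(n-2)$ fixing $e_1,e_n$, then an ``$\OO(n-3)$-type rotation'', then another $\OO(n-2)$ to ``kill $y$'', is unnecessary --- and in fact the subgroup you name cannot move the entry in position $2$ into position $1$, since it fixes $e_1$; the rotation you actually need there is an $\OO(2)$ on coordinates $1,2$, which already sits inside the paper's $\OO(n-1)$.

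After that, the argument matches: Lemma~\ref{lem : hyperbola} on the $(1,n)$-coordinates produces $(-\lambda a,0,\ldots,0,a)$, and a single left multiplication by an element of $H'$ --- using the $(n,n)$-scalar $1/a$ to normalize the bottom row and the free entries in the last column of $H'$ to clear the last column by adding multiples of row $n$ to rows $1,\ldots,n-1$ --- yields the claimed form. Your worry about ``decoupling'' the bottom-row and upper-row operations is unfounded here, and no separate spacelike/null/timelike case analysis is needed beyond what Lemma~\ref{lem : hyperbola} already encapsulates.
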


\begin{proof}
The proof of this lemma is similar to the arguments in \cite{KOTT}. Take an arbitrary $g \in \GL(n, \R)$. First of all, one knows that there exists $\alpha \in \OO(n-1)$ such that 
\begin{align*}
[[g]] \ni g\left(\begin{array}{ccc|c}
			&&&0\\
			&\alpha&&\vdots\\
			&&&0\\ \hline
			0&\cdots&0&1
			\end{array}\right)=
		\left(\begin{array}{cccc|c}
			\ast&\cdots&\cdots&\ast&\ast\\
			\vdots&\ddots&&\vdots&\vdots\\
			\vdots&&\ddots&\vdots&\vdots\\
			\ast&\cdots&\cdots&\ast&\ast\\ \hline
			x&0&\cdots&0&y
			\end{array}\right)=:g_1.
\end{align*}
Here, note that $(x, y) \neq (0, 0)$ since $\det g_1 \neq 0$. Hence, by Lemma~\ref{lem : hyperbola}, one can change $(x, y)$ into a certain form by $\OO(1, 1)$. Since this $\OO(1, 1)$ can be seen naturally as a subgroup of $\OO(n-1, 1)$, we obtain that there exist $a>0$, $\lambda \in \{0, 1, 2\}$ and $k_1 \in \OO(n-1, 1)$ such that 
\begin{align*}
[[g]] \ni g_1k_1=\left(\begin{array}{cccc|c}
			\ast&\cdots&\cdots&\ast&a_n\\
			\vdots&\ddots&&\vdots&\vdots\\
			\vdots&&\ddots&\vdots&\vdots\\
			\ast&\cdots&\cdots&\ast&a_2\\ \hline
			-\lambda a&0&\cdots&0&a
			\end{array}\right)=:g_2,
\end{align*}
where $a_2, \ldots, a_n \in \R$. Since $a>0$, it follows from the definition of $H'$ that
\begin{align*}
[[g]] \ni \left(\begin{array}{cccc|c}
			a&&&0&-a_n\\
			&\ddots&&&\vdots\\
			&&\ddots&&\vdots\\
			0&&&a&-a_2\\ \hline
			0&\cdots&\cdots&0&1/a
			\end{array}\right)g_2=
	\left(\begin{array}{cccc|c}
			\ast&\cdots&\cdots&\ast&0\\
			\vdots&\ddots&&\vdots&\vdots\\
			\vdots&&\ddots&\vdots&\vdots\\
			\ast&\cdots&\cdots&\ast&0\\ \hline
			-\lambda&0&\cdots&0&1
			\end{array}\right),
\end{align*}
which completes the proof.
\end{proof}

According to this lemma, we define the subsets of $\GL(n, \R)$ as follows :
\begin{align*}
G_\lambda:=\left\{\left(\begin{array}{cccc|c}
			\ast&\cdots&\cdots&\ast&0\\
			\vdots&\ddots&&\vdots&\vdots\\
			\vdots&&\ddots&\vdots&\vdots\\
			\ast&\cdots&\cdots&\ast&0\\ \hline
			-\lambda&0&\cdots&0&1
			\end{array}\right) \in \GL(n, \R)\right\}.
\end{align*}
Then we have only to study the double cosets $[[g]]$ where $g \in G_\lambda$ with $\lambda \in \{0, 1, 2\}$. The first case is $\lambda=0$. For this case, every $g \in G_0$ gives the same double coset.

\begin{Prop}
\label{lem : lambda=0}
For every $g \in G_0$, we have $I_n \in [[g]]$.
\end{Prop}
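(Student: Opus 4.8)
The plan is to show that any $g \in G_0$ lies in the double coset $H' I_n \OO(n-1,1) = H' \cdot \OO(n-1,1)$, i.e. that $g$ factors as a product of an element of $H'$ and an element of $\OO(n-1,1)$. Write $g$ in the block form dictated by the decomposition $(2, n-3, 1)$ of the rows and columns. Since $g \in G_0$, the last column of $g$ is $\trans(0, \ldots, 0, 1)$ and the last row has the form $(0, \ldots, 0, 1)$ as well, because $\lambda = 0$. Thus $g$ is block upper-triangular with respect to the splitting $\R^n = \R^{n-1} \oplus \R$, the bottom-right entry being $1$, the top-right $(n-1)\times 1$ block being zero, and the bottom-left $1 \times (n-1)$ block being zero; the top-left block is an arbitrary $h \in \GL(n-1, \R)$.

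First I would reduce to an action purely inside $\GL(n-1,\R)$: the bottom $\R$-factor is fixed, and the splitting $\R^{n-1} \oplus \R$ is preserved both by the relevant elements of $H'$ (those of block form $\mathrm{diag}(\text{upper block}, \ast)$ — note $H'$ contains all matrices that are block upper triangular with the $2\times 2$, $(n-3)$, and $1$ blocks, and in particular those with zero in the first two entries of the last column, arbitrary $(n-1)\times(n-1)$ ``parabolic'' corner and $\ast$ in the corner) and by $\OO(n-1,1) \supset \OO(n-1) \times \{1\}$. So it suffices to show: for every $h \in \GL(n-1,\R)$ there exist an element $p$ of the parabolic subgroup $P \subset \GL(n-1,\R)$ of block-upper-triangular matrices with blocks $(2, n-3)$ — sitting inside $H'$ — and some $\alpha \in \OO(n-1)$ with $h = p\,\alpha$. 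This is exactly a $PK$-type decomposition, $\GL(n-1,\R) = P \cdot \OO(n-1)$, which holds because $P$ contains the full lower-triangular group in a suitable basis, or more directly because $\OO(n-1)$ acts transitively on the relevant flag variety $\GL(n-1,\R)/P$; equivalently one applies Gram--Schmidt / the Iwasawa-type decomposition to $h$.

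Concretely I would argue as follows. Apply $\OO(n-1)$ on the right of $h$ (equivalently, conjugate the column span by an orthogonal frame change) to make the $(n-3)\times 2$ lower-left block of $h$ vanish and, in fact, to put $h$ into block-upper-triangular form with respect to $(2, n-3)$; this is possible since $\OO(n-1)$ acts transitively on the partial flags of type $(2, n-1)$ in $\R^{n-1}$, and the stabilizer of the standard such flag is exactly $(\OO(2)\times\OO(n-3)) \ltimes (\text{unipotent})$, contained in $P$. After this right multiplication the resulting matrix lies in $P$, hence in $H'$; tracking the bottom $\R$-factor, the full $n\times n$ matrix obtained from $g$ by right multiplication by the corresponding element of $\OO(n-1) \times \{1\} \subset \OO(n-1,1)$ lies in $H'$. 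Therefore $g \in H' \OO(n-1,1)$, i.e. $I_n \in [[g]]$, as claimed.

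The main obstacle is purely bookkeeping rather than conceptual: one must check that the element of $\GL(n-1,\R)$ used to clear the lower-left block, once padded by $1$ in the bottom-right corner, genuinely sits in $H'$ (its first two rows may be arbitrary, the next $n-3$ rows must have zeros in the first two columns, and the last row must be $(0,\ldots,0,\ast)$ — all consistent with block-upper-triangularity), and that the orthogonal frame change used lies in $\OO(n-1) \subset \OO(n-1,1)$ rather than spilling into the noncompact part. Both are routine once the block structure is written out, mirroring the $\lambda = 0$ computation in \cite{KOTT}; no hard estimate or case analysis is needed, in contrast to the $\lambda = 1, 2$ cases that follow.
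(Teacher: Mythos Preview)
Your proposal is correct and follows essentially the same approach as the paper. Both arguments observe that an element of $G_0$ has the block form $\mathrm{diag}(\alpha_1,1)$ with $\alpha_1\in\GL(n-1,\R)$, and then factor $\alpha_1$ through a QR/Iwasawa-type decomposition so that the right factor lies in $\OO(n-1)\subset\OO(n-1,1)$ and the left factor, padded by $1$, lies in $H'$. The only cosmetic difference is that the paper makes $\alpha_1\alpha_2$ \emph{fully} upper triangular (ordinary QR), whereas you stop at block-upper-triangular with blocks $(2,n-3)$, which is already enough to land in $H'$; your aside about the stabilizer ``$(\OO(2)\times\OO(n-3))\ltimes(\text{unipotent})$'' is a bit garbled (the stabilizer in $\OO(n-1)$ is just $\OO(2)\times\OO(n-3)$), but it does not affect the argument.
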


\begin{proof}
Take any $g \in G_0$, and denote it as
\begin{align*}
g=\left(\begin{array}{ccc|c}
			&&&0\\
			&\alpha_1&&\vdots\\
			&&&0\\ \hline
			0&\cdots&0&1
			\end{array}\right),
\end{align*}
where $\alpha_1 \in \GL(n-1, \R)$. Then there exists $\alpha_2 \in \OO(n-1)$ such that $\alpha_1\alpha_2$ is upper triangular. We note that
\begin{align*}
[[g]] \ni g\left(\begin{array}{ccc|c}
			&&&0\\
			&\alpha_2&&\vdots\\
			&&&0\\ \hline
			0&\cdots&0&1
			\end{array}\right)=
\left(\begin{array}{ccc|c}
			&&&0\\
			&\alpha_1\alpha_2&&\vdots\\
			&&&0\\ \hline
			0&\cdots&0&1
			\end{array}\right)=:g_1.
\end{align*}
By definition, one knows $g_1 \in H'$. Since $g_1^{-1} \in H'$, we obtain
\begin{align*}
[[g]] \ni g_1^{-1}g_1=I_n,
\end{align*}
which completes the proof.
\end{proof}

Before we consider the remaining cases $\lambda=1, 2$, we prove the next lemma. It will be used for the both cases. 

\begin{Lem}
\label{lem : procedure of lambda=1, 2}
Let $g \in G_\lambda$. Then there exists $t \in \R$ such that
\begin{align*}
\left(\begin{array}{c|cccccc}
	1&0&\cdots&\cdots&\cdots&\cdots&0\\ \hline
	0&&&&&&\\
	t&&&&&&\\
	0&&&\multicolumn{2}{c}{\multirow{2}{*}{$I_{n-1}$}}&&\\
	\vdots&&&&&&\\
	0&&&&&&\\
	-\lambda&&&&&&
	\end{array}\right) \in [[g]]. 
\end{align*}
\end{Lem}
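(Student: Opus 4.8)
The plan is to start from an arbitrary $g \in G_\lambda$, written in the $(2, n-3, 1)$-block form with bottom row $(-\lambda, 0, \ldots, 0, 1)$, and to successively multiply on the left by elements of $H'$ and on the right by elements of $\OO(n-1, 1)$ so as to normalize the first column. The key observation is that the target matrix in the statement is characterized by: its first column is $\trans(1, 0, t, 0, \ldots, 0, -\lambda)$ for some $t \in \R$, its bottom row is $(1, 0, \ldots, 0)$ shifted appropriately (i.e.\ the $(n,1)$-entry is $-\lambda$ and the rest of the bottom row is zero except possibly position $1$), and the lower-right $(n-1)$-block is $I_{n-1}$ with a shift. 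So the strategy is first to use right multiplication by $\OO(n-1)\subset\OO(n-1,1)$ (the block fixing the last coordinate) to simplify, then to use the freedom in $H'$ — whose $(2) \oplus (n-3) \oplus (1)$ block-lower-triangular shape lets us do row operations within and between the blocks — to clear entries, and finally to read off the constraint that the $\OO(1,1)$-normalization already imposed on the bottom row.

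Concretely, I would proceed as follows. First, since $g \in G_\lambda$, its upper-left $(n-1)\times(n-1)$ minor $\alpha$ (obtained by deleting the last row and column) is invertible because $\det g = \pm\det\alpha \neq 0$. I want to use the $\OO(n-1)$-action on the right (acting on columns $1,\dots,n-1$, fixing column $n$, which indeed sits inside $\OO(n-1,1)$) together with row operations from $H'$ to bring the top-left block toward a normal form. The precise bookkeeping: multiplying on the right by a suitable rotation in $\OO(n-1)$ lets me assume the first column of $g$ (restricted to its first $n-1$ entries) is a multiple of a chosen unit vector; I would arrange it to have a nonzero entry only in rows $1$ and $3$ (the first row of the first block and the first row of the middle block), which is exactly the shape $\trans(1,0,t,0,\ldots,0)$ after further scaling. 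Here one must be slightly careful that the rotation used does not disturb the already-normalized bottom row $(-\lambda,0,\dots,0,1)$ — but $\OO(n-1)$ acting on columns $1,\dots,n-1$ does change the bottom-row entries in those columns, so instead I should do the column normalization \emph{before} invoking Lemma~\ref{lem : lambda=0, 1, 2}'s output form, or re-clear the bottom row afterward using $H'$. The cleanest route is: take $g\in G_\lambda$; multiply on the left by a block-diagonal element of $H'$ and on the right by $\OO(n-1)$ to put the top-left $(n-1)$-block into the form where its first column is $\trans(1,0,t,0,\dots,0)$ and the remaining $(n-1)\times(n-2)$ part is whatever it is; then multiply on the left by a further element of $H'$ (using its full block-lower-triangular freedom, which acts as arbitrary invertible transformations within the $(2)$ and $(n-3)$ diagonal blocks and arbitrary additions downward) to turn the lower-right portion into $I_{n-1}$, as in the $\lambda=0$ argument of Proposition~\ref{lem : lambda=0}; and finally re-examine the bottom row, which the $\OO(1,1)$-step has pinned to $(-\lambda,0,\dots,0,1)$ and which the subsequent $H'$-operations preserve because the relevant block of $H'$ acts trivially on the last coordinate. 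This yields precisely the displayed matrix, with the single surviving parameter $t$ in the $(3,1)$-entry.

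I expect the main obstacle to be the careful interleaving of the left-$H'$ and right-$\OO(n-1,1)$ operations so that normalizing the first column and normalizing the lower-right $I_{n-1}$ block do not undo each other, and in particular verifying that the bottom row stays in the form $(-\lambda, 0, \dots, 0, 1)$ throughout — this is the step where the asymmetric $(2,n-3,1)$ block structure of $H'$ really matters, and where one has to check that the only genuinely free entry left over is the single $t$. The rest is routine linear algebra of the same flavor as Lemma~\ref{lem : lambda=0, 1, 2} and Proposition~\ref{lem : lambda=0}.
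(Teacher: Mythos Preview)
Your sketch has two concrete errors and misses the main obstruction.

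First, you have the block structure of $H'$ backwards. Since $H' = \{\trans h : h \in \R^\times\Aut(\LG)\}$ and $\R^\times\Aut(\LG)$ is block \emph{lower}-triangular in the $(2,n-3,1)$ shape, $H'$ is block \emph{upper}-triangular. Hence left multiplication by $H'$ lets you add lower rows to upper rows, not ``arbitrary additions downward''. In particular, left-$H'$ acts on the last row only by a scalar, so your proposal to ``re-clear the bottom row afterward using $H'$'' cannot work: once a right $\OO(n-1)$-rotation on columns $1,\dots,n-1$ disturbs the bottom row $(-\lambda,0,\dots,0,1)$, $H'$ cannot restore it. Relatedly, right multiplication acts on columns, so ``multiplying on the right by a suitable rotation in $\OO(n-1)$ lets me assume the first column \dots\ is a multiple of a chosen unit vector'' is not the operation you think it is.

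Second, and more seriously, your outline never confronts the genuine obstruction that occupies most of the paper's proof. After using left $\OO(n-3)\subset H'$ to put the first column in the shape $(\ast,\ast,\ast,0,\dots,0,-\lambda)$ and then iteratively clearing columns $n-1,n-2,\dots$ with right $\OO(n-2)$-rotations (which fix column $1$ and the last row) and left-$H'$ sweeps, one is reduced to a $4\times 4$ core
\[
\begin{pmatrix} \ast & \ast & \ast & 0\\ \ast & \ast & \ast & 0\\ y & 0 & x & 0\\ -\lambda & 0 & 0 & 1\end{pmatrix}.
\]
If $x\neq 0$ the rest is routine. But if $x=0$, no combination of left-$H'$ moves and right rotations in $\OO(3)\subset\OO(3,1)$ will produce a nonzero $(3,3)$-entry without destroying the bottom row; the paper must invoke an explicit Lorentz boost $k_1\in\OO(3,1)$ mixing the spacelike and timelike directions to manufacture a nonzero entry there, and then re-clear with $H'$. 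Your proposal, which only ever uses $\OO(n-1)\subset\OO(n-1,1)$ on the right, has no mechanism for this step, and the vague hope that ``careful interleaving'' handles it is exactly where the argument would fail.
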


\begin{proof}
Take any $g \in G_\lambda$. First of all, we convert the first column vector of $g$. There exists $\alpha_1 \in \OO(n-3)$ such that 
\begin{align*}
[[g]] \ni 
\left(\begin{array}{cc|ccc|c}
	1&0&0&\cdots&0&0\\
	0&1&0&\cdots&0&0\\ \hline
	0&0&&&&0\\
	\vdots&\vdots&&\alpha_1&&\vdots\\
	0&0&&&&0\\ \hline
	0&0&0&\cdots&0&1\\
	\end{array}\right)g=
\left(\begin{array}{cc|cccc|c}
	\ast&\ast&\ast&\cdots&\cdots&\ast&0\\
	\ast&\ast&\ast&\cdots&\cdots&\ast&0\\ \hline
	\ast&\ast&\ast&\cdots&\cdots&\ast&0\\
	0&\ast&\vdots&\ddots&&\vdots&\vdots\\
	\vdots&\vdots&\vdots&&\ddots&\vdots&0\\ 
	0&\ast&\ast&\cdots&\cdots&\ast&0\\ \hline
	-\lambda&0&0&\cdots&0&0&1\\
	\end{array}\right)=:g_1.
\end{align*}
Here we look at the $(n-2, n-2)$-submatrix of $g_1$ in the middle. Then there exists $\alpha_2 \in \OO(n-2)$ such that 
\begin{align*}
[[g]] \ni g_1
\left(\begin{array}{c|ccc|c}
1&0&\cdots&0&0\\ \hline
0&&&&0\\
\vdots&&\alpha_2&&\vdots\\
0&&&&0\\ \hline
0&0&\cdots&0&1
\end{array}\right)=
\left(\begin{array}{c|ccccc|c}
	\ast&\ast&\cdots&\cdots&\ast&a_n&0\\ \hline
	\ast&\ast&\cdots&\cdots&\ast&a_{n-1}&0\\ 
	\ast&0&\ddots&&\vdots&\vdots&\vdots\\
	0&\vdots&&\ddots&\vdots&\vdots&\vdots\\
	\vdots&\vdots&&&\ast&a_3&\vdots\\ 
	0&0&\cdots&\cdots&0&a&0\\ \hline
	-\lambda&0&0&\cdots&0&0&1\\
	\end{array}\right)=:g_2,
\end{align*}
where $a_3, \ldots, a_n \in \R$. One knows $a \neq 0$ since $\det(g_2) \neq 0$. Then it follows from the definition of $H'$ that
\begin{align}
\label{matrix : h_1}
h_1:=\left(\begin{array}{cc|ccc|c|c}
	1&0&0&\cdots&0&-a_n/a&0\\
	0&1&0&\cdots&0&-a_{n-1}/a&0\\ \hline
	0&0&1&&0&-a_{n-2}/a&0\\
	\vdots&\vdots&&\ddots&&\vdots&\vdots\\
	0&0&0&&1&-a_3/a&0\\ \hline
	0&0&0&\cdots&0&1/a&0\\ \hline
	0&0&0&\cdots&0&0&1\\
	\end{array}\right) \in H'.
\end{align}
By multiplying this matrix, one can directly see that
\begin{align*}
[[g]] \ni h_1g_2=
\left(\begin{array}{c|ccccc|c}
	\ast&\ast&\cdots&\cdots&\ast&0&0\\ \hline
	\ast&\ast&\cdots&\cdots&\ast&0&0\\ 
	\ast&0&\ddots&&\vdots&\vdots&\vdots\\
	0&\vdots&&\ddots&\vdots&\vdots&\vdots\\
	\vdots&\vdots&&&\ast&0&\vdots\\ 
	0&0&\cdots&\cdots&0&1&0\\ \hline
	-\lambda&0&0&\cdots&0&0&1\\
	\end{array}\right)=:g_3.
\end{align*}
By repeating the same procedure, one can see that there exists $h_2 \in H'$ such that
\begin{align*}
[[g]] \ni h_2g_3=
\left(\begin{array}{ccc|cccc}
	\ast&\ast&\ast&0&\cdots&\cdots&0\\
	\ast&\ast&\ast&0&\cdots&\cdots&0\\
	y&0&x&0&\cdots&\cdots&0\\ \hline
	0&0&0&1&&&0\\ 
	\vdots&\vdots&\vdots&&\ddots&&\\
	0&0&0&&&\ddots&\\
	-\lambda&0&0&0&&&1
\end{array}\right)=:g_4,
\end{align*}
where $x, y \in \R$. Therefore, in order to prove the lemma, we have only to consider the case of $n=4$, that is, 
\begin{align}
\label{matrix : x, y}
g_4':=\left(\begin{array}{ccc|c}
		\ast&\ast&\ast&0\\ 
		\ast&\ast&\ast&0\\
		y&0&x&0\\ \hline
		-\lambda&0&0&1
	\end{array}\right),
\end{align}
since the remaining blocks of $g_4$ do not have to be changed throughout the following calculations. 

We here show that one can assume $x \neq 0$ without loss of generality. In order to prove this, assume that $x=0$. Note that $y \neq 0$ holds, since $\det g_4' \neq 0$. Let us put
\begin{align*}
k_1:=\left(\begin{array}{ccc|c}
	0&0&\sqrt{\lambda^2+1}&\lambda\\
	0&1&0&0\\
	-\sqrt{\lambda^2+1}&0&\lambda^2&\lambda\sqrt{\lambda^2+1}\\ \hline
	-\lambda&0&\lambda\sqrt{\lambda^2+1}&\lambda^2+1
\end{array}\right) \in \OO(3, 1).
\end{align*}
One thus has
\begin{align*}
[[g]] \ni g_4'k_1=
\left(\begin{array}{ccc|c}
	\ast&\ast&\ast&\ast\\
	\ast&\ast&\ast&\ast\\
	0&0&y\sqrt{\lambda^2+1}&y\lambda\\ \hline
	-\lambda&0&0&1
\end{array}\right)=:g_5'.
\end{align*}
Here we can take $h_3 \in H'$ similar to (\ref{matrix : h_1}) such that
\begin{align*}
[[g]] \ni h_3g_5'= 
\left(\begin{array}{ccc|c}
	\ast&\ast&\ast&0\\
	\ast&\ast&\ast&0\\
	y\lambda^2&0&y\sqrt{\lambda^2+1}&0\\ \hline
	-\lambda&0&0&1
\end{array}\right)=:g_6'.
\end{align*} 
Note that $g_6'$ is of the same form as $g_4'$ obtained in (\ref{matrix : x, y}), and one knows $y\sqrt{\lambda^2+1} \neq 0$ since $y \neq 0$. This completes the proof of the claim, that is, in (\ref{matrix : x, y}) we can assume $x \neq 0$ without loss of generality. 

Now we consider $g_4'$ with $x \neq 0$. We can again take $h_4 \in H'$ similar to (\ref{matrix : h_1}) such that
\begin{align*}
[[g]]\ni h_4g_4'=\left(\begin{array}{ccc|c}
	b_1&b_2&0&0\\
	b_3&b_4&0&0\\ 
	\ast&0&1&0\\ \hline
	-\lambda&0&0&1
	\end{array}\right)=:g_5,
\end{align*}
where $b_1, b_2, b_3, b_4 \in \R$. Since $0 \neq \det g_5=b_1b_4-b_2b_3$, we can take
\begin{align*}
A:=\left(\begin{array}{cc}
	b_1&b_2\\
	b_3&b_4
	\end{array}\right)^{-1}, \quad 
h_5:=\left(\begin{array}{c|c}
	A&0\\ \hline
	0&I_2
	\end{array}\right) \in H'.
\end{align*}
We thus obtain the desired matrix $h_5g_5 \in [[g]]$. This completes the proof.
\end{proof}

For the latter arguments, we here modify the matrix given in Lemma~\ref{lem : procedure of lambda=1, 2}. Let us consider 
\begin{align*}
g:=\left(\begin{array}{c|cccccc}
	1&0&\cdots&\cdots&\cdots&\cdots&0\\ \hline
	0&&&&&&\\
	t&&&&&&\\
	0&&&\multicolumn{2}{c}{\multirow{2}{*}{$I_{n-1}$}}&&\\
	\vdots&&&&&&\\
	0&&&&&&\\
	-\lambda&&&&&&
	\end{array}\right) \in G_\lambda. 
\end{align*}
Then there exists $\alpha \in \OO(n-3)$ which maps $\trans (t, 0, \ldots, 0)$ to $\trans (0, \ldots, 0, |t|)$. Therefore we obtain 
\begin{align}
\begin{split}
\label{matrix |t|}
[[g]] &\ni \left(\begin{array}{cc|ccc|c}
	1&0&0&\cdots&0&0\\
	0&1&0&\cdots&0&0\\ \hline
	0&0&&&&0\\
	\vdots&\vdots&&\alpha&&\vdots\\
	0&0&&&&0\\ \hline
	0&0&0&\cdots&0&1\\
	\end{array}\right)g
\left(\begin{array}{cc|ccc|c}
	1&0&0&\cdots&0&0\\
	0&1&0&\cdots&0&0\\ \hline
	0&0&&&&0\\
	\vdots&\vdots&&\alpha^{-1}&&\vdots\\
	0&0&&&&0\\ \hline
	0&0&0&\cdots&0&1\\
	\end{array}\right)\\ 
&=\left(\begin{array}{c|ccccc}
	1&0&\cdots&\cdots&\cdots&0\\ \hline
	0&&&&&\\
	\vdots&&&&&\\
	0&&&I_{n-1}&&\\
	|t|&&&&&\\
	-\lambda&&&&&
	\end{array}\right).
\end{split}
\end{align}

We need to study the double cosets $[[g]]$ for $g \in G_\lambda$ with $\lambda \in \{1, 2\}$. We here study the case of $\lambda=1$. In this case there are two possibilities.

\begin{Prop}
\label{lem : lambda=1}
Let $g \in G_1$. Then there exists $\xi \in \{0, 1\}$ such that 
\begin{align*}
\left(
		\begin{array}{cccc}
		1&&&\\ 
		&\ddots&&\\ 
		\xi&&1&\\ 
		-1&&&1
		\end{array}
	\right) \in [[g]].
\end{align*}
\end{Prop}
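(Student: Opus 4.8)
The plan is to reduce, by means of Lemma~\ref{lem : procedure of lambda=1, 2}, to a one-parameter family of matrices, and then to normalise the remaining parameter by an explicit element of $H'\times\OO(n-1,1)$. Specialising Lemma~\ref{lem : procedure of lambda=1, 2} and the subsequent reduction \eqref{matrix |t|} to the case $\lambda=1$, one gets that for every $g\in G_1$ there exists a real number $s:=|t|\ge 0$ such that $M_s\in [[g]]$, where $M_s$ denotes the $n\times n$ matrix all of whose diagonal entries are $1$, whose $(n-1,1)$-entry is $s$, whose $(n,1)$-entry is $-1$, and all of whose remaining entries vanish. If $s=0$, then $M_0$ is already the matrix asserted in the statement with $\xi=0$. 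Hence it remains to treat the case $s>0$ and to show that then $M_s$ lies in the double coset of the matrix with $\xi=1$.

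So assume $s>0$. I would act on $M_s$ on the right and on the left by two group elements that are nontrivial only on the coordinates $e_1,e_{n-1},e_n$ and fix $e_2,\dots,e_{n-2}$. The right factor is the element $k_\theta\in\OO(n-1,1)$ determined by $k_\theta e_1=(\cosh\theta)e_1+(\sinh\theta)e_n$, $k_\theta e_n=(\sinh\theta)e_1+(\cosh\theta)e_n$, and $k_\theta e_j=e_j$ otherwise; it preserves $\hoge< , >_0$ because $\hoge<e_1,e_1>_0=1$, $\hoge<e_n,e_n>_0=-1$ and $\hoge<e_1,e_n>_0=0$, so indeed $k_\theta\in\OO(n-1,1)$. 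Right multiplication by $k_\theta$ alters only the first and the last columns of $M_s$ and keeps their entries inside $\s\{e_1,e_{n-1},e_n\}$. The left factor is an element $h_\theta\in H'$ that fixes $e_2,\dots,e_{n-1}$ and whose matrix on $\s\{e_1,e_{n-1},e_n\}$, with respect to the ordered basis $(e_1,e_{n-1},e_n)$, is upper triangular; a short $3\times 3$ computation on the coordinates $e_1,e_{n-1},e_n$ (the other coordinates remaining inert) shows that $h_\theta$ can be chosen so that $h_\theta M_s k_\theta$ has $1$'s on the diagonal, $-1$ in the $(n,1)$-entry, $s\,e^{\theta}$ in the $(n-1,1)$-entry, and $0$ elsewhere. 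Choosing $\theta=-\log s$ makes the $(n-1,1)$-entry equal to $1$, which finishes the proof. The one point one must be careful about is that this $h_\theta$ really lies in $H'$: from the explicit form of $H'$, the only non-obvious requirement is that the first two entries of its $(n-1)$-st row vanish, which holds since that row is supported on the columns $n-1$ and $n$, both of which are $\ge 3$ because $n\ge 4$.

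I expect the search for the pair $(h_\theta,k_\theta)$ to be the only genuinely delicate step. The naive alternatives fail: a boost in the plane $\s\{e_{n-1},e_n\}$, or a plain diagonal rescaling of $M_s$, would force one to restore the last column by means of an element outside the rigid group $H'$, whose last row is constrained to have the shape $(0,\dots,0,\ast)$. It is also worth remarking why the parameter reduces to $\{0,1\}$ and not to $\{0,1,2\}$, as the analogy with Lemma~\ref{lem : hyperbola} might suggest: the group $H'$ contains genuine rescalings, so a single boost can scale any $s>0$ down to $1$, and only the value $s=0$ produces a separate double coset. Once the ansatz is fixed, what remains is a routine matrix multiplication.
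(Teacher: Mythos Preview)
Your argument is correct. The key observation that makes your boost work is implicit but worth stating: the last row $(-1,0,\dots,0,1)$ of $M_s$ spans a null line for $\langle\,,\,\rangle_0$, and the boost $k_\theta$ in the $(e_1,e_n)$-plane acts on that line by the scalar $e^{-\theta}$. This is exactly why, after applying $h_\theta$, the bottom row is restored to $(-1,0,\dots,0,1)$ while the $(n-1,1)$-entry gets rescaled by $e^{\theta}$.

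The paper takes a different route. Instead of a hyperbolic boost, it multiplies on the right by the unipotent element
\[
k_1=\begin{pmatrix}1-s^2/2&0&s&s^2/2\\0&1&0&0\\-s&0&1&s\\-s^2/2&0&s&1+s^2/2\end{pmatrix}\in\OO(3,1)
\]
(written in the $n=4$ reduction) with the specific choice $s=(t-1)/t$, and then applies two upper-triangular corrections in $H'$. The last row is again preserved because this $k_1$ is a null rotation fixing the null direction $e_1+e_4$. Your approach is more transparent for $\lambda=1$: one sees immediately that any $s>0$ can be rescaled to $1$ by choosing $\theta=-\log s$, and the required $h_\theta$ is found by a routine back-substitution. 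The trade-off is that your boost trick is special to $\lambda=1$; for $\lambda=2$ the last row $(-2,0,\dots,0,1)$ is no longer null, a boost in $\s\{e_1,e_n\}$ does not preserve it up to scale, and one is forced into something closer to the paper's method (cf.\ Proposition~\ref{lem : lambda=2}). So your argument is a cleaner proof of this proposition, while the paper's choice of a null rotation is in the same spirit as what is needed for the harder $\lambda=2$ case.
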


\begin{proof}
Take any $g \in G_1$. Then by (\ref{matrix |t|}), there exists $t \geq 0$ such that $g$ can be turned into
\begin{align*}
g_1:=\left(\begin{array}{cc|c|cc}
	1&0&&0&0\\ 
	0&1&&0&0\\ \hline
	&&I_{n-4}&&\\ \hline
	t&0&&1&0\\
	-1&0&&0&1
	\end{array}\right) \in [[g]].
\end{align*}
If $t=0$, then it corresponds to the case of $\xi=0$. Hence we have only to consider the case of $t>0$. Furthermore, we have only to consider the case of $n=4$, that is, 
\begin{align*}
g_2:=\left(\begin{array}{cc|cc}
		1&0&0&0\\ 
		0&1&0&0\\ \hline
		t&0&1&0\\ 
		-1&0&0&1
		\end{array}
	\right) \in [[g]],
\end{align*}
since $g_2$ consists of the four $(2 \times 2)$-blocks of the four corners of $g_1$, and the remaining blocks of $g_1$ do not have to be changed.

We show that $[[g_2]]$ contains the matrix in the claim with $\xi=1$. Let us put  
\begin{align*}
s:=(t-1)/t, \quad
k_1:=\left(\begin{array}{cc|cc}
	1-s^2/2&0&s&s^2/2\\ 
	0&1&0&0\\ \hline
	-s&0&1&s\\ 
	-s^2/2&0&s&1+s^2/2
	\end{array}\right) \in \OO(3, 1).
\end{align*}
Then a direct calculation yields that
\begin{align*}
[[g]] \ni g_2k_1=\left(\begin{array}{cc|cc}
	1-s^2/2&0&s&s^2/2\\ 
	0&1&0&0\\ \hline
	t-s^2t/2-s&0&1+st&s^2t/2+s\\ 
	-1&0&0&1
	\end{array}\right)=:g_3.
\end{align*}
We next take 
\begin{align*}
h_1:=\left(\begin{array}{cc|cc}
	1&0&0&-s^2/2\\ 
	0&1&0&0\\ \hline
	0&0&1&-s^2t/2-s\\ 
	0&0&0&1
	\end{array}\right) \in H'.
\end{align*}
Then one obtains
\begin{align*}
[[g]] \ni h_1g_3
=\left(\begin{array}{cc|cc}
	1&0&s&0\\ 
	0&1&0&0\\ \hline
	t&0&1+st&0\\ 
	-1&0&0&1
	\end{array}\right)=:g_4.
\end{align*}
We here note that
\begin{align*}
1+st=1+t-1=t \neq 0.
\end{align*}
Hence one can take
\begin{align*}
h_2:=\left(\begin{array}{cc|cc}
	t&0&-s&0\\ 
	0&1&0&0\\ \hline
	0&0&1/t&0\\ 
	0&0&0&1
	\end{array}\right) \in H'.
\end{align*}
We then consider $h_2g_4 \in [[g]]$, and a direct calculation yields that this corresponds to the desired matrix with $\xi=1$.
\end{proof}

The last case is $\lambda=2$, that is, we study the double cosets $[[g]]$ for $g \in G_2$. In this case there are three possibilities.

\begin{Prop}
\label{lem : lambda=2}
Let $g \in G_2$. Then there exists $\xi \in \{0, \sqrt{3}, 2\}$ such that 
\begin{align*}
\left(
		\begin{array}{cccc}
		1&&&\\ 
		&\ddots&&\\ 
		\xi&&1&\\ 
		-2&&&1
		\end{array}
	\right) \in [[g]].
\end{align*}
\end{Prop}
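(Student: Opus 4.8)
The plan is to follow the same strategy as in the proof of Proposition~\ref{lem : lambda=1}, reducing to the $n=4$ case and then explicitly producing, for each possible value of the residual parameter $t \geq 0$, one of the three normal forms $\xi \in \{0, \sqrt{3}, 2\}$. First I would apply (\ref{matrix |t|}) together with Lemma~\ref{lem : procedure of lambda=1, 2} to conclude that every $g \in G_2$ has in its double coset $[[g]]$ a matrix of the shape
\begin{align*}
g_1:=\left(\begin{array}{cc|c|cc}
	1&0&&0&0\\
	0&1&&0&0\\ \hline
	&&I_{n-4}&&\\ \hline
	t&0&&1&0\\
	-2&0&&0&1
	\end{array}\right) \in [[g]]
\end{align*}
with $t \geq 0$, and since the middle $I_{n-4}$ block and the off-diagonal zero blocks never get touched, it suffices to treat the $4 \times 4$ corner matrix
\begin{align*}
g_2:=\left(\begin{array}{cc|cc}
		1&0&0&0\\
		0&1&0&0\\ \hline
		t&0&1&0\\
		-2&0&0&1
		\end{array}\right) \in [[g]].
\end{align*}
The case $t=0$ already gives $\xi=0$, so the work is entirely in the range $t>0$.

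Next I would analyze how the parameter $t>0$ transforms under the available moves: right multiplication by elements of $\OO(3,1)$ fixing the $\hoge< , >_0$ of signature $(3,1)$, and left multiplication by the block-triangular group $H'$. As in Proposition~\ref{lem : lambda=1}, the key is to choose a one-parameter family in $\OO(3,1)$ — a ``boost'' in the plane spanned by the first and last coordinates, of the form
\begin{align*}
k_s:=\left(\begin{array}{cc|cc}
	1-s^2/2&0&s&s^2/2\\
	0&1&0&0\\ \hline
	-s&0&1&s\\
	-s^2/2&0&s&1+s^2/2
	\end{array}\right) \in \OO(3, 1),
\end{align*}
and compute $g_2 k_s$. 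After then clearing the unwanted entries in the last column by a suitable $h \in H'$ (just as $h_1$ does in the proof of Proposition~\ref{lem : lambda=1}), one is left with a matrix whose $(3,3)$ and $(3,1)$ entries depend on $t$ and $s$; the $(3,3)$-entry will be something like $1+st$ and must be nonzero to rescale the first two rows by an element of $H'$ back to the normal form with parameter $\xi$. The upshot is a map $t \mapsto \xi(t,s)$, and I would then show: (i) for all sufficiently ``generic'' $t>0$ one can solve for $s$ so that $\xi$ becomes $2$; (ii) there is exactly one exceptional value of $t>0$ (for $\lambda=2$ this should be the value making $1+st=0$ unavoidable, i.e. a critical point of the orbit structure) at which the reduction to $\xi=2$ fails and one instead lands on $\xi=\sqrt{3}$; this is why three normal forms appear rather than two. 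Concretely I expect the threshold to be related to when the discriminant-type quantity $\lambda^2 - \xi^2 + \text{(something)}$ changes sign, and with $\lambda=2$ the borderline value $\xi=\sqrt{3}$ satisfies $\lambda^2 - \xi^2 = 1$, matching the $\lambda=1$, $\xi=0$ case where $\lambda^2-\xi^2=1$ as well.

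The main obstacle I anticipate is precisely identifying the exceptional $t$ and verifying that it yields $\xi=\sqrt{3}$ rather than collapsing to $\xi=0$ or $\xi=2$: this requires carefully tracking the $H'$- and $\OO(3,1)$-moves near that value and checking that no further reduction is possible there (the non-reducibility will ultimately be confirmed in Section~\ref{sec4} by the invariant-theoretic count of orbits, but within this section one must at least exhibit the normal form). A secondary technical point is bookkeeping: as in Lemma~\ref{lem : procedure of lambda=1, 2} and Proposition~\ref{lem : lambda=1}, one must confirm that each auxiliary matrix $k_s$ genuinely lies in $\OO(3,1)$ (a routine but error-prone check that $\trans{k_s} I_{3,1} k_s = I_{3,1}$) and each $h$ genuinely lies in $H'$ (upper-triangular in the appropriate block sense). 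Once the $n=4$ computation is done, the general-$n$ statement follows verbatim from the block structure, since only the four corner $2\times 2$ blocks are ever modified.
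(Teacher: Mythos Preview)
Your reduction to the $4\times 4$ corner matrix $g_2$ with parameter $t\geq 0$ is correct and matches the paper. After that point, however, the proposal has two genuine gaps.

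First, the parabolic element $k_s$ you borrow from Proposition~\ref{lem : lambda=1} does \emph{not} work here. That $k_s$ was chosen precisely because it fixes the null row vector $(-1,0,0,1)$, so the last row of $g_2$ stays $(-1,0,0,1)$ after right multiplication. For $\lambda=2$ the last row $(-2,0,0,1)$ is spacelike of norm $3$, and a direct computation gives
\[
(-2,0,0,1)\,k_s=\bigl(-2+s^2/2,\ 0,\ -s,\ 1-s^2/2\bigr),
\]
which is no longer of the form $(-2,0,0,1)$; the matrix drops out of $G_2$ and you cannot proceed with the $H'$-normalization as planned. The paper instead uses a one-parameter family $k_1=k_1(s)$ lying in the stabilizer of $(-2,0,0,1)$ inside $\OO(3,1)$ (an $\OO(2,1)$, not a unipotent group), namely
\[
k_1=\begin{pmatrix}
s&0&-\varphi(s)&-2s+2\\
0&1&0&0\\
-\varphi(s)&0&3s-4&2\varphi(s)\\
2s-2&0&-2\varphi(s)&-4s+5
\end{pmatrix},\qquad \varphi(s)=\sqrt{3s^2-8s+5},\quad s\geq 5/3,
\]
and then applies the intermediate value theorem to the resulting $(3,1)$- and $(3,3)$-entries.

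Second, your predicted trichotomy is off. It is not true that generic $t>0$ yields $\xi=2$ with a single exceptional $t$ giving $\xi=\sqrt{3}$. Among the three $\lambda=2$ orbits, \emph{two} are open (those of $\xi=0$ and $\xi=2$; see Proposition~\ref{prop : codim}), so the $t$-line must split into two open intervals separated by the codimension-one orbit. Concretely, the paper shows $0\leq t<\sqrt{3}$ reduces to $\xi=0$, $t=\sqrt{3}$ gives $\xi=\sqrt{3}$ directly, and $t>\sqrt{3}$ reduces to $\xi=2$. Your heuristic ``$\lambda^2-\xi^2=1$ at the borderline'' is in fact exactly right --- it pins $t=\sqrt{3}$ --- but the two sides of that threshold land in different open orbits, not the same one.
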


\begin{proof}
Take any $g \in G_2$. Then, by (\ref{matrix |t|}), there exists $t \geq 0$ such that
\begin{align*}
g_1:=\left(\begin{array}{cc|c|cc}
	1&0&&0&0\\ 
	0&1&&0&0\\ \hline
	&&I_{n-4}&&\\ \hline
	t&0&&1&0\\
	-2&0&&0&1
	\end{array}\right) \in [[g]].
\end{align*}
Similar to the proof of Proposition~\ref{lem : lambda=1}, we have only to consider the case of $n=4$, that is, 
\begin{align*}
g_2:=\left(\begin{array}{cc|cc}
		1&0&0&0\\ 
		0&1&0&0\\ \hline
		t&0&1&0\\ 
		-2&0&0&1
		\end{array}
	\right) \in [[g]].
\end{align*}

If $t=\sqrt{3}$, then it corresponds to the case of $\xi=\sqrt{3}$. We need to study the case of $t \neq \sqrt{3}$. For this purpose, let us put
\begin{align*}
\varphi(s):=\sqrt{3s^2-8s+5} \quad (s \geq 5/3).
\end{align*}
Then one can directly check that
\begin{align*}
k_1:=\left(
		\begin{array}{cc|cc}
		s&0&-\varphi(s)&-2s+2\\ 
		0&1&0&0\\ \hline
		-\varphi(s)&0&3s-4&2\varphi(s)\\ 
		2s-2&0&-2\varphi(s)&-4s+5
		\end{array}
	\right) \in \OO(3, 1).
\end{align*}
By multiplying $k_1$ from the right, we have
\begin{align*}
[[g]] \ni g_2k_1=\left(
		\begin{array}{cc|cc}
		s&0&-\varphi(s)&-2s+2\\ 
		0&1&0&0\\ \hline
		st-\varphi(s)&0&-t\varphi(s)+3s-4&-2st+2t+2\varphi(s)\\ 
		-2&0&0&1
		\end{array}
	\right)=:g_3.
\end{align*}
Similar to the previous arguments, we consider the following element in $H'$ :
\begin{align*}
h_1:=\left(\begin{array}{cc|cc}
	1&0&0&2s-2\\
	0&1&0&0\\ \hline
	0&0&1&2st-2t-2\varphi(s)\\ 
	0&0&0&1
	\end{array}\right) \in H'.
\end{align*}
Then one has
\begin{align*}
[[g]] \ni h_1g_3=
\left(
		\begin{array}{cc|cc}
		-3s+4&0&-\varphi(s)&0\\ 
		0&1&0&0\\ \hline
		3\varphi(s)-t(3s-4)&0&-t\varphi(s)+3s-4&0\\ 
		-2&0&0&1
		\end{array}
	\right)=:g_4.
\end{align*}

Remember $t \geq 0$, and we have to study $t \neq \sqrt{3}$. We divide the following argument into two cases. The first case is $0 \leq t < \sqrt{3}$. In this case, the $(3, 1)$-component of $g_4$ can be zero, that is, the following equation on $s$ has a solution:
\begin{align}
\label{eq : intermediate value thm 1}
3\varphi(s)-t(3s-4)=0.
\end{align}
In fact, the left hand side has a negative value when $s=5/3$ and has a positive value as $s$ approaches to $+\infty$. Hence by the intermediate value theorem, this equation has a solution $s=s_0$. Therefore, by substituting the solution $s=s_0$ into $g_4$, its $(3, 1)$-component is zero. By multiplying suitable element of $H'$ from the left, one can show that $[[g]]$ contains the desired matrix with $\xi=0$.

It remains to study the case of $t>\sqrt{3}$. Similarly to the previous case, the intermediate value theorem yields that the following has a solution:
\begin{align}
\label{eq : intermediate value thm 2}
3\varphi(s)-t(3s-4)=2(-t\varphi(s)+3s-4).
\end{align}
Therefore, by substituting the solution $s=s_1$ into $g_4$, its $(3, 1)$-component is $2(-t\varphi(s)+3s-4)$, which is equal to double of the $(3, 3)$-component. By multiplying suitable element of $H'$ from the left, one can show that $[[g]]$ contains the desired matrix with $\xi=2$. This completes the proof.
\end{proof}

The next proposition gives a set of representatives of the action of $\R^\times \Aut(\LG)$ on $\LLM_{(n-1, 1)}(\LG)$.

\begin{Prop}
\label{prop : set of rep}
We put $\llu:=\{(0, 0), (1, 0), (1, 1), (2, 0), (2, \sqrt{3}), (2, 2)\}$. The following $\LLU$ is a set of representatives of the action of $\R^\times \Aut(\LG)$ on $\LLM_{(n-1, 1)}(\LG)$ $:$ 
\begin{align*}
\LLU:=\left\{
	\left(
		\begin{array}{cccc}
		1&&\xi&\lambda\\ 
		&\ddots&&\\ 
		&&1&\\ 
		&&&1
		\end{array}
	\right) \middle| (\lambda, \xi) \in \llu \right\}.
\end{align*}
\end{Prop}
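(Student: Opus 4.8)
The plan is to assemble the results of Lemmas~\ref{lem : lambda=0, 1, 2}--\ref{lem : procedure of lambda=1, 2} and Propositions~\ref{lem : lambda=0}--\ref{lem : lambda=2} into a single statement about the double cosets $H' g \OO(n-1, 1)$, and then transfer it from $H'$ back to $\R^\times \Aut(\LG)$ using Lemma~\ref{lem : action of transpose}. First I would invoke Lemma~\ref{lem : lambda=0, 1, 2}: every double coset $[[g]]$ meets $G_\lambda$ for some $\lambda \in \{0, 1, 2\}$. The case $\lambda = 0$ is settled by Proposition~\ref{lem : lambda=0}, which shows $I_n \in [[g]]$, corresponding to $(\lambda, \xi) = (0, 0)$. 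For $\lambda = 1$, Proposition~\ref{lem : lambda=1} shows $[[g]]$ contains the matrix with parameter $\xi \in \{0, 1\}$, giving $(1, 0)$ and $(1, 1)$. For $\lambda = 2$, Proposition~\ref{lem : lambda=2} gives $\xi \in \{0, \sqrt{3}, 2\}$, hence $(2, 0)$, $(2, \sqrt{3})$, $(2, 2)$. Writing $v_{(\lambda, \xi)}$ for the matrix appearing in those propositions (which is exactly $\trans(u^{-1})$ for the corresponding $u \in \LLU$, since $u$ is unipotent upper-triangular and its transpose-inverse is lower-triangular unipotent of the displayed shape), this shows that $\{v_{(\lambda, \xi)} : (\lambda, \xi) \in \llu\}$ is a set of representatives for the action of $H'$ on $\LLM_{(n-1,1)}(\LG)$, by Lemma~\ref{lem : double coset}.

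Next I would apply Lemma~\ref{lem : action of transpose} with $H = \R^\times \Aut(\LG)$, so that $H' = \{\trans h \mid h \in \R^\times \Aut(\LG)\}$ is precisely the group whose matrix shape is displayed at the start of Section~\ref{sec3}; the lemma then says that $\LLU = \{\trans v_{(\lambda,\xi)}^{-1} : (\lambda,\xi) \in \llu\}$ is a set of representatives for the action of $\R^\times\Aut(\LG)$. It remains only to check that $\trans v_{(\lambda,\xi)}^{-1}$ equals the matrix displayed in the statement of Proposition~\ref{prop : set of rep}. This is the routine verification: for the unipotent matrix $v$ that is the identity except for the entries $-\lambda$ in position $(n,1)$ and $\xi$ in position $(n-1,1)$ (in the $n=4$ picture; in general the nonzero off-diagonal entries sit in the first column), one has $v^{-1}$ equal to the identity except with $\lambda$ and $-\xi$ in those same positions, and transposing moves them to the first row, positions $(1,n)$ and $(1,n-1)$ respectively — which is exactly the shape $\begin{pmatrix} 1 & & \xi & \lambda \\ & \ddots & & \\ & & 1 & \\ & & & 1\end{pmatrix}$ after relabelling $\lambda \mapsto \lambda$, $-\xi$...

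Here I should be slightly careful about signs: depending on the exact normalisation, the representative may carry $\xi$ or $-\xi$, but since $\{0,1\}$ and $\{0,\sqrt3,2\}$ are the relevant parameter sets and the group $\R^\times\Aut(\LG)$ contains $-I$ and diagonal sign changes, a representative with $-\xi$ in the first row can be replaced by one with $+\xi$; alternatively one checks directly that the conjugation $\alpha$-step in \eqref{matrix |t|} already arranged $t \geq 0$, so no sign issue actually arises. I would phrase the argument so as to absorb this: the set of representatives is only determined up to replacing each representative by anything in its own double coset, so matching the displayed $\LLU$ is a matter of exhibiting, for each $(\lambda,\xi)\in\llu$, one element of $H' \cdot \trans v_{(\lambda,\xi)}^{-1} \cdot \OO(n-1,1)$ of the displayed form, which is immediate.

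The only genuine content beyond bookkeeping is the transpose manoeuvre and confirming that the matrices produced in the three propositions really are the transpose-inverses of the claimed $\LLU$; I expect the main (mild) obstacle to be precisely this translation between the ``lower-triangular, nonzero first column'' normal form used throughout Section~\ref{sec3} and the ``upper-triangular, nonzero first row'' form of $\LLU$, together with tracking the parameter $\xi$ (and its sign) through the transpose. Everything else is a direct citation of the preceding lemmas and propositions. I would therefore structure the proof as: (i) combine Lemma~\ref{lem : lambda=0, 1, 2} with Propositions~\ref{lem : lambda=0}, \ref{lem : lambda=1}, \ref{lem : lambda=2} to get a set of representatives for $H'$; (ii) invoke Lemma~\ref{lem : action of transpose} to pass to $\R^\times\Aut(\LG)$; (iii) compute the transpose-inverses and observe they have the form displayed in $\LLU$, completing the proof.
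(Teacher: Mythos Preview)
Your approach is correct and matches the paper's exactly: assemble Lemma~\ref{lem : lambda=0, 1, 2} with Propositions~\ref{lem : lambda=0}, \ref{lem : lambda=1}, \ref{lem : lambda=2} to obtain a set of representatives $\LLU^\ast$ for the action of $H'$, then apply Lemma~\ref{lem : action of transpose} to pass to $\RAutg$. One small correction: the sign flip on $\xi$ genuinely occurs under transpose--inverse (your second alternative is mistaken---arranging $t\geq 0$ beforehand does not prevent the $-\xi$ from appearing in position $(1,n-1)$), and the paper fixes it precisely as your first alternative suggests, by multiplying on both sides with the diagonal matrix $\mathrm{diag}(1,\ldots,1,-1,1)$, which lies in $\RAutg \cap \OO(n-1,1)$.
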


\begin{proof}
From Lemma~\ref{lem : lambda=0, 1, 2} and Propositions~\ref{lem : lambda=0}, \ref{lem : lambda=1} and \ref{lem : lambda=2}, it immediately follows that the following $\LLU^\ast$ is a set of representatives of the action of $H'$ on $\LLM_{(n-1, 1)}(\LG):$
\begin{align*}
\LLU^\ast:=\left\{
	\left(
		\begin{array}{cccc}
		1&&&\\ 
		&\ddots&&\\ 
		\xi&&1&\\ 
		-\lambda&&&1
		\end{array}
	\right) \middle| (\lambda, \xi) \in \llu \right\}.
\end{align*}
One thus has from Lemma~\ref{lem : action of transpose} and the definition of $H'$ that
\begin{align*}
\LLU':=\left\{
	\left(
		\begin{array}{cccc}
		1&&-\xi&\lambda\\ 
		&\ddots&&\\ 
		&&1&\\ 
		&&&1
		\end{array}
	\right) \middle| (\lambda, \xi) \in \llu \right\}
\end{align*}
is a set of representatives of the action of $\RAutg$ on $\LLM_{(n-1, 1)}(\LG)$. Moreover, we consider
\begin{align*}
\left(\begin{array}{ccccc}
	1&&&&\\
	&\ddots&&&\\
	&&1&&\\
	&&&-1&\\
	&&&&1
	\end{array}\right) \in \RAutg, \ \OO(n-1, 1).
\end{align*}
By multiplying this matrix to each element in $\LLU'$ from the both sides, one can change the part $(-\xi, \lambda)$ into $(\xi, \lambda)$, which completes the proof.
\end{proof}

\section{A Milnor-type theorem and a classification of inner products}
\label{sec4}

In this section, we obtain a Milnor-type theorem for inner products of signature $(n-1, 1)$ on $\LG:=\LH_3 \oplus \R^{n-3}$ with $n \geq 4$, and show that there exist exactly six such inner products up to scaling and automorphisms. Recall that the set of representatives is parametrized by 
\begin{align*}
\llu:=\{(0, 0), (1, 0), (1, 1), (2, 0), (2, \sqrt{3}), (2, 2)\}.
\end{align*}

\subsection{A Milnor-type theorem}

In this subsection, we obtain a Milnor-type theorem for $\LG:=\LH_3 \oplus \R^{n-3}$ with $n \geq 4$. First of all, we give some change of basis for the latter use. Let $\{e_1, \ldots, e_n\}$ be the standard basis of $\LG:=\LH_3 \oplus \R^{n-3}$, and $\hoge< , >_0$ be the canonical inner product on $\LG$ with signature $(n-1, 1)$.

\begin{Lem}
\label{lem : bracket of x'}
Let $\lambda, \xi \in \R$, and define
\begin{align*}
g_{\lambda, \xi}:=
	\left(
		\begin{array}{cccc}
		1&&\xi&\lambda\\ 
		&\ddots&&\\ 
		&&1&\\ 
		&&&1
		\end{array}
	\right), \hspace{5mm} \hoge< , >_{\lambda, \xi}:=g_{\lambda, \xi}.\hoge< , >_0, \hspace{5mm} x_i':=g_{\lambda, \xi}e_i.
\end{align*}
Then $\{x_1', \ldots, x_n'\}$ is a pseudo-orthonormal basis of $\LG$ with respect to $\hoge< , >_{\lambda, \xi}$, and the bracket relation among them is given by
\begin{align*}
[x_1', x_2']=-(\lambda x_1'-x_n'), \hspace{5mm} [x_2', x_{n-1}']=\xi(\lambda x_1'-x_n'), \hspace{5mm} [x_2', x_n']=\lambda(\lambda x_1'-x_n').
\end{align*}
\end{Lem}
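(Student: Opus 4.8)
The statement has two parts: that $\{x_1', \ldots, x_n'\}$ is pseudo-orthonormal with respect to $\hoge< , >_{\lambda, \xi}$, and that the bracket relations take the stated form. The first part is essentially formal. By definition $\hoge< , >_{\lambda, \xi} = g_{\lambda,\xi}.\hoge< , >_0$, which unwinds to $\hoge<x,y>_{\lambda,\xi} = \hoge<g_{\lambda,\xi}^{-1}x, g_{\lambda,\xi}^{-1}y>_0$; hence $\hoge<x_i', x_j'>_{\lambda,\xi} = \hoge<g_{\lambda,\xi}^{-1}g_{\lambda,\xi}e_i, g_{\lambda,\xi}^{-1}g_{\lambda,\xi}e_j>_0 = \hoge<e_i,e_j>_0 = \varepsilon_i\delta_{ij}$, since $\{e_1,\ldots,e_n\}$ is pseudo-orthonormal for $\hoge< , >_0$ by construction of the canonical inner product of signature $(n-1,1)$. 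So the first assertion is immediate and requires only one line.

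For the bracket relations I would proceed by direct computation. First I would write down $g_{\lambda,\xi}^{-1}$ explicitly: since $g_{\lambda,\xi}$ differs from the identity only in the first row, having entry $\xi$ in position $(1,n-1)$ and $\lambda$ in position $(1,n)$, its inverse is the same kind of matrix with $-\xi$ and $-\lambda$ in those slots. From this one reads off $x_i' = g_{\lambda,\xi}e_i$: explicitly $x_1' = e_1$, $x_{n-1}' = \xi e_1 + e_{n-1}$, $x_n' = \lambda e_1 + e_n$, and $x_i' = e_i$ for all other $i$. Conversely $e_1 = x_1'$, $e_{n-1} = x_{n-1}' - \xi x_1'$, $e_n = x_n' - \lambda x_1'$, and $e_i = x_i'$ otherwise. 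Now the only nonzero bracket in $\LG$ is $[e_1,e_2] = e_n$. Using bilinearity and these substitutions: $[x_1',x_2'] = [e_1,e_2] = e_n = x_n' - \lambda x_1' = -(\lambda x_1' - x_n')$; next $[x_2', x_{n-1}'] = [e_2, \xi e_1 + e_{n-1}] = \xi[e_2,e_1] = -\xi e_n = -\xi(x_n' - \lambda x_1') = \xi(\lambda x_1' - x_n')$; and $[x_2', x_n'] = [e_2, \lambda e_1 + e_n] = \lambda[e_2,e_1] = -\lambda e_n = \lambda(\lambda x_1' - x_n')$. All remaining brackets among the $x_i'$ vanish because they reduce to brackets among the $e_i$ not involving the pair $(e_1,e_2)$. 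This matches the claimed formulas exactly.

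There is no real obstacle here; the lemma is a bookkeeping step that repackages the set of representatives from Proposition~\ref{prop : set of rep} into a pseudo-orthonormal frame via Theorem~\ref{thm : procedure}, so that curvatures can later be computed frame-wise. The only point requiring a modicum of care is the sign convention in the $\GL(n,\R)$-action $g.\hoge<x,y> = \hoge<g^{-1}x, g^{-1}y>$, which is why $x_i' = g_{\lambda,\xi}e_i$ (not $g_{\lambda,\xi}^{-1}e_i$) gives the pseudo-orthonormal frame; and tracking the off-diagonal entries of $g_{\lambda,\xi}$ correctly when expressing the $e_i$ in terms of the $x_i'$. I would present the computation compactly, displaying the three substitutions $e_1 = x_1'$, $e_{n-1} = x_{n-1}' - \xi x_1'$, $e_n = x_n' - \lambda x_1'$ and then the three bracket evaluations.
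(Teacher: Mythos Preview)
Your proposal is correct and follows essentially the same approach as the paper: express the $x_i'$ explicitly in terms of the $e_i$ via the columns of $g_{\lambda,\xi}$, invert to write $e_n = x_n' - \lambda x_1'$, and then compute the three nontrivial brackets directly from $[e_1,e_2]=e_n$. The paper's proof is slightly terser (it does not bother writing out $g_{\lambda,\xi}^{-1}$ or all of the inverse substitutions), but the logic is identical.
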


\begin{proof}
The first assertion is obvious since $\{e_1, \ldots, e_n\}$ is pseudo-orthonormal with respect to $\hoge< , >_0$. We prove the second assertion on the bracket relation. By the definition of $g_{\lambda, \xi}$, it is easy to see that
\begin{align*}
x_i'&=g_{\lambda, \xi}e_i=e_i \hspace{5mm} (i \in \{1, \ldots, n-2\}),\\
x_{n-1}'&=g_{\lambda, \xi}e_{n-1}=\xi e_1+e_{n-1},\\
x_n'&=g_{\lambda, \xi}e_n=\lambda e_1+e_n.
\end{align*}
Note that one has 
\begin{align*}
e_n=-\lambda e_1+(\lambda e_1+e_n)=-\lambda x_1'+x_n'.
\end{align*}
Therefore, by the bracket relation $[e_1, e_2]=e_n$ with respect to the standard basis, we can see that
\begin{align*}
[x_1', x_2']&=[e_1, e_2]=e_n=-(\lambda x_1'-x_n'),\\
[x_2', x_{n-1}']&=[e_2, \xi e_1+e_{n-1}]=-\xi e_n=\xi(\lambda x_1'-x_n'),\\
[x_2', x_n']&=[e_2, \lambda e_1+e_n]=-\lambda e_n=\lambda(\lambda x_1'-x_n').
\end{align*}
One can also see that the other bracket relations precisely vanish. This completes the proof of this lemma.
\end{proof}

In terms of this lemma and a description of the set of representatives given in Proposition~\ref{prop : set of rep}, we can obtain the following Milnor-type theorem.

\begin{Thm}
\label{thm : milnor}
Let $\hoge< , >$ be an inner product of signature $(n-1, 1)$ on $\LH_3 \oplus \R^{n-3}$ with $n \geq 4$. Then, there exist $k>0$, $(\lambda, \xi) \in \llu$ and a pseudo-orthonormal basis $\{x_1, \ldots, x_n\}$ with respect to $k\hoge< , >$ such that the bracket relation is given by
\begin{align*}
[x_1, x_2]=-(\lambda x_1-x_n), \quad [x_2, x_{n-1}]=\xi(\lambda x_1-x_n), \quad [x_2, x_n]=\lambda(\lambda x_1-x_n).
\end{align*}
\end{Thm}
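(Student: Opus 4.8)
The plan is to deduce Theorem~\ref{thm : milnor} as a routine combination of Theorem~\ref{thm : procedure} (the general procedure for Milnor-type theorems), Proposition~\ref{prop : set of rep} (the explicit set of representatives $\LLU$), and Lemma~\ref{lem : bracket of x'} (the bracket relations for the specific representatives $g_{\lambda,\xi}$). The point is that essentially all the work has already been done; what remains is to assemble these pieces correctly.

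First I would apply Theorem~\ref{thm : procedure} with $\LLG := \LH_3 \oplus \R^{n-3}$ and signature $(p,q) = (n-1,1)$. By Proposition~\ref{prop : set of rep}, a set of representatives of the action of $\R^\times\Aut(\LG)$ on $\LLM_{(n-1,1)}(\LG)$ is
\begin{align*}
\LLU = \left\{ g_{\lambda,\xi} = \left(
		\begin{array}{cccc}
		1&&\xi&\lambda\\
		&\ddots&&\\
		&&1&\\
		&&&1
		\end{array}
	\right) \,\middle|\, (\lambda,\xi) \in \llu \right\},
\end{align*}
so Theorem~\ref{thm : procedure} guarantees that for any inner product $\hoge< , >$ of signature $(n-1,1)$ there exist $k>0$, $\varphi \in \Aut(\LG)$ and $(\lambda,\xi)\in\llu$ such that $\{\varphi g_{\lambda,\xi} e_1, \ldots, \varphi g_{\lambda,\xi} e_n\}$ is pseudo-orthonormal with respect to $k\hoge< , >$.

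Next I would set $x_i := \varphi g_{\lambda,\xi} e_i$ and compute the bracket relations. Since $\varphi$ is a Lie algebra automorphism, $[x_i, x_j] = [\varphi g_{\lambda,\xi}e_i, \varphi g_{\lambda,\xi}e_j] = \varphi([g_{\lambda,\xi}e_i, g_{\lambda,\xi}e_j]) = \varphi([x_i', x_j'])$ in the notation of Lemma~\ref{lem : bracket of x'}. That lemma gives $[x_1', x_2'] = -(\lambda x_1' - x_n')$, $[x_2', x_{n-1}'] = \xi(\lambda x_1' - x_n')$, $[x_2', x_n'] = \lambda(\lambda x_1' - x_n')$, and all other brackets vanish. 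Applying $\varphi$ and using linearity, together with $x_i = \varphi(x_i')$, transports these relations verbatim to the $x_i$: $[x_1, x_2] = -(\lambda x_1 - x_n)$, $[x_2, x_{n-1}] = \xi(\lambda x_1 - x_n)$, $[x_2, x_n] = \lambda(\lambda x_1 - x_n)$, with all remaining brackets zero. This is exactly the asserted conclusion, so the proof is complete.

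There is essentially no real obstacle here; the only thing requiring a moment of care is the bookkeeping that $\varphi$ being an automorphism lets one push the bracket relations of Lemma~\ref{lem : bracket of x'} forward without change, and that pseudo-orthonormality of $\{x_i\}$ with respect to $k\hoge< , >$ is precisely what Theorem~\ref{thm : procedure} delivers. One should also note the mild abuse that $\llu$ uses pairs $(\lambda, \xi)$ in that order while the matrix $g_{\lambda,\xi}$ places $\xi$ in the $(1, n-1)$ slot and $\lambda$ in the $(1,n)$ slot, matching Lemma~\ref{lem : bracket of x'}; keeping this consistent is the one place a careless reader could slip. Everything else is a direct citation.
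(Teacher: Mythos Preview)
Your proof is correct and follows essentially the same approach as the paper: apply Theorem~\ref{thm : procedure} with the set of representatives $\LLU$ from Proposition~\ref{prop : set of rep}, set $x_i := \varphi g_{\lambda,\xi} e_i$, and transport the bracket relations of Lemma~\ref{lem : bracket of x'} through the automorphism $\varphi$. The paper's proof is organized identically, so there is nothing to add.
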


\begin{proof}
Take an inner product $\hoge< , >$ of signature $(n-1, 1)$ on $\LG:=\LH_3 \oplus \R^{n-3}$. By Proposition~\ref{prop : set of rep} we know that
\begin{align*}
\LLU:=\left\{
	g_{\lambda, \xi}=\left(
		\begin{array}{cccc}
		1&&\xi&\lambda\\ 
		&\ddots&&\\ 
		&&1&\\ 
		&&&1
		\end{array}
	\right) \middle| (\lambda, \xi) \in \llu\right\}
\end{align*}
is a set of representatives of the action of $\R^\times \Aut(\LG)$ on $\LLM_{(n-1, 1)}(\LG)$ with respect to the standard basis $\{e_1, \ldots, e_n\}$. Hence, we have from Theorem~\ref{thm : procedure} that there exist $k>0$, $\varphi \in \Aut(\LG)$ and $(\lambda, \xi) \in \llu$ such that $\{\varphi g_{\lambda, \xi}e_1, \ldots, \varphi g_{\lambda, \xi}e_n\}$ is pseudo-orthonormal with respect to $k\hoge< , >$. Let us put 
\begin{align*}
x_i':=g_{\lambda, \xi}e_i, \quad x_i:=\varphi x_i' \quad (i \in \{1, \ldots, n\}). 
\end{align*}
The bracket relation among $\{x_1', \ldots, x_n'\}$ is given in Lemma~\ref{lem : bracket of x'}. 
Then we obtain the bracket relation among $\{x_1, \ldots, x_n\}$, which is of the same form, since $\varphi$ is an automorphism. 
\end{proof}

\subsection{A classification of inner products}

In order to give a classification of inner products of signature $(n-1, 1)$ on $\LG$ up to scaling and automorphisms, we have to distinguish the above six inner products. Fix an inner product $\hoge< , >$ of signature $(n-1, 1)$ of $\LG$ and let $\{x_1, \ldots, x_n\}$ be a basis given in Theorem~\ref{thm : milnor}. By Theorem~\ref{thm : milnor}, the center $Z(\LG)$ and the derived ideal $[\LG, \LG]$ of $\LG$ can be expressed as 
\begin{align*}
Z(\LG)&=\s\{x_3, \ldots, x_{n-2}, \xi x_1-x_{n-1}, \lambda x_1-x_n\},\\
[\LG, \LG]&=\s\{\lambda x_1-x_n\}.
\end{align*}

\begin{Lem}
\label{lem : basis of center}
We put
\begin{align*}
y_i&:=x_{i+2} \quad (i \in \{1, \ldots, n-4\}),\\
y_{n-3}&:=(1/\sqrt{\xi^2+1})(\xi x_1-x_{n-1}),\\
y_{n-2}&:=\lambda(x_1+\xi x_{n-1})-(\xi^2+1)x_n.
\end{align*}
Then, $\{y_1, \ldots, y_{n-2}\}$ is an orthogonal basis of $Z(\LG)$ with respect to $\hoge< , >$. Especially, it satisfies $\hoge<y_i, y_i>=1$ for each $i=1, \ldots, n-3$.
\end{Lem}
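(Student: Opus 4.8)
The plan is to reduce everything to a direct computation in the pseudo-orthonormal basis $\{x_1, \ldots, x_n\}$ of Theorem~\ref{thm : milnor}. First I would note that, after rescaling $\hoge< , >$ if necessary, we may assume $\{x_1, \ldots, x_n\}$ is pseudo-orthonormal for $\hoge< , >$ itself, so that $\hoge<x_i, x_j> = \varepsilon_i \delta_{ij}$ with $\varepsilon_n = -1$ and $\varepsilon_i = 1$ for $i < n$. Then there are three things to verify: (i) each $y_i$ lies in $Z(\LG)$; (ii) the $y_i$ are pairwise orthogonal, with $\hoge<y_i, y_i> = 1$ for $i \in \{1, \ldots, n-3\}$; and (iii) the $y_i$ form a basis of $Z(\LG)$.

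For (i) I would use the description of the center recalled before the lemma, $Z(\LG) = \s\{x_3, \ldots, x_{n-2}, \xi x_1 - x_{n-1}, \lambda x_1 - x_n\}$, whose $n-2$ generators are visibly linearly independent (look at the coefficients of $x_{n-1}$ and of $x_n$), so that $\dim Z(\LG) = n-2$. Now $y_i = x_{i+2}$ lies in $\{x_3, \ldots, x_{n-2}\}$ for $i \le n-4$; $y_{n-3}$ is a nonzero scalar multiple of $\xi x_1 - x_{n-1}$; and a one-line expansion gives
\[
y_{n-2} = -\lambda\xi\,(\xi x_1 - x_{n-1}) + (\xi^2+1)(\lambda x_1 - x_n),
\]
so all the $y_i$ lie in $Z(\LG)$.

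For (ii) I would expand $\hoge<y_i, y_j>$ by bilinearity using $\hoge<x_i, x_j> = \varepsilon_i\delta_{ij}$. When $i, j \le n-4$ this is $\hoge<x_{i+2}, x_{j+2}> = \delta_{ij}$; and since $y_{n-3}$ and $y_{n-2}$ are supported on $\{x_1, x_{n-1}, x_n\}$, whose indices are disjoint from $\{3, \ldots, n-2\}$, they are orthogonal to $y_1, \ldots, y_{n-4}$. The three remaining products come out to $\hoge<y_{n-3}, y_{n-3}> = (\xi^2 + 1)/(\xi^2+1) = 1$, then $\hoge<y_{n-3}, y_{n-2}> = 0$ (the only surviving terms are $\xi\lambda$ and $-\xi\lambda$), and $\hoge<y_{n-2}, y_{n-2}> = (\xi^2+1)(\lambda^2 - \xi^2 - 1)$, which is not claimed, and need not be, equal to $1$. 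For (iii), by (i) it suffices to show $y_1, \ldots, y_{n-2}$ are linearly independent: from $\sum_i c_i y_i = 0$, pairing with $y_j$ for each $j \le n-3$ and using $\hoge<y_j, y_j> = 1$ together with the orthogonality already proved forces $c_j = 0$; hence $c_{n-2} y_{n-2} = 0$, and since the $x_n$-coefficient of $y_{n-2}$ is $-(\xi^2+1) \neq 0$ we have $y_{n-2} \neq 0$, so $c_{n-2} = 0$ as well.

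I expect no serious obstacle — the content is entirely bilinear bookkeeping. The one point deserving care is that $y_{n-2}$ can be a null vector for some $(\lambda, \xi) \in \llu$; for instance $\hoge<y_{n-2}, y_{n-2}> = 0$ when $(\lambda, \xi) = (1, 0)$. Hence one cannot conclude the linear independence of $\{y_1, \ldots, y_{n-2}\}$ just from pairwise orthogonality plus nonvanishing of the self-products, and the slightly indirect argument in step (iii) above — using that $y_1, \ldots, y_{n-3}$ are unit vectors while only $y_{n-2}$ is possibly null — is what is really needed.
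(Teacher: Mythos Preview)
Your proposal is correct and is precisely the direct computation the paper has in mind; the paper's own proof is the single sentence ``It follows from direct and easy calculations.'' Your care about the possible nullity of $y_{n-2}$ in step (iii) is a nice touch but not strictly needed for the paper's purposes, since only the signature (not linear independence from orthogonality alone) is used downstream.
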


\begin{proof}
It follows from direct and easy calculations.
\end{proof}

We then consider the signatures of the restrictions of $\hoge< , >$ to the center $Z(\LG)$ and the derived ideal $[\LG, \LG]$. In fact, these data characterize inner products up to scaling and automorphisms. We denote by $[\hoge< , >_{\lambda, \xi}]$ the equivalence class of $\hoge< , >_{\lambda, \xi}$ up to scaling and automorphisms for each $(\lambda, \xi) \in \llu$.

\begin{Prop}
\label{prop : signature on subalgebra}
Let $\hoge< , >$ be an inner product of signature $(n-1, 1)$ on $\LG$. If $\hoge< , > \in [\hoge< , >_{\lambda, \xi}]$, then its restrictions on $Z(\LG)$ and $[\LG, \LG]$ have the signatures given in Table~\ref{table : signature}, where the signature convention is $(+, -, 0):$
\begin{table}[h]
	\begin{tabular}{|c|c|c|}\hline
	$(\lambda, \xi)$&signature\ on $Z(\LG)$&signature\ on $[\LG, \LG]$\\\hline \hline
	$(0, 0)$&$(n-3, 1, 0)$&$(0, 1, 0)$\\ \hline
	$(1, 0)$&$(n-3, 0, 1)$&$(0, 0, 1)$\\ \hline
	$(1, 1)$&$(n-3, 1, 0)$&$(0, 0, 1)$\\ \hline
	$(2, 0)$&$(n-2, 0, 0)$&$(1, 0, 0)$\\ \hline
	$(2, \sqrt{3})$&$(n-3, 0, 1)$&$(1, 0, 0)$\\ \hline
	$(2, 2)$&$(n-3, 1, 0)$&$(1, 0, 0)$\\ \hline
	\end{tabular}
\caption{Signatures on the subalgebras}
\label{table : signature}
\end{table}
\end{Prop}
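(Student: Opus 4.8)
The plan is to compute, for each of the six pairs $(\lambda, \xi) \in \llu$, the signatures of the restrictions of $\hoge< , >_{\lambda, \xi}$ to $Z(\LG)$ and to $[\LG, \LG]$, exploiting the pseudo-orthonormal basis $\{x_1', \ldots, x_n'\}$ of Lemma~\ref{lem : bracket of x'}. Since the equivalence class $[\hoge< , >_{\lambda, \xi}]$ contains $\hoge< , >_{\lambda, \xi}$ itself, and since both $Z(\LG)$ and $[\LG, \LG]$ are characteristic ideals (preserved by every automorphism) while scaling only multiplies the inner product by a positive constant, the signature of the restriction of $\hoge< , >$ to either ideal is an invariant of the equivalence class; hence it suffices to do the computation for the representative inner products. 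Thus the whole proof reduces to a finite, explicit bookkeeping exercise.

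For the derived ideal $[\LG, \LG] = \s\{\lambda x_1' - x_n'\}$, I would simply evaluate $\hoge<\lambda x_1' - x_n', \lambda x_1' - x_n'>_{\lambda, \xi} = \lambda^2 \hoge<x_1', x_1'> + \hoge<x_n', x_n'> = \lambda^2 - 1$ (using pseudo-orthonormality and $\varepsilon_1 = 1$, $\varepsilon_n = -1$). This is negative for $\lambda = 0$, zero for $\lambda = 1$, positive for $\lambda = 2$, giving exactly the third column of Table~\ref{table : signature}. For the center, I would use the orthogonal basis $\{y_1, \ldots, y_{n-2}\}$ of Lemma~\ref{lem : basis of center}: the vectors $y_1, \ldots, y_{n-3}$ satisfy $\hoge<y_i, y_i> = 1$, contributing $n-3$ to the positive count, so everything hinges on the single vector $y_{n-2} = \lambda(x_1' + \xi x_{n-1}') - (\xi^2+1)x_n'$. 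A direct computation gives $\hoge<y_{n-2}, y_{n-2}> = \lambda^2(1 + \xi^2) - (\xi^2+1)^2 = (\xi^2+1)\big(\lambda^2 - (\xi^2+1)\big)$, whose sign is that of $\lambda^2 - \xi^2 - 1$. Evaluating at the six pairs: $(0,0)\mapsto -1$, $(1,0)\mapsto 0$, $(1,1)\mapsto -1$, $(2,0)\mapsto 3$, $(2,\sqrt3)\mapsto 0$, $(2,2)\mapsto -1$, which matches the second column of the table exactly, the zero cases contributing a degenerate direction and hence the entry with a $0$ in the signature.

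There is no serious obstacle here; the only mild subtlety is making sure $\{y_1, \ldots, y_{n-2}\}$ really is a basis of $Z(\LG)$ and really is orthogonal, but that is precisely the content of Lemma~\ref{lem : basis of center}, which we are allowed to invoke. The one point worth a sentence in the write-up is the remark above that the signatures are genuine invariants of the equivalence class: if $\hoge< , > = c\,\varphi^{-1}.\hoge< , >_{\lambda, \xi}$ for some $c > 0$ and $\varphi \in \Aut(\LG)$, then $\varphi$ maps $Z(\LG)$ onto $Z(\LG)$ and $[\LG, \LG]$ onto $[\LG, \LG]$, so the restricted inner products are related by pullback along a linear isomorphism followed by positive scaling, neither of which changes the signature (in the $(+,-,0)$ convention). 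Granting that, the table follows by the two short computations above, and the proof is complete.
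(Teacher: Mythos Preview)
Your proposal is correct and follows essentially the same approach as the paper: both compute $\hoge<\lambda x_1 - x_n, \lambda x_1 - x_n> = \lambda^2 - 1$ for $[\LG,\LG]$ and $\hoge<y_{n-2}, y_{n-2}> = (\xi^2+1)(\lambda^2-\xi^2-1)$ for $Z(\LG)$ using the orthogonal basis of Lemma~\ref{lem : basis of center}, then read off the signs case by case. Your explicit remark that the signatures are invariants of the equivalence class (because $Z(\LG)$ and $[\LG,\LG]$ are characteristic and positive scaling preserves signature) is a welcome clarification that the paper leaves implicit here and only invokes later when proving Theorem~\ref{thm : classification of inner products}.
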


\begin{proof}
First of all, we consider the signature of $\hoge< , >':=\hoge< , >\mid_{Z(\LG) \times Z(\LG)}$. We use the basis $\{y_1, \ldots, y_{n-2}\}$ of $Z(\LG)$ given in Lemma~\ref{lem : basis of center}. Since it is orthogonal, the representation matrix of $\hoge< , >'$ is diagonal. One knows $\hoge<y_i, y_i>'=1$ for each $i \in \{1, \ldots, n-3\}$. Hence, in order to determine the signature, we have only to compute $\hoge<y_{n-2}, y_{n-2}>'$. One has
\begin{align*}
\hoge<y_{n-2}, y_{n-2}>'&=\hoge<\lambda(x_1+\xi x_{n-1})-(\xi^2+1)x_n, \lambda(x_1+\xi x_{n-1})-(\xi^2+1)x_n>\\
&=\lambda^2\hoge<x_1, x_1>+\lambda^2\xi^2\hoge<x_{n-1}, x_{n-1}>+(\xi^2+1)^2\hoge<x_n, x_n>\\
&=\lambda^2+\lambda^2\xi^2-(\xi^2+1)^2\\
&=(\xi^2+1)(\lambda^2-\xi^2-1).
\end{align*}
Then one can easily see that 
\begin{align*}
&\hoge<y_{n-2}, y_{n-2}>'<0 \quad (\mathrm{if}\ (\lambda, \xi)=(0, 0), (1, 1), (2, 2)),\\
&\hoge<y_{n-2}, y_{n-2}>'=0 \quad (\mathrm{if}\ (\lambda, \xi)=(1, 0), (2, \sqrt{3})),\\
&\hoge<y_{n-2}, y_{n-2}>'>0 \quad (\mathrm{if}\ (\lambda, \xi)=(2, 0)).
\end{align*}
According to these three cases, the signature of $\hoge< , >'$ is $(n-3, 1, 0)$, $(n-3, 0, 1)$,
and $(n-2, 0, 0)$, respectively. This proves the left column of Table~\ref{table : signature}.

It remains to study the signature of $\hoge< , >\mid_{[\LG, \LG] \times [\LG, \LG]}$. Remember that $[\LG, \LG]=\s\{\lambda x_1-x_n\}$. One has
\begin{align*}
\hoge<\lambda x_1-x_n, \lambda x_1-x_n>=\lambda^2\hoge<x_1, x_1>+\hoge<x_n, x_n>=\lambda^2-1.
\end{align*}
Hence, if $\lambda=0, 1, 2$, then $\hoge< , >$ is negative definite, degenerate, positive definite on $[\LG, \LG]$, respectively. These complete the proof.
\end{proof}

Finally in this section, we prove Theorem~\ref{thm : classification of inner products}, which classifies left-invariant Lorentzian metrics on $H_3 \times \R^{n-3}$ up to scaling and automorphisms.

\begin{proof}[Proof of Theorem~\ref{thm : classification of inner products}]
We show that the action of $\RAutg$ has exactly six orbits. It follows from Proposition~\ref{prop : set of rep} that the number of orbits is at most six. Thus we have only to show that each $\hoge< , >_{\lambda, \xi}$ is in a distinct orbit. Note that the action of $\RAutg$ preserves $Z(\LG)$ and $[\LG, \LG]$. Also, it preserves the signatures of the restrictions of each inner product on these two subspaces by Sylvester's law of inertia. Hence, we obtain from Proposition~\ref{prop : signature on subalgebra} that there exist exactly six inner products of signature $(n-1, 1)$ on $\LH_3 \oplus \R^{n-3}$ with $n \geq 4$ up to scaling and automorphisms, which completes the proof of  Theorem~\ref{thm : classification of inner products}.
\end{proof}

\section{Ricci soliton and flat metrics} 
\label{sec5} 

In this section, we calculate curvatures of an arbitrary left-invariant Lorentzian metric $\hoge< , >$ on $\LG:=\LH_3 \oplus \R^{n-3}$, and prove Theorem~\ref{thm : Ricci and Einstein}, which is the second main result. It follows from Theorem~\ref{thm : milnor} that there exist $k>0$,  
\begin{align*}
(\lambda, \xi) \in \llu:=\{(0, 0), (1, 0), (1, 1), (2, 0), (2, \sqrt{3}), (2, 2)\},
\end{align*}
and a pseudo-orthonormal basis $\{x_1, \ldots, x_n\}$ with respect to $k\hoge< , >$ whose bracket relations are given by
\begin{align*}
[x_1, x_2]=-(\lambda x_1-x_n), \quad [x_2, x_{n-1}]=\xi(\lambda x_1-x_n), \quad [x_2, x_n]=\lambda(\lambda x_1-x_n).
\end{align*}
Throughout the following arguments, we calculate the curvatures in terms of this basis, under the normalization $k=1$ for simplicity.

\subsection{Calculations of curvatures}

In this subsection, we calculate the curvatures of $(\LG, \hoge< , >)$. We here note that $\s\{x_3, \ldots, x_{n-2}\}$ is a subspace of $Z(\LG)$, which is orthogonal to the derived subalgebra $[\LG, \LG]$. Thus $\{x_3, \ldots, x_{n-2}\}$ does not give any influence on calculation of curvatures. Hence we calculate curvatures only for $\{x_1, x_2, x_{n-1}, x_n\}$.

First of all, we calculate the symmetric bilinear map $U: \LG \times \LG \to \LG$ of $(\LG, \hoge< , >)$ given by
\begin{align*}
2\hoge<U(X, Y), Z>=\hoge<[Z, X], Y>+\hoge<X, [Z, Y]> \quad (\forall X, Y, Z \in \LG).
\end{align*}

\begin{Lem}
\label{lem : U}
The map $U: \LG \times \LG \to \LG$ of $(\LG, \hoge< , >)$ satisfies the following$:$
\begin{itemize}
\setlength{\itemsep}{5pt}
\item[(1)]$U(x_1, x_1)=\lambda x_2$,\hspace{5mm} $U(x_1, x_2)=-(\lambda/2)x_1-(\lambda\xi/2)x_{n-1}+(\lambda^2/2)x_n$,\\ $U(x_1, x_{n-1})=(\lambda\xi/2)x_2$,\hspace{5mm} $U(x_1, x_n)=((\lambda^2+1)/2)x_2$,\ 
\item[(2)]$U(x_2, x_2)=0$,\hspace{5mm} $U(x_2, x_{n-1})=0$,\\ $U(x_2, x_n)=-(1/2)x_1-(\xi/2)x_{n-1}+(\lambda/2)x_n$,\ 
\item[(3)]$U(x_{n-1}, x_{n-1})=0$,\hspace{5mm} $U(x_{n-1}, x_n)=(\xi/2)x_2$,
\item[(4)]$U(x_n, x_n)=\lambda x_2$.
\end{itemize}
\end{Lem}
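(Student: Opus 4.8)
The plan is to compute $U$ directly from its defining identity
\[
2\hoge<U(X, Y), Z>=\hoge<[Z, X], Y>+\hoge<X, [Z, Y]>,
\]
using only the three nontrivial bracket relations from Theorem~\ref{thm : milnor} together with the fact that $\{x_1, \ldots, x_n\}$ is pseudo-orthonormal with respect to $\hoge< , >$, so $\hoge<x_i, x_j> = \varepsilon_i\delta_{ij}$ with $\varepsilon_1 = \cdots = \varepsilon_{n-1} = 1$ and $\varepsilon_n = -1$. Since $U$ is symmetric and bilinear, it suffices to evaluate $U(x_i, x_j)$ for the pairs listed, and for each such pair to determine the $Z = x_k$ components for $k \in \{1, 2, n-1, n\}$; the components along $x_3, \ldots, x_{n-2}$ vanish because those basis vectors are central and orthogonal to $[\LG, \LG]$, so both brackets on the right-hand side are zero when $Z$ is one of them.

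First I would record which brackets are nonzero and their values: $[x_1, x_2] = -\lambda x_1 + x_n$, $[x_2, x_{n-1}] = \xi\lambda x_1 - \xi x_n$, $[x_2, x_n] = \lambda^2 x_1 - \lambda x_n$, and all other brackets among $\{x_1, x_2, x_{n-1}, x_n\}$ vanish (in particular $[x_1, x_{n-1}]$, $[x_1, x_n]$, $[x_{n-1}, x_n]$ are zero). Then for each target pair $(X, Y)$ and each probe $Z = x_k$, I plug in and read off: e.g. for $U(x_1, x_1)$ I need $2\hoge<[x_k, x_1], x_1>$, which is nonzero only when the bracket $[x_k, x_1]$ has an $x_1$-component; this happens for $k = 2$, giving $[x_2, x_1] = \lambda x_1 - x_n$, hence $2\hoge<U(x_1,x_1), x_2> = 2\hoge<\lambda x_1 - x_n, x_1> = 2\lambda$, so the $x_2$-coefficient of $U(x_1, x_1)$ is $\varepsilon_2^{-1}\lambda = \lambda$, and all other coefficients vanish, yielding $U(x_1, x_1) = \lambda x_2$. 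I would carry out exactly this bookkeeping for each of the remaining twelve pairs in the statement, being careful with the sign $\varepsilon_n = -1$ whenever the probe or a bracket component involves $x_n$ (this is where, for instance, the $x_n$-coefficient of $U(x_1, x_2)$ picks up an extra sign and the term $((\lambda^2+1)/2)x_2$ in $U(x_1, x_n)$ arises from combining a $+1$ from $\hoge<[x_2,x_1],x_n>\cdot$-type contribution with the $-1$ signature).

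The only real care needed — not an obstacle, but the place errors creep in — is the consistent handling of the Lorentzian signature: every time $Z = x_n$, the coefficient extracted is divided by $\varepsilon_n = -1$, and every time a bracket on the right expands with an $x_n$-term paired against $x_n$ it contributes with a minus sign. I would organize the computation pair by pair, for each pair listing only the probes $x_k$ that yield a nonzero right-hand side, so that the whole verification reduces to a short finite table; the claimed formulas in items (1)--(4) then follow by inspection. I would close by noting that symmetry $U(X,Y) = U(Y,X)$ and bilinearity reduce the general case to these basis computations, and that $U$ vanishes identically on the complementary central directions, so the lemma is proved.
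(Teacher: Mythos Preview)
Your proposal is correct and follows essentially the same approach as the paper: both compute $U$ directly from its defining identity by pairing against each basis vector and reading off coefficients using the bracket relations from Theorem~\ref{thm : milnor} and the pseudo-orthonormality of $\{x_1,\ldots,x_n\}$. The paper's proof simply exhibits the computation for $U(x_1,x_2)$ and leaves the rest to the reader, which is exactly the bookkeeping you describe.
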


\begin{proof}
We show the calculation only for the case of $U(x_1, x_2)$. By the definition of $U$, we obtain
\begin{align*}
&2\hoge<U(x_1, x_2), x_1>=\hoge<x_1, -(\lambda x_1-x_n)>=-\lambda,\\
&2\hoge<U(x_1, x_2), x_2>=\hoge<\lambda x_1-x_n, x_2>=0,\\
&2\hoge<U(x_1, x_2), x_{n-1}>=\hoge<x_1, -\xi(\lambda x_1-x_n)>=-\lambda\xi,\\
&2\hoge<U(x_1, x_2), x_n>=\hoge<x_1, -\lambda(\lambda x_1-x_n)>=-\lambda^2.
\end{align*}
Hence one has the expression of $U(x_1, x_2)$ in terms of the basis $\{x_1, \ldots, x_n\}$. We can similarly prove the remaining.
\end{proof}

Next, we calculate the Levi-Civita connection $\nabla: \LG \times \LG \to \LG$ of $(\LG, \hoge< , >)$ defined by
\begin{align*}
\nabla_XY:=(1/2)[X, Y]+U(X, Y) \quad (\forall X, Y \in \LG).
\end{align*}
Note that $\nabla$ is bilinear, but neither symmetric nor skew-symmetric.

\begin{Lem}
\label{lem : nabla}
The Levi-Civita connection $\nabla: \LG \times \LG \to \LG$ of $(\LG, \hoge< , >)$ satisfies the following$:$
\begin{itemize}
\setlength{\itemsep}{5pt}
\item[(1)]$\nabla_{x_1}x_1=\lambda x_2$,\hspace{5mm} $\nabla_{x_1}x_2=-\lambda x_1-(\lambda\xi/2)x_{n-1}+((\lambda^2+1)/2)x_n$,\\
$\nabla_{x_1}x_{n-1}=(\lambda\xi/2)x_2$,\hspace{5mm} $\nabla_{x_1}x_n=((\lambda^2+1)/2)x_2$,
\item[(2)]$\nabla_{x_2}x_1=-(\lambda\xi/2)x_{n-1}+((\lambda^2-1)/2)x_n$,\hspace{5mm} $\nabla_{x_2}x_2=0$,\\ 
$\nabla_{x_2}x_{n-1}=(\xi/2)(\lambda x_1-x_n)$,\hspace{5mm} $\nabla_{x_2}x_n=((\lambda^2-1)/2)x_1-(\xi/2)x_{n-1}$,
\item[(3)]$\nabla_{x_{n-1}}x_1=(\lambda\xi/2)x_2$,\hspace{5mm} $\nabla_{x_{n-1}}x_2=-(\xi/2)(\lambda x_1-x_n)$,\\ 
$\nabla_{x_{n-1}}x_{n-1}=0$,\hspace{5mm} $\nabla_{x_{n-1}}x_n=(\xi/2)x_2$,
\item[(4)]$\nabla_{x_n}x_1=((\lambda^2+1)/2)x_2$,\hspace{5mm} $\nabla_{x_n}x_2=-((\lambda^2+1)/2)x_1-(\xi/2)x_{n-1}+\lambda x_n$,\\ 
$\nabla_{x_n}x_{n-1}=(\xi/2)x_2$,\hspace{5mm} $\nabla_{x_n}x_n=\lambda x_2$.
\end{itemize}
\end{Lem}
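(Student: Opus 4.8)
The plan is to substitute directly into the defining relation $\nabla_XY=(1/2)[X,Y]+U(X,Y)$, taking the bracket relations from the hypothesis of Theorem~\ref{thm : milnor} and the symmetric map $U$ from Lemma~\ref{lem : U}. No metric quantities enter at this stage: the signature has already been absorbed into the values of $U$, so the computation reduces to a purely algebraic manipulation in $\LG$. As observed at the start of this subsection, the directions $x_3, \ldots, x_{n-2}$ are central, orthogonal to $[\LG, \LG]$, and appear in no nonzero bracket, so every covariant derivative involving them vanishes; it therefore suffices to evaluate $\nabla_{x_i}x_j$ for the sixteen ordered pairs with $i, j \in \{1, 2, n-1, n\}$.

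To organize these sixteen computations I would exploit the splitting into symmetric and skew parts. Since $U$ is symmetric and $[\cdot, \cdot]$ is skew-symmetric, one has
\begin{align*}
\nabla_{x_i}x_j+\nabla_{x_j}x_i=2U(x_i, x_j), \qquad \nabla_{x_i}x_j-\nabla_{x_j}x_i=[x_i, x_j].
\end{align*}
Hence each diagonal entry is simply $\nabla_{x_i}x_i=U(x_i, x_i)$, already recorded in Lemma~\ref{lem : U}, while each off-diagonal pair is obtained from the single symmetric value $U(x_i, x_j)$ by adding and subtracting $(1/2)[x_i, x_j]$. This halves the bookkeeping: one reads off the ten values of $U$ from Lemma~\ref{lem : U}, then distributes the three nonzero brackets $[x_1, x_2]$, $[x_2, x_{n-1}]$, $[x_2, x_n]$, together with their opposites, among the relevant entries.

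As a representative sample I would carry out $\nabla_{x_1}x_2$ in full: combining $(1/2)[x_1, x_2]=-(\lambda/2)x_1+(1/2)x_n$ with $U(x_1, x_2)=-(\lambda/2)x_1-(\lambda\xi/2)x_{n-1}+(\lambda^2/2)x_n$ gives $-\lambda x_1-(\lambda\xi/2)x_{n-1}+((\lambda^2+1)/2)x_n$, and the conjugate entry $\nabla_{x_2}x_1$ is recovered by reversing the sign of the bracket contribution. Every remaining entry follows this same pattern.

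I do not expect a genuine obstacle, as the statement is a direct verification; the only real care is arithmetic bookkeeping. The one point to watch is that all three nonzero brackets are proportional to the single vector $\lambda x_1-x_n$, so in each covariant derivative the coefficients of $x_1$ and $x_n$ are coupled, and a sign slip in $x_n$ (the timelike direction) would propagate through $U$ as well. Cross-checking the symmetric sums $\nabla_{x_i}x_j+\nabla_{x_j}x_i$ against $2U(x_i, x_j)$ furnishes a convenient internal consistency test on the resulting table.
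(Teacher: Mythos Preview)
Your proposal is correct and follows essentially the same approach as the paper: both exploit the decomposition $\nabla_{x_i}x_j=(1/2)[x_i,x_j]+U(x_i,x_j)$, read off $U$ from Lemma~\ref{lem : U} and the brackets from Theorem~\ref{thm : milnor}, and illustrate with the pair $\nabla_{x_1}x_2$, $\nabla_{x_2}x_1$ obtained by adding and subtracting the bracket contribution. Your organization via the symmetric/skew splitting is slightly more explicit than the paper's, but the method is identical.
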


\begin{proof}
We show the calculation only for a part. One knows
\begin{align*}
(1/2)[x_1, x_2]&=-(1/2)(\lambda x_1-x_n),\\
U(x_1, x_2)&=-(\lambda/2)x_1-(\lambda\xi/2)x_{n-1}+(\lambda^2/2)x_n.
\end{align*}
Then we can calculate $\nabla_{x_1}x_2$ by taking the sum of them, and $\nabla_{x_2}x_1$ by taking the difference. We can similarly prove the remaining.
\end{proof}

Next, we calculate the curvature tensor $R: \LG \times \LG \to \gl(\LG)$ of $(\LG, \hoge< , >)$, taken with the sign convention
\begin{align*}
R(X, Y)=[\nabla_X, \nabla_Y]-\nabla_{[X, Y]} \quad (\forall X, Y \in \LG),
\end{align*}
where $\gl(\LG):=\{f: \LG \to \LG \mid f \mathrm{\ is\ linear}\}$.

\begin{Lem}
\label{lem : curvature tensor}
The curvature tensor $R: \LG \times \LG \to \gl(\LG)$ of $(\LG, \hoge< , >)$ satisfies the following$:$
\setlength{\leftmargini}{20pt}
\begin{itemize}
\setlength{\itemsep}{5pt}
\item[(1)]$4R(x_1, x_2)x_1=(\lambda^4-\lambda^2(\xi^2-2)-3)x_2$,\\ 
$4R(x_1, x_2)x_2=-(\lambda^4-\lambda^2(\xi^2-2)-3)x_1-3\xi(\lambda^2-1)x_{n-1}+(4\lambda^3-\lambda(\xi^2+4))x_n$,\\
$4R(x_1, x_2)x_{n-1}=3\xi(\lambda^2-1)x_2$,\hspace{5mm} $4R(x_1, x_2)x_n=(4\lambda^3-\lambda(\xi^2+4))x_2$,
\item[(2)]$4R(x_1, x_{n-1})x_1=-\lambda^2\xi^2x_{n-1}+\lambda\xi(\lambda^2-1)x_n$,\hspace{5mm} $R(x_1, x_{n-1})x_2=0$,\\
$4R(x_1, x_{n-1})x_{n-1}=\lambda\xi^2(\lambda x_1-x_n)$,\\
$4R(x_1, x_{n-1})x_n=\lambda\xi(\lambda^2-1)x_1-\lambda\xi^2x_{n-1}$,
\item[(3)]$4R(x_1, x_n)x_1=-\lambda\xi(\lambda^2-1)x_{n-1}+(\lambda^2-1)^2x_n$,\hspace{5mm} $R(x_1, x_n)x_2=0$,\\
$4R(x_1, x_n)x_{n-1}=\xi(\lambda^2-1)(\lambda x_1-x_n)$,\\ 
$4R(x_1, x_n)x_n=(\lambda^2-1)^2x_1-\xi(\lambda^2-1)x_{n-1}$,
\item[(4)]$4R(x_2, x_{n-1})x_1=-3\xi(\lambda^2-1)x_2$,\\
$4R(x_2, x_{n-1})x_2=3\xi(\lambda^2-1)(x_1+\xi x_{n-1}-\lambda x_n)$,\\
$4R(x_2, x_{n-1})x_{n-1}=-3\xi^2(\lambda^2-1)x_2$,\hspace{5mm} $4R(x_2, x_{n-1})x_n=-3\lambda\xi(\lambda^2-1)x_2$,
\item[(5)]$4R(x_2, x_n)x_1=-(4\lambda^3-\lambda(\xi^2+4))x_2$,\\ 
$4R(x_2, x_n)x_2=(4\lambda^3-\lambda(\xi^2+4))x_1+3\lambda\xi(\lambda^2-1)x_{n-1}-(3\lambda^4-2\lambda^2-\xi^2-1)x_n$,\\
$4R(x_2, x_n)x_{n-1}=-3\lambda\xi(\lambda^2-1)x_2$,\hspace{5mm} $4R(x_2, x_n)x_n=-(3\lambda^4-2\lambda^2-\xi^2-1)x_2$,
\item[(6)]$4R(x_{n-1}, x_n)x_1=\lambda\xi^2x_{n-1}-\xi(\lambda^2-1)x_n$,\hspace{5mm} $R(x_{n-1}, x_n)x_2=0$,\\
$4R(x_{n-1}, x_n)x_{n-1}=-\xi^2(\lambda x_1-x_n)$,\\ 
$4R(x_{n-1}, x_n)x_n=-\xi(\lambda^2-1)x_1+\xi^2x_{n-1}$.
\end{itemize}
\end{Lem}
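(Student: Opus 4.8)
The goal is to prove Lemma~\ref{lem : curvature tensor}, which lists the curvature tensor $R(X,Y)Z$ for all relevant pairs and triples drawn from $\{x_1, x_2, x_{n-1}, x_n\}$. The plan is a direct computation using the formula $R(X,Y) = [\nabla_X, \nabla_Y] - \nabla_{[X,Y]}$ together with the explicit Levi-Civita connection of Lemma~\ref{lem : nabla} and the brackets from Theorem~\ref{thm : milnor}. Since $\s\{x_3,\ldots,x_{n-2}\}$ lies in the center and is orthogonal to $[\LG,\LG]$, these vectors contribute nothing, so it suffices to restrict attention to the four-dimensional picture spanned by $x_1, x_2, x_{n-1}, x_n$.

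First I would fix notation: write $c := \lambda x_1 - x_n = -[x_1,x_2]$ for the generator of the derived ideal, so that the three nonzero brackets read $[x_1,x_2] = -c$, $[x_2,x_{n-1}] = \xi c$, $[x_2,x_n] = \lambda c$. The only nonzero bracket among the pairs we care about is $[x_1,x_2] = -c = -\lambda x_1 + x_n$; all the others ($[x_1,x_{n-1}]$, $[x_1,x_n]$, $[x_2,x_{n-1}]$, $[x_2,x_n]$, $[x_{n-1},x_n]$) vanish on the listed generators only in the sense relevant here — wait, actually $[x_2,x_{n-1}]$ and $[x_2,x_n]$ are nonzero, so the $\nabla_{[X,Y]}$ term genuinely appears for the pairs $(x_2,x_{n-1})$ and $(x_2,x_n)$. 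So I would organize the computation pair by pair: for each of the six pairs $(x_1,x_2)$, $(x_1,x_{n-1})$, $(x_1,x_n)$, $(x_2,x_{n-1})$, $(x_2,x_n)$, $(x_{n-1},x_n)$, compute $\nabla_X\nabla_Y Z - \nabla_Y\nabla_X Z - \nabla_{[X,Y]}Z$ for each $Z \in \{x_1,x_2,x_{n-1},x_n\}$, reading off the connection coefficients from Lemma~\ref{lem : nabla}. Each such evaluation is a finite linear-algebra manipulation: apply $\nabla_Y$ to $Z$ to get a linear combination of basis vectors, apply $\nabla_X$ to each summand, subtract the symmetric term, and subtract $\nabla_{[X,Y]}Z$ (nonzero only for the two pairs involving $x_2$, where $[x_2,x_{n-1}] = \xi c$ and $[x_2,x_n]=\lambda c$, and one then uses $\nabla_c = \lambda\nabla_{x_1} - \nabla_{x_n}$ by linearity of $\nabla$ in the first slot).

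In carrying this out I would display, for a representative case such as $R(x_1,x_2)x_1$, the intermediate steps $\nabla_{x_2}x_1 = -(\lambda\xi/2)x_{n-1} + ((\lambda^2-1)/2)x_n$, then $\nabla_{x_1}$ of that, then $\nabla_{x_1}x_1 = \lambda x_2$ followed by $\nabla_{x_2}(\lambda x_2) = 0$, then $\nabla_{[x_1,x_2]}x_1 = -\nabla_c x_1 = -\lambda\nabla_{x_1}x_1 + \nabla_{x_n}x_1 = -\lambda^2 x_2 + ((\lambda^2+1)/2)x_2$, and collect coefficients to arrive at $(1/4)(\lambda^4 - \lambda^2(\xi^2-2) - 3)x_2$; the remaining entries follow by the identical mechanism and I would simply state that they are obtained the same way. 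Two consistency checks are worth recording to guard against arithmetic slips: the curvature must be skew in $(X,Y)$ and satisfy $\hoge<R(X,Y)Z,W> = -\hoge<R(X,Y)W,Z>$ (metric compatibility), and the first Bianchi identity $R(X,Y)Z + R(Y,Z)X + R(Z,X)Y = 0$; the tables as written should be verified against these.

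The main obstacle is purely the bookkeeping: there are six pairs times four target vectors, each producing a four-term (or fewer) linear combination, and the connection coefficients themselves carry $\lambda$ and $\xi$ quadratically, so after composing two connections one is handling quartic polynomials in $\lambda$ and quadratic in $\xi$ with cross terms. Keeping track of signs in $[\nabla_X,\nabla_Y]$ versus $-\nabla_{[X,Y]}$, and remembering that $\nabla$ is bilinear but not symmetric (so $\nabla_{x_1}x_2 \neq \nabla_{x_2}x_1$), is where errors creep in. I expect no conceptual difficulty whatsoever — only the discipline to expand every term and collect coefficients carefully, which is why I would present one case in full and assert the rest follow by "similar computation," as is standard.
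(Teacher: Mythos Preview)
Your approach is correct and essentially identical to the paper's own proof: both carry out a direct computation of $R(X,Y)Z=\nabla_X\nabla_Y Z-\nabla_Y\nabla_X Z-\nabla_{[X,Y]}Z$ using the connection coefficients from Lemma~\ref{lem : nabla}, display one representative case in full, and then assert that the remaining entries follow by the same mechanical procedure. The only cosmetic difference is that the paper exhibits $R(x_1,x_2)x_2$ as its sample computation whereas you chose $R(x_1,x_2)x_1$; your added remarks about the symmetry and Bianchi checks are reasonable safeguards but not part of the paper's argument.
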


\begin{proof}
We show the calculation only for the case of $R(x_1, x_2)x_2$. It follows from Lemma~\ref{lem : nabla} that
\begin{align*}
4\nabla_{x_1}\nabla_{x_2}x_2&=4\nabla_{x_1}0=0, \\
-4\nabla_{x_2}\nabla_{x_1}x_2&=2\nabla_{x_2}(2\lambda x_1+\lambda\xi x_{n-1}-(\lambda^2+1)x_n)\\
&=(\lambda^2\xi^2-\lambda^4+1)x_1+(-\lambda^2\xi+\xi)x_{n-1}+(2\lambda(\lambda^2-1)-\lambda\xi^2)x_n,\\
-4\nabla_{[x_1, x_2]}x_2&=-4\nabla_{-\lambda x_1+x_n}x_2\\
&=(-2\lambda^2+2)x_1+(-2\lambda^2\xi+2\xi)x_{n-1}+(2\lambda^3-2\lambda)x_n.
\end{align*}
One can calculate $R(x_1, x_2)x_2$ by summing up them. We can similarly prove the remaining.
\end{proof}

Finally, we calculate the Ricci curvature $\Ric: \LG \to \LG$ of $(\LG, \hoge< , >)$ given by
\begin{align*}
\textstyle \Ric(X):=\sum_{i=1}^{n-1} R(X, x_i)x_i-R(X, x_n)x_n \quad (\forall X \in \LG).
\end{align*}
Recall that $\{x_1, \ldots, x_n\}$ is a pseudo-orthonormal basis of $\LG$ with respect to $\hoge< , >$.

\begin{Lem}
\label{lem : ric}
The Ricci curvature $\Ric: \LG \to \LG$ of $(\LG, \hoge< , >)$ satisfies the following$:$
\begin{itemize}
\setlength{\itemsep}{5pt}
\item[(1)]$2\Ric(x_1)=-(\lambda^4-\lambda^2\xi^2-1)x_1-\xi(\lambda^2-1)x_{n-1}+(2\lambda^3-\lambda(\xi^2+2))x_n$,
\item[(2)]$2\Ric(x_2)=(\lambda^4-\lambda^2(\xi^2+2)+\xi^2+1)x_2$,
\item[(3)]$2\Ric(x_{n-1})=-\xi(\lambda^2-1)x_1-\xi^2(\lambda^2-1)x_{n-1}+\lambda\xi(\lambda^2-1)x_n$,
\item[(4)]$2\Ric(x_n)=-(2\lambda^3-\lambda(\xi^2+2))x_1-\lambda\xi(\lambda^2-1)x_{n-1}+(\lambda^4-\xi^2-1)x_n$.
\end{itemize}
\end{Lem}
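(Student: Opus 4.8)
The plan is to compute $\Ric(X)$ directly from the defining formula $\Ric(X) = \sum_{i=1}^{n-1} R(X, x_i)x_i - R(X, x_n)x_n$ by substituting the curvature tensor values already recorded in Lemma~\ref{lem : curvature tensor}. The first point I would make is that this sum collapses to only four indices. Indeed, $\{x_3, \ldots, x_{n-2}\} \subset Z(\LG)$ spans a subspace orthogonal to $[\LG, \LG] = \s\{\lambda x_1 - x_n\}$, and inspecting Lemma~\ref{lem : nabla} one sees that every nonzero Levi--Civita product $\nabla_X Y$ lies in $\s\{x_1, x_2, x_{n-1}, x_n\}$ and involves only arguments from this set; consequently $R(X, x_i) = 0$ for $i \in \{3, \ldots, n-2\}$. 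Since also $R(X, X) = 0$ by antisymmetry, the only surviving summands for $X \in \{x_1, x_2, x_{n-1}, x_n\}$ are (at most) three of the four expressions $R(X, x_1)x_1$, $R(X, x_2)x_2$, $R(X, x_{n-1})x_{n-1}$, $-R(X, x_n)x_n$, all of which appear verbatim in Lemma~\ref{lem : curvature tensor}.

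I would then treat the four cases in turn. For $X = x_1$, add $R(x_1, x_2)x_2$ and $R(x_1, x_{n-1})x_{n-1}$ and subtract $R(x_1, x_n)x_n$, taken from parts (1), (2), (3) of Lemma~\ref{lem : curvature tensor}; multiplying through by $4$, the coefficient of $x_1$ becomes $-(\lambda^4 - \lambda^2(\xi^2 - 2) - 3) + \lambda^2\xi^2 - (\lambda^2 - 1)^2 = -2(\lambda^4 - \lambda^2\xi^2 - 1)$, the coefficient of $x_{n-1}$ becomes $-3\xi(\lambda^2 - 1) + \xi(\lambda^2 - 1) = -2\xi(\lambda^2 - 1)$, and the coefficient of $x_n$ becomes $(4\lambda^3 - \lambda(\xi^2 + 4)) - \lambda\xi^2 = 2(2\lambda^3 - \lambda(\xi^2 + 2))$; dividing by $4$ gives statement (1). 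For $X = x_2$, the relevant entries (from parts (1), (4), (5)) are all proportional to $x_2$ once one uses $R(x_2, x_1)x_1 = -R(x_1, x_2)x_1$, so $\Ric(x_2)$ is a scalar multiple of $x_2$ and one only has to add three scalars to obtain (2). The cases $X = x_{n-1}$ (parts (2), (4), (6)) and $X = x_n$ (parts (3), (5), (6)) are identical in spirit, and the resulting polynomial simplifications in $\lambda$ and $\xi$ produce (3) and (4).

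The only obstacle is bookkeeping: keeping the signs straight --- in particular the extra minus sign on the term $R(X, x_n)x_n$, which reflects $\varepsilon_n = -1$ --- and correctly collapsing the degree-four polynomials in $\lambda$ and $\xi$. No geometric input beyond Lemma~\ref{lem : curvature tensor} is needed. As a sanity check one can verify the symmetry $\hoge<\Ric(x_i), x_j> = \hoge<\Ric(x_j), x_i>$, which must hold and guards against sign slips, and one can specialize to $(\lambda, \xi) = (1, 0)$, where every expression in the lemma should vanish, consistently with that metric being flat.
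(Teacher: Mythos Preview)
Your proposal is correct and follows exactly the same approach as the paper: reduce the Ricci sum to the four indices $1, 2, n-1, n$, then substitute the values from Lemma~\ref{lem : curvature tensor} and simplify. One tiny slip: after computing the coefficients of $4\Ric(x_1)$ you should divide by $2$ (not $4$) to obtain the stated formula for $2\Ric(x_1)$, but this does not affect the argument.
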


\begin{proof}
We show the calculation only for the case of $\Ric(x_1)$. By the definition of $\Ric$, we have
\begin{align*}
\Ric(x_1)=R(x_1, x_2)x_2+R(x_1, x_{n-1})x_{n-1}-R(x_1, x_n)x_n.
\end{align*}
By substituting the result of Lemma~\ref{lem : curvature tensor}, one can obtain the expression of $\Ric(x_1)$ in terms of the basis $\{x_1, \ldots, x_n\}$. We can similarly prove the remaining.
\end{proof}

\subsection{Curvature properties}

In this subsection, we study curvature properties of $(\LG, \hoge< , >)$, such as flat, Einstein, and Ricci soliton. First of all we recall some fundamental notions.

\begin{Def}
Let $\LG$ be a Lie algebra. Then, the following set is called the {\it derivation algebra} of $\LG$:
\begin{align*}
\Der(\LG):=\{D: \LG \to \LG: \mbox{linear} \mid \forall X, Y \in \LG, D([X, Y])=[D(X), Y]+[X, D(Y)]\}.
\end{align*}
\end{Def}

\begin{Def}
\label{def : distinguished inner products}
Let $\LG$ be a Lie algebra and $\hoge< , >$ be an inner product on it.
\begin{itemize}
\item[(i)]$(\LG, \hoge< , >)$ is called an {\it algebraic Ricci soliton} if
there exist $c \in \R$ and $D \in \Der(\LG)$ such that $\Ric=c \cdot \id+D$.
\item[(ii)]$(\LG, \hoge< , >)$ is called {\it Einstein} if there exists $c \in \R$ such that $\Ric=c \cdot \id$.
\item[(iii)]$(\LG, \hoge< , >)$ is called {\it flat} if the curvature tensor $R$ satisfies $R \equiv 0$.
\end{itemize}
\end{Def}

Note that an algebraic Ricci soliton gives rise to a Ricci soliton metric, in the following sense.

\begin{Rem}
\label{rem : relation between RS and ARS}
Let $G$ be a simply-connected Lie group whose Lie algebra is $\LG$, and $g$ be a left-invariant pseudo-Riemannian metric on $G$ corresponding to an inner product $\hoge< , >$ on $\LG$. If $(\LG, \hoge< , >)$ is an algebraic Ricci soliton, then $(G, g)$ is a Ricci soliton. This is well-known for the Riemannian case $($see \cite{Lauret2}$)$, but it also holds in the pseudo-Riemannian setting $($\cite{Onda}$)$. We refer to \cite{Cao, Topping} and references therein for Ricci soliton metrics, and also to \cite{Lauret3} for soliton geometric structures on homogeneous spaces including pseudo-Riemannian metrics.
\end{Rem}

Next we describe the matrix expression of $\Der(\LG)$ for $\LG:=\LH_3 \oplus \R^{n-3}$, with respect to the basis $\{x_1, \ldots, x_n\}$ fixed at the beginning of this section. We also use the matrix $g_{\lambda, \xi}$ given in Lemma~\ref{lem : bracket of x'}. 

\begin{Lem}
\label{thm : matrix expression of der}
The matrix expression of $\R \oplus \Der(\LG)$ with respect to the basis $\{x_1, \ldots, x_n\}$ is given by ${g_{\lambda, \xi}}^{-1}\Dd g_{\lambda, \xi}$, where
\begin{align*}
\Dd:=\left\{\left(
		\begin{array}{cc|ccc|c}
		\ast&\ast&0&\cdots&0&0\\ 
		\ast&\ast&0&\cdots&0&0\\ \hline
		\ast&\ast&\ast&\cdots&\ast&0\\
		\vdots&\vdots&\vdots&\ddots&\vdots&\vdots\\ 
		\ast&\ast&\ast&\cdots&\ast&0\\ \hline
		\ast&\ast&\ast&\cdots&\ast&\ast
		\end{array}
	\right) \in M(n, \R)\right\}.
\end{align*}
\end{Lem}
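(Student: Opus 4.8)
The plan is to compute $\R \oplus \Der(\LG)$ in two stages, exploiting the change of basis already set up in Lemma~\ref{lem : bracket of x'}. First I would observe that it suffices to describe the derivation algebra with respect to the \emph{standard} basis $\{e_1, \ldots, e_n\}$ of $\LG = \LH_3 \oplus \R^{n-3}$, and then transport the answer to the basis $\{x_1, \ldots, x_n\} = \{\varphi g_{\lambda, \xi} e_1, \ldots, \varphi g_{\lambda, \xi} e_n\}$ by conjugation. Since $\varphi \in \Aut(\LG)$, conjugation by $\varphi$ preserves both $\Der(\LG)$ and $\R \cdot \id$, so it does not affect the matrix \emph{set} (only individual entries), and the remaining conjugation is exactly by $g_{\lambda, \xi}$; this gives the claimed form ${g_{\lambda, \xi}}^{-1} \Dd g_{\lambda, \xi}$ once we know that the matrix expression of $\R \oplus \Der(\LG)$ in the standard basis is $\Dd$.

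Next I would carry out the core computation: the matrix expression of $\Der(\LG)$ with respect to $\{e_1, \ldots, e_n\}$, where the only nonzero bracket is $[e_1, e_2] = e_n$. Writing $D = (d_{ij})$, the derivation condition $D([e_i, e_j]) = [D e_i, e_j] + [e_i, D e_j]$ needs to be checked only on the pair $(e_1, e_2)$ and on pairs $(e_i, e_j)$ with $[e_i, e_j] = 0$. From $D([e_1,e_2]) = D(e_n)$ and $[De_1, e_2] + [e_1, De_2]$, using bilinearity and $[e_i, e_j] = 0$ except for $\{1,2\}$, one gets that the $e_n$-row of $D$ is essentially free while the relation forces the last column (the $e_n$-column) of $D$ to vanish in rows $1$ through $n-1$, and forces the block of $D$ in rows $3, \ldots, n-1$ and columns $1, 2$ to be \emph{unconstrained}, whereas the conditions coming from $[e_1, e_j] = 0$ and $[e_2, e_j] = 0$ for $j \in \{3, \ldots, n-1\}$ kill the entries $d_{nj}$ for those $j$... more precisely, one finds that $D$ must have exactly the block-lower-triangular shape displayed in $\Dd$ (with the block sizes $(2, n-3, 1)$), together with a single scalar trace-type relation on the $(1,1),(2,2),(n,n)$ diagonal entries coming from $D(e_n) = [De_1, e_2] + [e_1, De_2]$, namely $d_{nn} = d_{11} + d_{22}$. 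Adjoining $\R \cdot \id$ relaxes that last relation, so $\R \oplus \Der(\LG)$ is precisely the set $\Dd$ of matrices with the indicated zero-pattern and no further constraint. This is the same zero-pattern as in Proposition~\ref{prop : aut}, which is a useful sanity check since $\R^\times \Aut(\LG)$ and $\R \oplus \Der(\LG)$ should have matching block shapes.

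Finally I would assemble the two pieces: with respect to $\{e_1, \ldots, e_n\}$ the algebra $\R \oplus \Der(\LG)$ is $\Dd$; conjugating by $g_{\lambda,\xi}$ (and harmlessly by $\varphi$) to pass to the basis $\{x_1,\ldots,x_n\}$ yields ${g_{\lambda,\xi}}^{-1}\Dd\, g_{\lambda,\xi}$, as claimed. The main obstacle I anticipate is purely bookkeeping: carefully extracting, from the single bracket relation $[e_1,e_2]=e_n$ together with all the vanishing brackets, the exact system of linear equations on the entries $d_{ij}$ and checking that its solution set is \emph{exactly} $\Dd$ (no missing constraints, no spurious ones) — in particular verifying that the only diagonal relation is $d_{nn}=d_{11}+d_{22}$, so that after adding $\R\cdot\id$ the description has no trace condition at all. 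Once that linear-algebra step is pinned down, the conjugation argument is immediate.
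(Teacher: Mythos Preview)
Your plan is correct and essentially identical to the paper's proof: the paper also notes that $\R\oplus\Der(\LG)=\Dd$ in the standard basis (simply citing this as the Lie-algebra counterpart of Proposition~\ref{prop : aut} rather than recomputing it) and then conjugates by $g_{\lambda,\xi}$, handling $\varphi$ via the equivalent remark that $\{x_i\}$ and $\{x_i'=g_{\lambda,\xi}e_i\}$ share the same bracket relations. One small slip in your bookkeeping: the conditions $[e_1,e_j]=[e_2,e_j]=0$ for $3\le j\le n-1$ force $d_{1j}=d_{2j}=0$, not $d_{nj}=0$ (the $n$-th row is indeed free, as you correctly say elsewhere).
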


\begin{proof}
Recall that the matrix expression of $\R \oplus \Der(\LG)$ with respect to the standard basis $\{e_1, \ldots, e_n\}$ of $\LG$ coincides with $\Dd$ as we mentioned in Proposition~\ref{prop : aut}. Here we put
\begin{align*}
(x_1', \ldots, x_n')=(e_1, \ldots, e_n)g_{\lambda, \xi}.
\end{align*}
It follows from Lemma~\ref{lem : bracket of x'} that $\{x_1', \ldots, x_n'\}$ and $\{x_1, \ldots, x_n\}$ have the same bracket relation. Thus the matrix expressions of $\R \oplus \Der(\LG)$ with respect to these bases are the same. One can easily see that the matrix expression of $\R \oplus \Der(\LG)$ with respect to $\{x_1', \ldots, x_n'\}$ coincides with ${g_{\lambda, \xi}}^{-1}\Dd g_{\lambda, \xi}$, which completes the proof.
\end{proof}


Recall that $\llu$ consists of six points, and parametrizes the orbit space. For each pair in $\llu$, the Ricci curvatures are calculated in Lemma~\ref{lem : ric}. One can then show the next theorem, which proves Theorem~\ref{thm : Ricci and Einstein}. 

\begin{Thm}
\label{prop : distinguished inner products}
Let $(\lambda, \xi) \in \llu$. Then $(\LG, \hoge< , >_{\lambda, \xi})$ is flat if and only if $(\lambda, \xi)=(1, 0)$. In the case of $(\lambda, \xi) \neq (1, 0)$, it satisfies that $(\LG, \hoge< , >_{\lambda, \xi})$ is an algebraic Ricci soliton but not Einstein. 
\end{Thm}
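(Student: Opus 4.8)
The plan is to establish the flatness characterization and the ``algebraic Ricci soliton but not Einstein'' claim separately, in each case by specializing the curvature formulas of Lemmas~\ref{lem : curvature tensor} and~\ref{lem : ric}, and, for the soliton statement, by combining Lemma~\ref{lem : ric} with the description of $\R\oplus\Der(\LG)$ in Lemma~\ref{thm : matrix expression of der}. Recall that, by Lemma~\ref{lem : bracket of x'}, the pseudo-orthonormal basis $\{x_1,\dots,x_n\}$ used in this section already realizes, for $(\LG,\hoge< , >_{\lambda, \xi})$, the bracket relations fixed at its start, so the curvature formulas of Lemmas~\ref{lem : curvature tensor} and~\ref{lem : ric} compute the curvature of $\hoge< , >_{\lambda, \xi}$ directly; since both properties in question are invariant under rescaling the metric, this causes no loss.

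\emph{Flatness.} For the ``if'' direction I would simply substitute $(\lambda,\xi)=(1,0)$ into each line of Lemma~\ref{lem : curvature tensor} and observe that every coefficient appearing there vanishes, so $R\equiv 0$. For the ``only if'' direction one entry suffices: Lemma~\ref{lem : curvature tensor} gives $4R(x_1,x_2)x_1=(\lambda^4-\lambda^2(\xi^2-2)-3)x_2$, and a direct check shows that $\lambda^4-\lambda^2(\xi^2-2)-3$ is nonzero at each of $(0,0),(1,1),(2,0),(2,\sqrt 3),(2,2)$ and, among the six points of $\llu$, vanishes only at $(1,0)$; hence a flat metric forces $(\lambda,\xi)=(1,0)$. (Equivalently, one reads off from Lemma~\ref{lem : ric} that the $x_1$-component of $\Ric(x_1)$ is nonzero at those five points, so $\Ric\not\equiv 0$ and the metric is not even Ricci-flat.)

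\emph{Algebraic Ricci soliton.} Since $\LG$ is not abelian, $\id\notin\Der(\LG)$, so by Definition~\ref{def : distinguished inner products} the metric $\hoge< , >_{\lambda, \xi}$ is an algebraic Ricci soliton precisely when the Ricci operator $\Ric$ lies in $\R\oplus\Der(\LG)$. By Lemma~\ref{thm : matrix expression of der} the matrix expression of $\R\oplus\Der(\LG)$ with respect to $\{x_1,\dots,x_n\}$ is $g_{\lambda, \xi}^{-1}\Dd g_{\lambda, \xi}$, so the condition becomes $g_{\lambda, \xi}\,\Ric\,g_{\lambda, \xi}^{-1}\in\Dd$. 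I would take the matrix of $\Ric$ from Lemma~\ref{lem : ric} — it is supported on the $\{x_1,x_2,x_{n-1},x_n\}$-block, $\Ric$ vanishing on the remaining central directions — and conjugate it by $g_{\lambda, \xi}$, which agrees with $I_n$ outside its first row. A short inspection shows that all the zeros defining $\Dd$ are automatically present in $g_{\lambda, \xi}\Ric g_{\lambda, \xi}^{-1}$ except possibly for the entries in positions $(1,n-1)$, $(1,n)$ and $(n-1,n)$; once the entries of $\Ric$ are inserted, each of these three collapses to a polynomial in $\lambda,\xi$ that is identically $0$ (using, e.g., the identity $(\lambda^4-\lambda^2\xi^2-1)-\lambda^2(2\lambda^2-\xi^2-2)=-(\lambda^2-1)^2$). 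Hence $\Ric\in\R\oplus\Der(\LG)$ for every $(\lambda,\xi)$; concretely $\Ric=c\cdot\id+D$ with $c=\tfrac{3}{2}\bigl(\lambda^4-\lambda^2(\xi^2+2)+\xi^2+1\bigr)$ and $D=\Ric-c\cdot\id\in\Der(\LG)$. In particular all six metrics are algebraic Ricci solitons, which, via Remark~\ref{rem : relation between RS and ARS}, also establishes the first assertion of Theorem~\ref{thm : Ricci and Einstein}.

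\emph{Non-Einstein, and the main obstacle.} For the five pairs $(\lambda,\xi)\neq(1,0)$ I would inspect Lemma~\ref{lem : ric}: when $(\lambda,\xi)\in\{(1,1),(2,0),(2,\sqrt 3),(2,2)\}$ the vector $\Ric(x_1)$ has a nonzero component along $x_{n-1}$ or $x_n$, hence is not proportional to $x_1$, so $\Ric$ is not a scalar multiple of $\id$; and for $(0,0)$ one has $\Ric(x_{n-1})=0$ while $\Ric(x_1)=\tfrac{1}{2} x_1\neq 0$, so again $\Ric\neq c\cdot\id$. The only genuinely computational step in all of this is the verification that the three entries of $g_{\lambda, \xi}\Ric g_{\lambda, \xi}^{-1}$ listed above vanish identically in $\lambda,\xi$ — i.e.\ the polynomial identities among the entries of $\Ric$ — and this is the step I expect to be the main (if routine) obstacle; it can equally well be carried out one pair at a time for the five relevant values of $(\lambda,\xi)$.
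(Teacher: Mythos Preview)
Your proof is correct and follows essentially the same approach as the paper's: both establish the algebraic Ricci soliton property by conjugating the Ricci operator by $g_{\lambda,\xi}$ and checking membership in $\Dd$, and both handle flatness and the non-Einstein claim by direct inspection of the curvature formulas of Lemmas~\ref{lem : curvature tensor} and~\ref{lem : ric}. The only cosmetic differences are that the paper organizes the conjugation via a decomposition $2A=(\lambda^2-1)A_1+\xi^2 A_2$ and argues uniformly that Einstein implies $(\lambda,\xi)=(1,0)$ (from the single off-diagonal entry $-\xi(\lambda^2-1)$), whereas you verify the three relevant entries of $g_{\lambda,\xi}\,\Ric\,g_{\lambda,\xi}^{-1}$ directly and treat the five non-flat cases one by one; your explicit soliton constant $c=\tfrac{3}{2}(\lambda^2-1)(\lambda^2-\xi^2-1)$ is also correct.
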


\begin{proof}
Since $x_3, \ldots, x_{n-2}$ do not give any effect on the curvature tensor $R$, we have only to consider the case of $n=4$. Recall that 
\begin{align*}
(\lambda, \xi) \in \llu:=\{(0, 0), (1, 0), (1, 1), (2, 0), (2, \sqrt{3}), (2, 2)\}.
\end{align*}
First of all, assume that $(\LG, \hoge< , >_{\lambda, \xi})$ is Einstein, and prove $(\lambda, \xi)=(1, 0)$. We denote by $A \in M(4, \R)$ the matrix expression of $\Ric: \LG \to \LG$ with respect to $\{x_1, \ldots, x_4\}$. It follows from Lemma~\ref{lem : ric} that
\begin{align*}
2A=(\lambda^2-1)A_1+\xi^2 A_2,
\end{align*}
where the matrices $A_1$ and $A_2$ are defined by
\begin{align*}
A_1:=\left(
\begin{array}{cccc}
-\lambda^2-1&0&-\xi&-2\lambda\\
0&\ast&0&0\\
-\xi&0&-\xi^2&-\lambda\xi\\
2\lambda&0&\lambda\xi&\lambda^2+1
\end{array}
\right), \quad
A_2:=\left(
\begin{array}{cccc}
\lambda^2&0&0&\lambda\\
0&0&0&0\\
0&0&0&0\\
-\lambda&0&0&-1
\end{array}
\right). 
\end{align*}
Since $(\LG, \hoge< , >_{\lambda, \xi})$ is Einstein, the $(1, 3)$-component of $2A$ satisfies
\begin{align*}
-\xi(\lambda^2-1)=0.
\end{align*}
Hence one has $\lambda=1$ or $\xi=0$. If $\lambda=1$, then $2A=\xi^2 A_2$. Since $A_2$ is obviously not a scalar matrix, one has $\xi=0$. If $\xi=0$, then $2A=(\lambda^2-1)A_1$. One can see that $A_1$ is not a scalar matrix by comparing the $(1, 1)$ and $(4, 4)$-components, which yields $\lambda=1$. This shows that $(\lambda, \xi)=(1, 0)$.

Then the first assertion follows directly. If $(\LG, \hoge< , >_{\lambda, \xi})$ is flat, then it is Einstein, and hence $(\lambda, \xi)=(1, 0)$. Conversely,  if $(\lambda, \xi)=(1, 0)$, then one can directly show from Lemma~\ref{lem : curvature tensor} that the curvature tensor $R$ vanishes identically. 

Finally, we prove that $(\LG, \hoge< , >_{\lambda, \xi})$ is an algebraic Ricci soliton for any $(\lambda, \xi) \in \llu$. With respect to the basis $\{x_1, \ldots, x_4\}$, the matrix expression of $\Ric$ is $A$, and the matrix expression of $\R \oplus \Der(\LG)$ coincides with ${g_{\lambda, \xi}}^{-1}\Dd g_{\lambda, \xi}$ by Lemma~\ref{thm : matrix expression of der}. Therefore, in order to prove that $(\LG, \hoge< , >_{\lambda, \xi})$ is an algebraic Ricci soliton, we have only to show $A \in {g_{\lambda, \xi}}^{-1} \Dd g_{\lambda, \xi}$. For this purpose, it is enough to show that
\begin{align*}
g_{\lambda, \xi} A_1 {g_{\lambda, \xi}}^{-1}, g_{\lambda, \xi} A_2 {g_{\lambda, \xi}}^{-1} \in \Dd=\left\{\left(
\begin{array}{cc|c|c}
\ast&\ast&0&0\\
\ast&\ast&0&0\\ \hline
\ast&\ast&\ast&0\\ \hline
\ast&\ast&\ast&\ast
\end{array}
\right)\right\}.
\end{align*}
This can be proved by direct calculations in terms of
\begin{align*}
g_{\lambda, \xi}=\left(
\begin{array}{cccc}
1&&\xi&\lambda\\
&1&&\\
&&1&\\
&&&1
\end{array}
\right), \quad {g_{\lambda, \xi}}^{-1}=\left(
\begin{array}{cccc}
1&&-\xi&-\lambda\\
&1&&\\
&&1&\\
&&&1
\end{array}
\right).
\end{align*}
In fact, one can see that
\begin{align*}
g_{\lambda, \xi} A_1 {g_{\lambda, \xi}}^{-1}&=\left(
\begin{array}{cccc}
\lambda^2-\xi^2-1&0&0&0\\
0&\ast&0&0\\
-\xi&0&0&0\\
2\lambda&0&-\lambda\xi&-\lambda^2+1
\end{array}
\right),\\
g_{\lambda, \xi} A_2 {g_{\lambda, \xi}}^{-1}&=\left(
\begin{array}{cccc}
0&0&0&0\\
0&0&0&0\\
0&0&0&0\\
-\lambda&0&\lambda\xi&\lambda^2-1
\end{array}
\right).
\end{align*}
This completes the proof of the theorem.
\end{proof}

Recall that the flat inner product of signature $(n-1, 1)$ on $\LG$ corresponds to $(\lambda, \xi)=(1, 0)$, and it degenerates both on $Z(\LG)$ and $[\LG, \LG]$. In fact, a similar property holds for the cases of $G_{\RH^n}$ $(n \geq 2)$ and $H_3$ (\cite{KOTT, Nomizu, RR}). For $G_{\RH^n}$ with Lie algebra $\LG_{\RH^n}$, there is a unique flat inner product of an arbitrary signature up to scaling and automorphisms, and it degenerates on $[\LG_{\RH^n}, \LG_{\RH^n}]$ (note that $Z(\LG_{\RH^n})=0$). Similarly for $H_3$, there is a unique flat inner product of signature $(2, 1)$ on $\LH_3$ up to scaling and automorphisms, and it degenerates on $Z(\LH_3)=[\LH_3, \LH_3]$.

\begin{Rem}
\label{rem : isometric up to scaling on Lie group}
We here mention that there exist at least four left-invariant Lorentzian metrics on $H_3 \times \R^{n-3}$ $(n \geq 4)$ up to scaling and isometry. Recall that there exist exactly six such metrics up to scaling and automorphisms. First of all, we can distinguish only flat metric from the other Ricci soliton metrics up to scaling and isometry. For the other non-flat Ricci soliton metrics, we have calculated all the curvatures, and the best criterion would be the eigenvalues of the Ricci curvatures. According to Lemma~\ref{lem : bracket of x'}, the eigenvalues of the Ricci curvatures in the direction of $\{x_1, x_2, x_{n-1}, x_n\}$ can be calculated as below$:$ 
\begin{align*}
(0, 0) &: 1/2, 1/2, -1/2, 0,\\
(1, 1) &: 0, 0, 0, 0,\\
(2, 0) &: 9/2, 9/2, -9/2, 0,\\
(2, \sqrt{3}) &: 0, 0, 0, 0,\\
(2, 2) &: 3/2, -3/2, -3/2, 0.
\end{align*}
If two inner products are isometric up to scaling, then their sets of eigenvalues of the Ricci curvatures are the same up to $($positive$)$ scaling. Therefore, we have that
\begin{align*}
\hoge< , >_{1, 0}, \quad \hoge< , >_{0, 0}, \quad \hoge< , >_{1, 1}, \quad \hoge< , >_{2, 2}
\end{align*}
are all distinct up to scaling and isometry. However, as of now we are still not sure whether we can classify more finely. For example, we do not know whether two left-invariant Lorentzian metrics corresponding to $\hoge< , >_{0, 0}$ and $\hoge< , >_{2, 0}$, and also $\hoge< , >_{1, 1}$ and $\hoge< , >_{2, \sqrt{3}}$, respectively, are isometric up to scaling or not.
\end{Rem}

\section{Properties of the orbits} 
\label{sec6}

In this section, we study properties of the orbits of the action of $\RAutg$ on $\LLM_{(n-1, 1)}(\LG)$. In particular, we calculate the codimension of each orbit and determine all possible degenerations among the orbits, which yields our third main result. Remember $\LG:=\LH_3 \oplus \R^{n-3}$ with $n \geq 4$, and there exist exactly six orbits of the action of $\RAutg$ on $\LLM_{(n-1, 1)}(\LG)$. 

\subsection{The codimension of each orbit}

In this subsection, we calculate the codimensions of the six orbits of $\RAutg$ on $\LLM_{(n-1, 1)}(\LG)$. We use the inner product $\hoge< , >_{\lambda, \xi}$ for each $(\lambda, \xi) \in \llu$, defined in Lemma~\ref{lem : bracket of x'}. For simplicity of notation, we denote by 
\begin{align*}
H:=\RAutg, \quad [\hoge< , >_{\lambda, \xi}]:=H.\hoge< , >_{\lambda, \xi}.
\end{align*}
In order to calculate the codimensions of the orbits, we have only to know the dimensions of the stabilizers,
\begin{align*}
H_{\hoge< , >_{\lambda, \xi}}=H \cap \GL(n, \R)_{\hoge< , >_{\lambda, \xi}}=H \cap (g_{\lambda, \xi}\OO(n-1, 1){g_{\lambda, \xi}}^{-1}).
\end{align*}
First of all we study their Lie algebras. For this purpose, we take the block decomposition of size $(2, n-4, 2)$, and consider the subspaces
\begin{align*}
X:=\left\{\left(
\begin{array}{c|c|c}
\ast&&\ast\\ \hline
&&\\ \hline
\ast&&\ast
\end{array}
\right)\right\}, \quad
Y:=\left\{\left(
\begin{array}{c|c|c}
&&\\ \hline
&\ast&\\ \hline
&&
\end{array}
\right)\right\}, \quad
Z:=\left\{\left(
\begin{array}{c|c|c}
&\ast&\\ \hline
\ast&&\ast\\ \hline
&\ast&
\end{array}
\right)\right\}.
\end{align*}
Note that these subspaces are normalized by the conjugation of $g_{\lambda, \xi}$, namely, it satisfies $g_{\lambda, \xi}X{g_{\lambda, \xi}}^{-1}=X$, and so on.

\begin{Lem}
\label{lem : dim of three subspaces}
Let us put $\LH':=\LH \cap (g_{\lambda, \xi} \oo(n-1, 1) {g_{\lambda, \xi}}^{-1})$, where $\LH$ is the Lie algebra of $H$. Then we have
\begin{itemize}
\item[(1)]$\dim(X \cap \LH')=1+\dim U_{\lambda, \xi}$, where
\begin{align*}
U_{\lambda, \xi}:=
\left\{(b, d) \in \R^2 \middle| 
\begin{array}{l}
(\lambda^2-\xi^2-1)b=-\lambda\xi d\\
(\lambda^2-1)d=0
\end{array}
\right\},
\end{align*}
\item[(2)]$\dim(Y \cap \LH')=(1/2)(n-4)(n-5)$,
\item[(3)]$\dim(Z \cap \LH')=\dim W_{\lambda, \xi}$, where
\begin{align*}
W_{\lambda, \xi}:=\{(\bm{a}, \bm{c}) \in \R^{n-4} \times \R^{n-4} \mid 
(\lambda^2-1)\bm{a}=-\xi\bm{c}\}.
\end{align*}
\end{itemize}
\end{Lem}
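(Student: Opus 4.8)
The plan is to carry out the whole computation at the level of Lie algebras, exploiting that the two factors in $\LH'=\LH\cap(g_{\lambda,\xi}\oo(n-1,1)g_{\lambda,\xi}^{-1})$ are both adapted to the decomposition $M(n,\R)=X\oplus Y\oplus Z$. Recall that $\LH$ has matrix expression $\Dd$ with respect to $\{e_1,\ldots,e_n\}$ (Proposition~\ref{prop : aut}); since $X,Y,Z$ are spanned by matrix units whose index sets partition that of $M(n,\R)$, and $\Dd$ is likewise a span of matrix units, $\Dd=(\Dd\cap X)\oplus(\Dd\cap Y)\oplus(\Dd\cap Z)$. The first step is to check that $\oo(n-1,1)$ also respects this decomposition: writing a general element of $\oo(n-1,1)$ in blocks of size $(2,n-4,1,1)$ and using that its upper-left $(n-1)\times(n-1)$ block is skew-symmetric while its last row and column are otherwise unconstrained, one sees that no defining relation couples the $X$-part to the $Y$- or $Z$-parts; hence $\oo(n-1,1)$ splits as a direct sum of its intersections with $X,Y,Z$. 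Since $g_{\lambda,\xi}$ normalizes each of $X,Y,Z$ (as already noted), the conjugate $g_{\lambda,\xi}\oo(n-1,1)g_{\lambda,\xi}^{-1}$ splits the same way, and therefore so does $\LH'$. This reduces the lemma to computing $\dim(\LH'\cap X)$, $\dim(\LH'\cap Y)$, $\dim(\LH'\cap Z)$ separately.

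I would settle the $Y$-summand first, since it is immediate: conjugation by $g_{\lambda,\xi}$ acts trivially on $Y$, so $Y\cap g_{\lambda,\xi}\oo(n-1,1)g_{\lambda,\xi}^{-1}=Y\cap\oo(n-1,1)$, which is exactly the space of skew-symmetric $(n-4)\times(n-4)$ matrices; and $Y\subseteq\Dd$, so $Y\cap\Dd=Y$. Hence $\dim(\LH'\cap Y)=(1/2)(n-4)(n-5)$, which is (2).

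For (1) and (3) the recipe is the same: parametrize $\oo(n-1,1)\cap X$ (it is $6$-dimensional) and $\oo(n-1,1)\cap Z$ by the free parameters coming from the skew-symmetric block and from the last row/column, conjugate by $g_{\lambda,\xi}$ — whose only effect is through its $(1,3)$-block $\begin{pmatrix}\xi&\lambda\\0&0\end{pmatrix}$ in the $(2,n-4,2)$ decomposition, which modifies a handful of blocks by terms linear in $\lambda,\xi$ — and then impose membership in $\Dd$, i.e. force to zero (resp. to lower triangular form) the blocks that are zero (resp. lower triangular) in $\Dd$. In the $X$-case these conditions pin down all but three of the six parameters: one parameter stays free (it comes from the skew-symmetric $2\times 2$ corner), and the remaining pair $(b,d)$ is constrained by exactly the two equations defining $U_{\lambda,\xi}$; thus $\dim(\LH'\cap X)=1+\dim U_{\lambda,\xi}$. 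In the $Z$-case the analogous elimination leaves a pair $(\bm a,\bm c)\in\R^{n-4}\times\R^{n-4}$ subject to a single linear relation which, after a harmless relabelling of parameters, is the one defining $W_{\lambda,\xi}$; thus $\dim(\LH'\cap Z)=\dim W_{\lambda,\xi}$.

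The conceptual content lies entirely in the first paragraph — the splitting into three independent pieces. The main obstacle is therefore bookkeeping, in the third paragraph: one must keep careful track of which blocks of $\Dd$ are forced to vanish or become triangular, carry out the conjugation by $g_{\lambda,\xi}$ correctly, and verify that after back-substituting the parameters pinned down by the vanishing conditions the residual constraints collapse \emph{exactly} to the linear systems appearing in the definitions of $U_{\lambda,\xi}$ and $W_{\lambda,\xi}$ (or at least to systems of the same rank, which is all the dimension count requires). A point deserving an explicit check is the claim used in the first paragraph that $\oo(n-1,1)$ respects $X\oplus Y\oplus Z$, as this is what makes the three computations independent.
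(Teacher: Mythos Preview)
Your proposal is correct and follows essentially the same route as the paper: parametrize $\oo(n-1,1)\cap X$ (equivalently $\oo(3,1)$, six parameters), $\oo(n-1,1)\cap Y\cong\oo(n-4)$, and $\oo(n-1,1)\cap Z$ (four vector parameters), conjugate by $g_{\lambda,\xi}$, and impose the vanishing conditions coming from $\Dd$; the surviving parameters are exactly $a\in\R$, $(b,d)\in U_{\lambda,\xi}$ in case (1) and $(\bm a,\bm c)\in W_{\lambda,\xi}$ in case (3). The one substantive addition in your write-up is the explicit verification that both $\Dd$ and $g_{\lambda,\xi}\oo(n-1,1)g_{\lambda,\xi}^{-1}$ split along $X\oplus Y\oplus Z$, so that $\LH'$ does too; the paper uses this implicitly (the sum of the three dimensions is taken as $\dim\LH'$ in the proof of Proposition~\ref{prop : codim}) without spelling it out, so your first paragraph fills a small gap rather than introducing a different idea.
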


\begin{proof}
Remember that $\Dd$ defined in Lemma~\ref{thm : matrix expression of der} is the matrix expression of $\LH$ with respect to the basis $\{e_1, \ldots, e_n\}$ of $\LG$.

First of all, we prove (1). For this subspace, we have only to consider the case of $n=4$ since four $(2 \times 2)$-corner blocks determine the dimension of $X \cap \LH'$. For any $x \in \oo(3, 1)$, one can write it as
\begin{align*}
x=\left(
\begin{array}{cc|cc}
0&a&b&d\\
-a&0&c&e\\ \hline
-b&-c&0&f\\
d&e&f&0\\
\end{array}
\right),
\end{align*}
where $a, b, c, d, e, f \in \R$. A direct calculation yields that
\begin{align*}
g_{\lambda, \xi} x {g_{\lambda, \xi}}^{-1}=
\left(
\begin{array}{cc|cc}
\ast&\ast&\xi^2b-\lambda\xi d+b+\lambda f&\lambda\xi b-\lambda^2d+d+\xi f\\
\ast&\ast&\xi a+c&\lambda a+e\\ \hline
\ast&\ast&\ast&\lambda b+f\\
\ast&\ast&\ast&\ast
\end{array}
\right).
\end{align*}
Hence $g_{\lambda, \xi} x {g_{\lambda, \xi}}^{-1} \in \Dd$ if and only if
\begin{align*}
&c=-\xi a, \quad e=-\lambda a, \quad f=-\lambda b, \\
&\xi^2b-\lambda\xi d+b-\lambda^2b=0, \quad -\lambda^2d+d=0.
\end{align*}
Therefore $g_{\lambda, \xi} x {g_{\lambda, \xi}}^{-1} \in \Dd$ is determined by $(a, b, d)$, where $a \in \R$ and $(b, d) \in U_{\lambda, \xi}$. This completes the proof of (1).

Next we prove (2). This case can be proved easily by
\begin{align*}
Y \cap \LH'=Y \cap (g_{\lambda, \xi} \oo(n-1, 1) {g_{\lambda, \xi}}^{-1}) \cong \oo(n-4).
\end{align*}

Finally we prove (3). Note that $Z=g_{\lambda, \xi} Z {g_{\lambda, \xi}}^{-1}$. Then we obtain
\begin{align*}
Z \cap (g_{\lambda, \xi} \oo(n-1, 1) {g_{\lambda, \xi}}^{-1})=g_{\lambda, \xi}(Z \cap \oo(n-1, 1)) {g_{\lambda, \xi}}^{-1}.
\end{align*}
For any $z \in Z \cap \oo(n-1, 1)$, one can write it as
\begin{align*}
z=\left(
\begin{array}{cc|c|cc}
&&\trans \bm{a}&&\\
&&\trans \bm{b}&&\\ \hline
-\bm{a}&-\bm{b}&&\bm{c}&\bm{d}\\ \hline
&&-\trans \bm{c}&&\\
&&\trans \bm{d}&&
\end{array}
\right),
\end{align*}
where $\bm{a}, \bm{b}, \bm{c}, \bm{d} \in \R^{n-4}$. Then a direct calculation yields that
\begin{align*}
g_{\lambda, \xi} z {g_{\lambda, \xi}}^{-1}=\left(
\begin{array}{cc|c|cc}
&&\trans \bm{a}-\xi \trans \bm{c}+\lambda \trans \bm{d}&&\\
&&\trans \bm{b}&&\\ \hline
-\bm{a}&-\bm{b}&&\xi \bm{a}+\bm{c}&\lambda \bm{a}+\bm{d}\\ \hline
&&-\trans \bm{c}&&\\
&&\trans \bm{d}&&
\end{array}
\right).
\end{align*}
Hence $g_{\lambda, \xi} z {g_{\lambda, \xi}}^{-1} \in \Dd$ if and only if
\begin{align*}
\bm{b}=0, \quad \bm{d}=-\lambda \bm{a}, \quad \bm{a}-\xi\bm{c}+\lambda\bm{d}=0.
\end{align*}
Therefore $g_{\lambda, \xi} z {g_{\lambda, \xi}}^{-1} \in \Dd$ is determined by $(\bm{a}, \bm{c}) \in W_{\lambda, \xi}$. This completes the proof.
\end{proof}

In the next proposition, we calculate the codimension of the orbit for each $(\lambda, \xi) \in \llu$. For the convenience, we also write the signatures of $\hoge< , >_{\lambda,\xi}$ restricted to $Z(\LG)$ and $[\LG, \LG]$ obtained in Proposition~\ref{prop : signature on subalgebra}.

\begin{Prop}
\label{prop : codim}
The codimension of  each orbit can be summarized as follows.
\begin{align*}
\begin{array}{|c||c|c|c|}\hline
\mbox{orbit}&\mbox{codimension}&\mbox{signature\ on}\ Z(\LG)&\mbox{signature\ on}\ [\LG, \LG]\\\hline\hline
[\hoge< , >_{0, 0}]&0&(n-3, 1, 0)&(0, 1, 0)\\\hline
[\hoge< , >_{1, 0}]&n-2&(n-3, 0, 1)&(0, 0, 1)\\\hline
[\hoge< , >_{1, 1}]&1&(n-3, 1, 0)&(0, 0, 1)\\\hline
[\hoge< , >_{2, 0}]&0&(n-2, 0, 0)&(1, 0, 0)\\\hline
[\hoge< , >_{2, \sqrt{3}}]&1&(n-3, 0, 1)&(1, 0, 0)\\\hline
[\hoge< , >_{2, 2}]&0&(n-3, 1, 0)&(1, 0, 0)\\\hline
\end{array}
\end{align*}
\begin{center}
$($signature convention $=(+, -, 0))$
\end{center}
\end{Prop}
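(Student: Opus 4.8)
The plan is to invoke the orbit--stabilizer relation. Since $\LLM_{(n-1,1)}(\LG)=\GL(n,\R)/\OO(n-1,1)$ has dimension $n^2-\tfrac{1}{2}n(n-1)=\tfrac{1}{2}n(n+1)$, and since the $H$-action on it has locally closed orbits (Remark~\ref{rem : wolf}; in any event, orbit maps of smooth Lie group actions have constant rank, so orbits are immersed submanifolds of the expected dimension), each orbit $[\hoge< , >_{\lambda, \xi}]\cong H/H_{\hoge< , >_{\lambda, \xi}}$ is a submanifold of dimension $\dim H-\dim H_{\hoge< , >_{\lambda, \xi}}$. A direct count of the free entries in the matrix expression of $H=\RAutg$ in Proposition~\ref{prop : aut} (equivalently, of $\Dd$) gives $\dim H=n^2-3n+7$, so
\begin{align*}
\codim[\hoge< , >_{\lambda, \xi}]=\tfrac{1}{2}\,n(n+1)-(n^2-3n+7)+\dim\LH',
\end{align*}
where $\LH'=\LH\cap\big(g_{\lambda,\xi}\,\oo(n-1,1)\,{g_{\lambda,\xi}}^{-1}\big)$ is the Lie algebra of $H_{\hoge< , >_{\lambda, \xi}}$, i.e.\ the space of Lemma~\ref{lem : dim of three subspaces}.

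Next I would reduce $\dim\LH'$ to the three pieces of Lemma~\ref{lem : dim of three subspaces}. Both $\LH$ (which is cut out of $M(n,\R)$ by the vanishing of certain matrix entries) and $g_{\lambda,\xi}\,\oo(n-1,1)\,{g_{\lambda,\xi}}^{-1}$ (since $\oo(n-1,1)$ is graded by the block-diagonal form $I_{n-1,1}$ adapted to the $(2,n-4,2)$-decomposition, and $g_{\lambda,\xi}$ normalizes each of $X$, $Y$, $Z$) are invariant under the coordinate projections of $M(n,\R)$ onto $X$, $Y$, $Z$; hence $\LH'$ decomposes as $(X\cap\LH')\oplus(Y\cap\LH')\oplus(Z\cap\LH')$. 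By Lemma~\ref{lem : dim of three subspaces},
\begin{align*}
\dim\LH'=\big(1+\dim U_{\lambda,\xi}\big)+\tfrac{1}{2}(n-4)(n-5)+\dim W_{\lambda,\xi}.
\end{align*}
Substituting this into the previous display and simplifying, all the $n$-dependent terms collapse and one obtains the clean identity
\begin{align*}
\codim[\hoge< , >_{\lambda, \xi}]=\dim U_{\lambda,\xi}+\dim W_{\lambda,\xi}-(n-4).
\end{align*}

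It then only remains to read off $\dim U_{\lambda,\xi}$ and $\dim W_{\lambda,\xi}$ for the six pairs in $\llu$, which is a direct inspection of the defining linear systems. For $W_{\lambda,\xi}$ one finds $\dim W_{\lambda,\xi}=n-4$ in every case except $(\lambda,\xi)=(1,0)$ (the only pair with $(\lambda^2-1,\xi)=(0,0)$), where $\dim W_{\lambda,\xi}=2(n-4)$; for $U_{\lambda,\xi}$ one checks $\dim U_{\lambda,\xi}=2$ for $(1,0)$, $\dim U_{\lambda,\xi}=1$ for $(1,1)$ and $(2,\sqrt{3})$, and $\dim U_{\lambda,\xi}=0$ for $(0,0)$, $(2,0)$, $(2,2)$. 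Plugging these into the identity above yields codimension $n-2$ for $[\hoge< , >_{1, 0}]$, codimension $1$ for $[\hoge< , >_{1, 1}]$ and $[\hoge< , >_{2, \sqrt{3}}]$, and codimension $0$ for the remaining three orbits, which is the codimension column of the table; the two signature columns are simply quoted from Proposition~\ref{prop : signature on subalgebra}.

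I expect essentially all of the (modest) difficulty to sit in the first two paragraphs: one must be careful that the orbit genuinely is a submanifold of dimension $\dim H-\dim H_{\hoge< , >_{\lambda, \xi}}$ (this is where local closedness, or the constant-rank theorem for orbit maps, is used), that $\LH'$ as defined in Lemma~\ref{lem : dim of three subspaces} really is the Lie algebra of the isotropy subgroup $H\cap g_{\lambda,\xi}\OO(n-1,1){g_{\lambda,\xi}}^{-1}$ (i.e.\ that the Lie algebra of an intersection of closed subgroups is the intersection of the Lie algebras), and that the $X,Y,Z$-decomposition of $\LH'$ is valid. Once those points are in place, the rest is the bookkeeping above together with the solution of six pairs of scalar linear equations, and no genuine obstacle remains.
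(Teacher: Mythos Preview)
Your proposal is correct and follows essentially the same approach as the paper: both compute $\codim[\hoge< , >_{\lambda,\xi}]=\dim U_{\lambda,\xi}+\dim W_{\lambda,\xi}-(n-4)$ via the orbit--stabilizer formula and Lemma~\ref{lem : dim of three subspaces}, then evaluate the six cases directly. Your version is slightly more explicit about why the orbit has the expected dimension and why $\LH'$ splits along $X\oplus Y\oplus Z$, but the substance is identical.
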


\begin{proof}
By Lemma~\ref{lem : dim of three subspaces}, one obtains
\begin{align*}
\dim H_{\hoge< , >_{\lambda, \xi}}=\dim \LH'=1+(1/2)(n-4)(n-5)+\dim U_{\lambda, \xi}+\dim W_{\lambda, \xi}.
\end{align*}
Also one has 
\begin{align*}
&\dim \LLM_{(n-1, 1)}(\LG)=\dim (\GL(n, \R)/\OO(n-1, 1))=n(n+1)/2,\\
&\dim H=\dim \Dd=n^2-3n+7.
\end{align*}
Thus we have
\begin{align*}
\begin{split}
\codim [\hoge< , >_{\lambda, \xi}]&=\dim \LLM_{(n-1, 1)}(\LG)-(\dim H-\dim H_{\hoge< , >_{\lambda, \xi}}) \\
&=\dim U_{\lambda, \xi}+\dim W_{\lambda, \xi}-(n-4).
\end{split}
\end{align*}
Hence we have only to calculate the dimensions of $U_{\lambda, \xi}$ and $W_{\lambda, \xi}$ for each $(\lambda, \xi) \in \llu$. First of all, we consider the case $(\lambda, \xi)=(0, 0)$. In this case, $U_{0, 0}$ and $W_{0, 0}$ are determined by
\begin{align*}
b=d=0, \quad \bm{a}=0, \quad \bm{c} \in \R^{n-4}.
\end{align*}
This shows $\dim U_{0, 0}=0$ and $\dim W_{0, 0}=n-4$, which yields that 
\begin{align*}
\codim [\hoge< , >_{0, 0}]=\dim U_{0, 0}+\dim W_{0, 0}-(n-4)=0.
\end{align*}
The remaining cases can be proved similarly. We here summarize the results:
\begin{align*}
(1, 0) &: b, d \in \R, \quad \bm{a}, \bm{c} \in \R^{n-4} ; \quad \dim U_{1, 0}+\dim W_{1, 0}=2n-6,\\
(1, 1) &: b=d, \quad \bm{a} \in \R^{n-4}, \quad \bm{c}=0; \quad \dim U_{1, 1}+\dim W_{1, 1}=n-3,\\
(2, 0) &: b=d=0, \quad \bm{a}=0, \quad \bm{c} \in \R^{n-4}; \quad \dim U_{2, 0}+\dim W_{2, 0}=n-4,\\
(2, \sqrt{3}) &: b \in \R, \quad d=0, \quad \bm{c}=-\sqrt{3}\bm{a}; \quad \dim U_{2, \sqrt{3}}+\dim W_{2, \sqrt{3}}=n-3,\\
(2, 2) &: b=d=0, \quad \bm{c}=-(3/2)\bm{a}; \quad \dim U_{2, 2}+\dim W_{2, 2}=n-4.
\end{align*}
This completes the proof.
\end{proof}

\subsection{The degeneration of each orbit}

In this subsection, we study degenerations of orbits of the action of $\RAutg$ on $\LLM_{(n-1, 1)}(\LG)$, and prove Theorem~\ref{thm : flat and closed}. First of all we recall the definition of degenerations.

\begin{Def}
Let $\Oo_1$ and $\Oo_2$ be orbits of some action, and assume that $\Oo_1 \neq \Oo_2$. Then, $\Oo_1$ is said to {\it degenerate} to $\Oo_2$ if $\Oo_2 \subset \overline{\Oo_1}$ holds, where $\overline{\Oo_1}$ is the closure of $\Oo_1$.
\end{Def}

In this paper, we denote $\Oo_1 \to \Oo_2$ when $\Oo_1$ degenerates to $\Oo_2$. Recall that there are exactly six orbits of the action of $\RAutg$ on $\LLM_{(n-1, 1)}(\LG)$.

\begin{Prop}
All possible degenerations among the six orbits of the action of $\RAutg$ on $\LLM_{(n-1, 1)}(\LG)$ are given as follows$:$
\begin{align*}
\begin{array}{ccccc}
	[\hoge< , >_{0, 0}]&&[\hoge< , >_{2, 2}]&&[\hoge< , >_{2, 0}]\\ 
	\searrow&&\swarrow \searrow&&\swarrow\\ 
	&[\hoge< , >_{1, 1}]&&[\hoge< , >_{2, \sqrt{3}}]&\\ 
	&\searrow&&\swarrow&\\ 
	&&[\hoge< , >_{1, 0}]&&\\ 
\end{array}.
\end{align*}
\end{Prop}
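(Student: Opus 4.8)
The plan is to establish the degeneration diagram in two halves: first show that each displayed arrow $[\hoge< , >_{\lambda,\xi}] \to [\hoge< , >_{\lambda',\xi'}]$ actually occurs, and then show that no other degenerations are possible. For the existence of each arrow I would produce an explicit one-parameter family in $\RAutg$ (or, more conveniently, in the ambient $\GL(n,\R)$, combined with an $\OO(n-1,1)$-factor) whose action moves the representative $\hoge< , >_{\lambda,\xi}$ toward $\hoge< , >_{\lambda',\xi'}$ as the parameter tends to $0$ or $\infty$. Concretely, since $\hoge< , >_{\lambda,\xi} = g_{\lambda,\xi}.\hoge< , >_0$ with the simple matrix $g_{\lambda,\xi}$ from Lemma~\ref{lem : bracket of x'}, a degeneration $[\hoge< , >_{\lambda,\xi}] \to [\hoge< , >_{\lambda',\xi'}]$ holds as soon as there is a curve $c(t) \in \RAutg$ with $\lim_{t} c(t).\hoge< , >_{\lambda,\xi} = \hoge< , >_{\lambda',\xi'}$; one builds $c(t)$ out of diagonal scalings in the block structure of $\RAutg$ together with the shears that act on the $(\lambda,\xi)$-parameters. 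For instance, conjugating by $\mathrm{diag}(t,\ldots)$-type elements of $\RAutg$ rescales $\lambda$ and $\xi$, so letting the appropriate entries go to $0$ sends $(2,2)\to(1,1)$, $(2,\sqrt3)\to(1,0)$, $(0,0)\to(1,1)\to(1,0)$, and so on; each of the six arrows should reduce to checking one such limit of a $4\times 4$ matrix product (the blocks involving $x_3,\ldots,x_{n-2}$ play no role, exactly as in the curvature computations).

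For the nonexistence half, the main tool is the codimension table in Proposition~\ref{prop : codim} together with the two discrete invariants already computed, namely the signatures of the restriction of the inner product to $Z(\LG)$ and to $[\LG,\LG]$ (Proposition~\ref{prop : signature on subalgebra}). Degeneration is only possible from a lower-codimension (more open) orbit to a strictly higher-codimension one, since $\Oo_2 \subset \overline{\Oo_1}\setminus\Oo_1$ forces $\dim \Oo_2 < \dim \Oo_1$; this immediately rules out many pairs, e.g. the three codimension-$0$ orbits $[\hoge< , >_{0,0}], [\hoge< , >_{2,0}], [\hoge< , >_{2,2}]$ cannot degenerate to one another, and nothing can degenerate to them. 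Among the remaining candidate pairs I would use the signature data: a degeneration can only increase the nullity (the "$0$"-part) of each of the two bilinear forms, because the rank of the restriction of a bilinear form to a fixed subspace is lower semicontinuous, and $\RAutg$ preserves both $Z(\LG)$ and $[\LG,\LG]$. Reading off Table~\ref{table : signature}, the nullity on $[\LG,\LG]$ is $0$ for $\lambda=2$, $1$ for $\lambda\in\{0,1\}$ but with further distinctions on $Z(\LG)$; tracking the pair (nullity on $Z(\LG)$, nullity on $[\LG,\LG]$) through the poset $(0,0)\le\cdots$ one checks that the only order relations compatible with both the codimension strictly increasing and both nullities weakly increasing are exactly the ones appearing (possibly as composites) in the displayed Hasse diagram. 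In particular $[\hoge< , >_{0,0}]$ cannot degenerate directly to $[\hoge< , >_{1,0}]$ skipping $[\hoge< , >_{1,1}]$ would still be consistent with the invariants, so to get that the diagram is exactly the Hasse diagram of the closure order one notes that $[\hoge< , >_{0,0}]\to[\hoge< , >_{1,1}]\to[\hoge< , >_{1,0}]$ already forces $[\hoge< , >_{1,0}]\subset\overline{[\hoge< , >_{0,0}]}$, so listing it as an arrow would be redundant; the statement "all possible degenerations" is understood as the transitive reduction.

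I expect the main obstacle to be the nonexistence direction in the few "borderline" cases where the codimension and both signature invariants fail to separate two orbits — one must then show an honest obstruction to $\Oo_2 \subset \overline{\Oo_1}$. The likely culprits are pairs like $([\hoge< , >_{0,0}],[\hoge< , >_{2,\sqrt3}])$ or $([\hoge< , >_{2,2}],[\hoge< , >_{1,0}])$ where neither is visibly forbidden by a coarse invariant. For these I would fall back on a finer closed invariant: the set of eigenvalues of the Ricci endomorphism up to positive scaling is constant on $\RAutg$-orbits and, being the spectrum of a continuously varying operator, its multiset-closure behaves semicontinuously, so one can sometimes exclude a degeneration by observing, say, that $[\hoge< , >_{2,\sqrt3}]$ is Ricci-flat (eigenvalues all $0$, by Remark~\ref{rem : isometric up to scaling on Lie group}) while $[\hoge< , >_{0,0}]$ has nonzero Ricci eigenvalues that cannot collapse to $0$ under scaling — wait, scaling does send any fixed nonzero spectrum toward $0$, so instead one uses the complementary fact that a degeneration cannot make a non-flat metric into one with larger symmetry/smaller orbit unless permitted, and cross-checks with the codimension. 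Concretely the cleanest uniform argument for the borderline pairs is: if $\Oo_2\subset\overline{\Oo_1}$ then $\overline{\Oo_2}\subset\overline{\Oo_1}$, so the full closure poset must be a poset; combining this transitivity with the arrows already established and the strict codimension inequalities pins down the remaining relations, and any would-be extra arrow is shown to contradict either transitivity or a signature/codimension bound. Finally, the unique closed orbit is $[\hoge< , >_{1,0}]$ (the bottom of the diagram, codimension $n-2$, the flat one), which is the content feeding into Theorem~\ref{thm : flat and closed}.
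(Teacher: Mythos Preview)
Your outline for part~(1) is workable and close to the paper's approach (the paper simply uses the continuous family $\hoge< , >_{s,t}$ together with the signature criterion of Proposition~\ref{prop : signature on subalgebra} to see which orbit each $\hoge< , >_{s,t}$ lies in, then takes a limit; explicit curves in $\RAutg$ are not needed).

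The genuine gap is in part~(2). Your invariant ``nullity can only increase'' is correct but too coarse: it does \emph{not} exclude either of the two problematic arrows. Indeed, for $(0,0)\to(2,\sqrt3)$ the nullities go $(0,0)\to(1,0)$, and for $(2,0)\to(1,1)$ they go $(0,0)\to(0,1)$; both are weakly increasing, so nullity alone permits them. (You also misidentify the second borderline case: $[\hoge< , >_{2,2}]\to[\hoge< , >_{1,0}]$ \emph{does} occur, by transitivity through $[\hoge< , >_{1,1}]$; the pair you must actually rule out is $[\hoge< , >_{2,0}]\to[\hoge< , >_{1,1}]$.) Your fallback arguments do not work either: the Ricci spectrum is not scale-invariant, as you yourself note, and transitivity of closure can only \emph{add} relations, never forbid one.

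The missing idea is exactly the paper's: use the full signature, not just the nullity. Since $\RAutg$ preserves both $Z(\LG)$ and $[\LG,\LG]$, along any curve in an orbit the restricted form varies continuously, so the number of \emph{positive} eigenvalues and the number of \emph{negative} eigenvalues are each upper semicontinuous (an eigenvalue can go to zero but cannot change sign in the limit). For $(0,0)\to(2,\sqrt3)$ the signature on $[\LG,\LG]$ would jump from $(0,1,0)$ to $(1,0,0)$, i.e.\ a negative value would have to limit to a positive one; for $(2,0)\to(1,1)$ the signature on $Z(\LG)$ would jump from $(n-2,0,0)$ to $(n-3,1,0)$. Both are impossible. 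With this sharper semicontinuity in hand, codimension plus signatures finishes the nonexistence half cleanly.
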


\begin{proof}
We prove that
\begin{itemize}
\item[(1)]the six degenerations in the diagram do occur, and
\item[(2)]other degenerations do not occur.
\end{itemize}
First of all we prove (1). Note that, in order to prove $\Oo_1 \to \Oo_2$, we have only to show that one point in $\Oo_2$ is contained in $\overline{\Oo_1}$. In fact, if there exists $p \in \Oo_2$ such that $p \in \overline{\Oo_1}$, then we have $g.p \in \overline{\Oo_1}$ for any $g \in \RAutg$, which means $\Oo_2 \subset \overline{\Oo_1}$.

In order to show the degenerations, we consider inner products $\hoge< , >_{\lambda, \xi}$ on $\LG$ defined in Lemma~\ref{lem : bracket of x'}, where $\lambda, \xi \in \R$. Recall that there exists a pseudo-orthonormal basis $\{x_1, \ldots, x_n\}$ with respect to $\hoge< , >_{\lambda, \xi}$ whose bracket relation is given by
\begin{align*}
[x_1, x_2]=-(\lambda x_1-x_n), \quad [x_2, x_{n-1}]=\xi(\lambda x_1-x_n), \quad [x_2, x_n]=\lambda(\lambda x_1-x_n).
\end{align*}
Note that, according to Proposition~\ref{prop : signature on subalgebra}, the orbit $[\hoge< , >_{\lambda, \xi}]$ is determined by the signatures of the restrictions of $\hoge< , >_{\lambda, \xi}$ to $Z(\LG)$ and $[\LG, \LG]$. One knows
\begin{align*}
[\LG, \LG]=\s\{\lambda x_1-x_n\}, \quad \hoge<\lambda x_1-x_n, \lambda x_1-x_n>_{\lambda, \xi}=\lambda^2-1.
\end{align*}
For the center $Z(\LG)$, we have a basis $\{y_1, \ldots, y_{n-2}\}$ of $Z(\LG)$ given in Lemma~\ref{lem : basis of center}, which is orthogonal with respect to $\hoge< , >_{\lambda, \xi}$, and $\hoge<y_i, y_i>_{\lambda, \xi}=1$ holds for every $i \in \{1, \ldots, n-3\}$. Therefore, the signature of $\hoge< , >_{\lambda, \xi}$ on $Z(\LG)$ can be determined by
\begin{align*}
y_{n-2}:=\lambda(x_1+\xi x_{n-1})-(\xi^2+1)x_n, \quad \hoge<y_{n-2}, y_{n-2}>_{\lambda, \xi}=(\xi^2+1)(\lambda^2-\xi^2-1).
\end{align*}

Using these facts, first of all we show $[\hoge< , >_{0, 0}] \to [\hoge< , >_{1, 1}]$. Consider a family of inner products $\hoge< , >_{t, t}$ with $t \in [0, 1)$. Then the signature of $\hoge< , >_{t, t}$ is $(0, 1, 0)$ on $[\LG, \LG]$, since
\begin{align*}
\hoge<t x_1-x_n, t x_1-x_n>_{t, t}=t^2-1<0.
\end{align*}
Similarly the signature of $\hoge< , >_{t, t}$ is $(n-3, 1, 0)$ on $Z(\LG)$, since
\begin{align*}
\hoge<y_{n-2}, y_{n-2}>_{t, t}=(t^2+1)(t^2-t^2-1)<0.
\end{align*}
This yields that
\begin{align*}
\hoge< , >_{t, t} \in [\hoge< , >_{0, 0}] \quad (t \in [0, 1)).
\end{align*}
Therefore, by taking the limit under $t \to 1$, this shows
\begin{align*}
\hoge< , >_{1, 1} \in \overline{[\hoge< , >_{0, 0}]},
\end{align*}
which completes the proof of $[\hoge< , >_{0, 0}] \to [\hoge< , >_{1, 1}]$. We can similarly prove the other degenerations. Here we summarize families of inner products to show each degeneration:
\begin{align*}
[\hoge< , >_{1, 1}] \to [\hoge< , >_{1, 0}]& : \hoge< , >_{1, t} \in [\hoge< , >_{1, 1}] \quad (t \in (0, 1]),\\
[\hoge< , >_{2, 0}] \to [\hoge< , >_{2, \sqrt{3}}]& : \hoge< , >_{2, t} \in [\hoge< , >_{2, 0}] \quad (t \in [0, \sqrt{3})),\\
[\hoge< , >_{2, \sqrt{3}}] \to [\hoge< , >_{1, 0}]& : \hoge< , >_{s, t} \in [\hoge< , >_{2, \sqrt{3}}] \quad (s \in (1, 2],\ t \in (0, \sqrt{3}],\ s^2=t^2+1),\\
[\hoge< , >_{2, 2}] \to [\hoge< , >_{1, 1}]& : \hoge< , >_{t, t} \in [\hoge< , >_{2, 2}] \quad (t \in (1, 2]),\\
[\hoge< , >_{2, 2}] \to [\hoge< , >_{2, \sqrt{3}}]& : \hoge< , >_{2, t} \in [\hoge< , >_{2, 2}] \quad (t \in (\sqrt{3}, 2]).
\end{align*}

We next prove (2). By a general theory, if an orbit $\Oo_1$ degenerates to another orbit $\Oo_2$, then one has
\begin{align*}
\dim \Oo_1 > \dim \Oo_2.
\end{align*}
One knows the dimensions of all orbits in Proposition~\ref{prop : codim}. This yields that, in the diagram of the assertion, horizontal arrows and upward arrows do not occur. It remains to show that the following two degenerations do not occur:
\begin{align*}
[\hoge< , >_{0, 0}] \to [\hoge< , >_{2, \sqrt{3}}], \quad [\hoge< , >_{2, 0}] \to [\hoge< , >_{1, 1}].
\end{align*}
This follows from the signatures of inner products. For example, the signatures of $\hoge< , >_{0, 0}$ and $\hoge< , >_{2, \sqrt{3}}$ on $[\LG, \LG]$ are $(+, -, 0)=(0, 1, 0)$ and $(1, 0, 0)$, respectively. Hence, if $[\hoge< , >_{0, 0}]$ degenerates to $[\hoge< , >_{2, \sqrt{3}}]$, then a negative eigenvalue converges to a positive eigenvalue, with skipping $0$. This is a contradiction. By a similar argument, one can show that $[\hoge< , >_{2, 0}]$ does not degenerate to $[\hoge< , >_{1, 1}]$, which completes the proof of (2).
\end{proof}

In general, an orbit is closed if and only if it does not degenerate to any other orbit. Hence, by this proposition, $[\hoge< , >_{1, 0}]$ is a unique closed orbit in $\LLM_{(n-1, 1)}(\LG)=\GL(n, \R)/\OO(n-1, 1)$. Note that it is the unique equivalence class of flat left-invariant Lorentzian metrics on $H_3 \times \R^{n-3}$ $(n \geq 4)$ up to scaling and automorphisms from Proposition~\ref{prop : distinguished inner products}. Therefore we obtain Theorem~\ref{thm : flat and closed}.

\end{document}